\newcommand{\R}{{\mathbb R}}
\newcommand{\ren}{\R^{N}}
\newcommand{\irn}{\int\limits_{\re^N}}
\newcommand{\N}{{\mathbb N}}
\newcommand{\Z}{{\mathbb Z}}
\renewcommand{\l }{\lambda }
\newcommand{\loc}{\operatorname{loc}}
\newcommand{\p}{\partial}
\renewcommand{\O}{\Omega }
\newcommand{\g}{\gamma }
\newcommand{\bra}{\langle}
\newcommand{\ket}{\rangle}
\newcommand{\iy}{\infty}
\newcommand{\SN}{{\mathbb S}^{N-1}}
\newcommand{\onn}{\text{  on   }}
\newcommand{\inn}{\text{  in   }}
\newcommand{\dyle}{\displaystyle}
\newcommand{\dint}{\dyle\int}
\newcommand{\weakly}{\rightharpoonup}
\newcommand{\e }{\varepsilon}
\newcommand{\dive }{\mathop{\rm div}}
\newcommand{\sgn}{\mathop{\rm sgn}}
\newcommand{\h}{{\mathbb H}}
\newcommand{\eps}{\varepsilon}
\def\eqdef{{\buildrel \rm def \over =}}
\def\eop{{\ \vrule height 7pt width 7pt depth 0pt}}
\newcommand{\D}{\mathbb{D}}
\newcommand{\Q}{\mathbb{Q}}
\newtheorem{theorem}{Theorem}
\newtheorem{remark}[theorem]{Remark}
\newtheorem{lemma}[theorem]{Lemma}
\newtheorem{proposition}[theorem]{Proposition}
\newtheorem{corollary}[theorem]{Corollary}
\DeclareMathOperator*{\supp}{\text{supp}}
\newcommand{\ph}{\varphi}
\newcommand{\into}{\int_{\Omega}}
\newcommand{\intr}{\iint_{\R^N\times\R^N}}
\renewcommand{\l}{\left}
\renewcommand{\r}{\right}
\def\abs#1{\left|{#1}\right|}
\numberwithin{theorem}{section}
\numberwithin{equation}{section}
\title{On the Fu\v{c}\'{i}k spectrum of the Logarithmic Laplacian}
\date{}
\author{Rakesh Arora$^{1}$\footnote{ R. Arora, \textit{E-mail address:}
  \texttt{rakesh.mat@iitbhu.ac.in}} \ and Tuhina Mukherjee$^{2} \footnote{T.~Mukherjee, \textit{E-mail address:} \texttt{tuhina@iitj.ac.in}}$, \\
       \small $^{1}$ Department of Mathematical Sciences, Indian Institute of Technology (IIT-BHU) Varanasi, Uttar Pradesh 221005, India\\
\small $^{2}$ Department of Mathematics, Indian Institute of Technology Jodhpur, Rajasthan 342030, India \\}
\newcommand{\Addresses}{{ additional braces for segregating \footnotesize
  \footnotesize
  R. Arora, \textit{E-mail address:}
  \texttt{rakesh.mat@iitbhu.ac.in}\\
  \medskip
  T.~Mukherjee, \textit{E-mail address:} \texttt{tuhina@iitj.ac.in}
}}
\providecommand{\keywords}[1]
{
  \small	
  \textbf{\textit{Keywords---}} #1
}
\begin{document}
\maketitle \vspace{-1.8\baselineskip}
\begin{abstract}
 In this paper, we investigate the Fu\v{c}\'{i}k spectrum $\Sigma_L$ associated with the logarithmic Laplacian. This spectrum is defined as the set of all pairs $(\alpha,\beta) \in \mathbb{R}^2$ for which the problem
\begin{equation*}
    \left\{\begin{aligned}
   L_\Delta u\: &= \alpha u^+-\beta u^- &&~~\text{in} ~~ \Omega, \\
      u&=0 &&~~\text{in} ~~\mathbb R^N\setminus \Omega,
    \end{aligned} \right.
\end{equation*}
admits a nontrivial solution $u$. Here, $\Omega \subset \mathbb{R}^N$ is a bounded domain with $C^{1,1}$ boundary, $u^\pm = \max\{\pm u,0\}$, and $u = u^+ - u^-$. We show that the lines $\lambda_1^L \times \mathbb{R}$ and $\mathbb{R} \times \lambda_1^L$, where $\lambda_1^L$ denotes the first eigenvalue of $L_\Delta$, lies in the spectrum $\Sigma_L$ and are isolated within the spectrum. Furthermore, we establish the existence of the first nontrivial curve in $\Sigma_L$ and analyze its qualitative properties, including Lipschitz continuity, strict monotonicity, and asymptotic behavior. In addition, we obtain a variational characterization of the second eigenvalue of the logarithmic Laplacian and show that all eigenfunctions corresponding to eigenvalues $\lambda > \lambda_1^L$ are sign-changing. Finally, we address a nonresonance problem with respect to the Fu\v{c}\'{i}k spectrum $\Sigma_L$, employing variational methods and carefully overcoming the difficulties arising from the contrasting features of the first eigenvalue $\lambda_1^L$.
\end{abstract}

\keywords {Logarithmic Laplacian, Fu\v{c}\'{i}k Spectrum, Eigenvalue problem, Nonresonance}\\

\textbf{Mathematics Subject Classification:} 35S15, 35R11, 35R09, 35A15.
 
\section{Introduction}\label{main-results}
In this work, we study the Fu\v{c}\'{i}k spectrum of the logarithmic Laplacian, which is defined as the set of all pairs $(\alpha, \beta) \in \mathbb{R}^2$ for which the problem 
\begin{equation*} \label{1}
    \left\{\begin{aligned}
   L_\Delta u\: &= \alpha u^+-\beta u^- &&~~\text{in} ~~ \Omega, \\
      u&=0 &&~~\text{in} ~~\mathbb R^N\setminus \Omega,
    \end{aligned} \right. \tag{$P_{\alpha, \beta}$}
\end{equation*}
admits a nontrivial solution $u.$ Here, $u^+ = \max\{u,0\}$, $u=u^+-u^-$, $\Omega$ is a bounded domain of $\mathbb R^N$ with $\partial \Omega \in C^{1,1}$ and the operator $L_\Delta$ denotes the logarithmic Laplacian, {\it i.e.,} the pseudo-differential operator with Fourier symbol $2 \ln |\xi|,$ which can also be seen as a first-order expansion of the fractional Laplacian (the pseudo-differential operator with Fourier symbol $|\xi|^{2s}$). In particular, for $u \in C_c^2(\mathbb{R}^N)$ and $x \in \mathbb{R}^N,$
\begin{equation}\label{first-order-exp}
    (-\Delta)^s u(x) = u(x) + s L_{\Delta} u(x) + o(s) \quad s \to 0^+ \quad \text{in} \ L^p(\mathbb{R}^N), \ 1 < p \leq \infty.
\end{equation}  
Recall that for $s \in (0,1)$, the fractional Laplacian $(-\Delta)^s$ can be written as a singular integral operator defined in the principal value sense
\[
\begin{split}
(-\Delta)^su(x) = c(N,s) \ \text{P.V.} \int_{\R^N} \frac{u(x)-u(y)}{\abs{x-y}^{N+2s}} ~dx,
\end{split}
\] \\
where $c(N,s) =2^{2s} \pi^\frac{-N}{2} s \frac{\Gamma\l(\frac{N+2s}{2}\r)}{\Gamma(1-s)}$ is a normalizing constant. In the same spirit, the operator $L_\Delta$ has the following integral representation (see \cite[Theorem 1.1]{ChenWeth2019})
\begin{equation}\label{def:log-lap-ope}
    L_\Delta u(x) = c_N \int_{\mathcal{B}_1(x)} \frac{u(x)-u(y)}{\abs{x-y}^N} ~dy -c_N\int_{\R^N\setminus\mathcal{B}_1(x)} \frac{u(y)}{\abs{x-y}^N} ~dy + \rho_Nu(x),
\end{equation}
where $\mathcal{B}_1(x) \subset \mathbb{R}^N$ denotes the Euclidean ball of radius $1$ centered at $x$ and
\[
c_N := \pi ^{\frac{-N}{2}}\Gamma(\frac{N}{2}), \quad \rho_N := 2\ln 2+ \psi(\frac{N}{2})-\gamma, \quad  \gamma := -\Gamma^{'}(1),\]
 $\gamma$ being the Euler-Mascheroni constant and $\psi := \frac{\Gamma^{'}}{\Gamma}$ the digamma function. The singular kernel in \eqref{def:log-lap-ope} is sometimes called of {\it zero-order}, because it is the limiting case of hyper singular integrals. These operators arise naturally both in mathematical models exhibiting borderline singular behavior and in a variety of applied contexts. Their study has produced a rich theoretical framework; see \cite{Arora-Giacomoni-Vaishnavi, AroGiaHajVai-2025, ChenWeth2019, HernandezLopezSaldana2024, Feulefack-Jarohs, Frank-Konig-Tang, Foghem} for
mathematical developments and \cite{Pellacci-Verzini, Sikic-Song-Vondracek, Sprekels-Valdinoci} for applications. The appropriate functional setting for studying the Dirichlet problem involving the logarithmic Laplacian
$L_\Delta$ was recently introduced by Foghem \cite{Foghem}. For $q \in [1,\infty]$, we denote by $L^q(\Omega)$
the usual Lebesgue space endowed with the norm
\[
\|u\|_{q}:=\left(\int_{\Omega} |u|^q \, dx\right)^{\frac{1}{q}}
\quad \text{for } 1 \le q < \infty,
\qquad
\|u\|_{\infty}:=\operatorname*{ess\,sup}_{\Omega} |u|.
\]
We introduce the kernels
\begin{equation}\label{kernels:op}
k, j : \mathbb{R}^N \setminus \{0\} \to \mathbb{R},
\qquad
k(z) = \frac{\mathbf{1}_{B_1}(z)}{|z|^N},
\quad
j(z) = \frac{\mathbf{1}_{\mathbb{R}^N \setminus B_1}(z)}{|z|^N}.
\end{equation}
For problem~\eqref{1}, the natural energy space is defined as (see \cite{ChenWeth2019})
\[
\begin{aligned}
\mathbb{H}(\Omega)
:= \big\{ u \in L^2(\Omega) :\;
& u = 0 \text{ in } \mathbb{R}^N \setminus \Omega, \ \text{and}~\iint_{\mathbb{R}^{2N}} |u(x)-u(y)|^2 k(x-y)\,dx\,dy < \infty
\big\}.
\end{aligned}
\]
The associated inner product and norm on $\mathbb{H}(\Omega)$ are given by
\[
\mathcal{E}(u,v)
:= \frac{c_N}{2}
\iint_{\mathbb{R}^{2N}} (u(x)-u(y))(v(x)-v(y)) k(x-y)\,dx\,dy,
\qquad
\|u\| := \mathcal{E}(u,u)^{1/2}.
\]
The quadratic form associated with the operator $L_\Delta$ is
\begin{equation}\label{quadratic-form}
\mathcal{E}_L(u,v)
= \mathcal{E}(u,v)
- c_N \iint_{\mathbb{R}^{2N}} u(x)v(y) j(x-y)\,dx\,dy
+ \rho_N \int_{\mathbb{R}^N} u v \, dx.
\end{equation}
It follows from \cite[Theorem~2.1]{CorreaDePablo2018} and
\cite[Corollary~2.3]{LaptevWeth2021} that the embedding
\begin{equation}\label{compact-embed}
\mathbb{H}(\Omega) \hookrightarrow L^2(\Omega)
\quad \text{is compact}.
\end{equation}
More recently, Arora, Giacomoni, and Vaishnavi \cite[Theorem~2.2]{Arora-Giacomoni-Vaishnavi} established optimal continuous embeddings of $\mathbb{H}(\Omega)$ into Orlicz-type spaces $L^\varphi(\Omega)$, where $\varphi(t) \approx t^2 \log (e+t)$ for $t \gg 1$, and compact embeddings into Orlicz spaces $L^\psi(\Omega)$ satisfying $\lim_{t \to \infty} \frac{\psi(t)}{\varphi(t)} = 0.$

The study of the Fu\v{c}\'{i}k spectrum has attracted sustained interest in nonlinear analysis and spectral theory over the past several decades. The pioneering work of Fu\v{c}\'{i}k \cite{Fucik1964} initiated the investigation of nonlinear eigenvalue problems involving asymmetric jumping nonlinearities, thereby introducing the notion of the Fu\v{c}\'{i}k spectrum. Since then, this concept has served as a fundamental framework for the analysis of a wide class of nonlinear boundary value problems.

Building on this foundation, Cuesta, de Figueiredo, and Gossez \cite{cuesta} extended the theory to the $p$-Laplacian operator, providing a detailed study of the structure and qualitative properties of the Fu\v{c}\'{i}k spectrum in a nonlinear setting. Their work yielded significant insights into bifurcation phenomena and the multiplicity of solutions. For further developments concerning the Laplacian and the $p$-Laplacian, we refer the reader to \cite{gossez2, Perera-2002, Perera-2004} and the references therein.

In the context of nonlocal operators, Goyal and Sreenadh \cite{GoyalSreenadh2014} investigated the Fu\v{c}\'{i}k spectrum associated with the fractional Laplacian, extending several classical results from local operators to the nonlocal framework. Using variational and topological methods, they established the existence of a first nontrivial curve in the Fu\v{c}\'{i}k spectrum and analyzed its monotonicity and asymptotic behaviour. Earlier, Sreenadh \cite{Sreenadh2002} studied the Fu\v{c}\'{i}k spectrum of the Hardy--Sobolev operator, introducing weighted Sobolev spaces to handle nonlinear eigenvalue problems involving singular potentials.

More recently, Goel, Goyal, and Sreenadh \cite{GoelGoyalSreenadh2018} examined the Fu\v{c}\'{i}k spectrum of the $p$-fractional Laplacian subject to nonlocal normal derivative conditions. We also mention the recent work \cite{GoyalSreenadhSteklov2025}, which addresses the Fu\v{c}\'{i}k spectrum for mixed local-nonlocal operators, further enriching the theory and highlighting the growing interest in nonlocal and mixed frameworks.

The Fu\v{c}\'{i}k spectrum associated with problem \eqref{1} naturally extends the classical spectral theory of the logarithmic Laplacian. Indeed, when the parameters satisfy $\alpha=\beta$, problem \eqref{1} reduces to the standard eigenvalue problem
\[
\left\{
\begin{aligned}
L_\Delta u &= \lambda u && \text{in } \Omega, \\
u &= 0 && \text{in } \mathbb{R}^N \setminus \Omega.
\end{aligned}
\right.
\]

In \cite{ChenWeth2019}, Chen and Weth established the existence of an increasing and unbounded sequence of eigenvalues
\[
\lambda_1^L < \lambda_2^L \le \cdots \le \lambda_k^L \le \cdots,
\qquad
\lim_{k \to \infty} \lambda_k^L = +\infty.
\]

The first eigenvalue $\lambda_1^L$ is simple and admits eigenfunctions of constant sign; however, unlike the classical Laplacian and the fractional Laplacian, it may be negative. Moreover, $\lambda_1^L$ has a variational characterization,
\[
\lambda_1^L
= \inf_{u \in \mathbb{H}(\Omega)}
\bigl\{ \mathcal{E}_L(u,u) : \|u\|_2 = 1 \bigr\}.
\]

For every $k \in \mathbb{N}$, the point $(\lambda_k^L,\lambda_k^L)$ belongs to the Fu\v{c}\'{i}k spectrum $\Sigma_L$. Yet, beyond these diagonal points, the geometry of $\Sigma_L$ has remained unexplored. To the best of our knowledge, a systematic study of the Fu\v{c}\'{i}k spectrum associated with problem \eqref{1} has not been carried out so far, and uncovering its structure is precisely the objective of the present work.

Although our work is motivated by the methodologies developed in \cite{cuesta, GoyalSreenadh2014} for the local $p$-Laplacian and the nonlocal fractional Laplacian, respectively, the intrinsically nonlocal nature of the operator $L_\Delta$, together with the contrasting properties of its first eigenvalue $\lambda_1^L$, gives rise to significant analytical challenges that prevent a straightforward extension of the techniques employed in those works. The main challenges and contributions of this paper are summarized as follows. 

\begin{itemize}
    \item First, we study the sign-changing properties of the solution of the weighted logarithmic Laplacian problem with bounded weights away from $\lambda_1^L$ on a subset of positive measure, see Theorem \ref{thm-sign-changing}. These properties play a crucial role in both the identification of isolated points in the spectrum and the construction of nontrivial curves in the Fu\v{c}\'{i}k spectrum of $L_\Delta$. As an application of this, in Corollary \ref{pos-first-ef}, we also show that the eigenfunctions corresponding to eigenvalues $\lambda > \lambda_1^L$ are sign-changing, which is of independent interest.
    
    \item Next, we show that the lines $\lambda_1^L \times \mathbb{R}$ and $\mathbb{R} \times \lambda_1^L$, where $\lambda_1^L$ denotes the first eigenvalue of $L_\Delta$, belong to the spectrum $\Sigma_L$. To this end, we introduce a constrained energy functional $\tilde{E}_r$ (see Section~\ref{section:curve-construction}) and establish the existence of its critical points, which is equivalent to identifying points in the spectrum $\Sigma_L$. The existence of the first critical point, $\varphi_1$, an eigenfunction associated with $\lambda_1^L$, is obtained via global minimization of $\tilde{E}_r$ on a suitable constraint set. This yields the vertical line $\lambda_1^L \times \mathbb{R}$ belong to the spectrum $\Sigma_L$, see Proposition \ref{prop2}. The existence of a second critical point, $-\varphi_1$, follows from a local minimization argument and gives rise to the horizontal line $\mathbb{R} \times \lambda_1^L$, see Proposition \ref{prop3}. Finally, the existence of a third critical point is established by verifying the Palais--Smale condition and the mountain pass geometry of the constrained functional in a neighbourhood of the local minimum $-\varphi_1$. Applying the Mountain Pass Theorem, in Theorem \ref{third-crit-pt}, we obtain a spectral curve of the form $(r + c(r),\, c(r))$ and $(c(r),\, r + c(r))$ for $r \ge 0$, where $c(r)$ denotes the corresponding critical value of the constrained energy functional $\tilde{E}_r$.
    
    \item In Theorem~\ref{isolated}, we prove that the spectral lines $\lambda_1^L \times \mathbb{R}$ and $\mathbb{R} \times \lambda_1^L$ obtained above are isolated in $\Sigma_L$. In the works \cite{cuesta, GoyalSreenadh2014} dealing with the local $p$-Laplacian and the nonlocal fractional Laplacian, this property is established using continuous embeddings of the underlying energy space into Lebesgue spaces $L^r(\Omega)$ with $r>2$. However, such embeddings are not available for the energy space $\mathbb{H}(\Omega)$ associated with the operator $L_\Delta$. To overcome this obstruction, we employ analytical techniques from Orlicz space theory and make essential use of the recent optimal embedding results into Orlicz-type spaces established in \cite[Theorem~2.2]{Arora-Giacomoni-Vaishnavi}. 

    \item In Theorem~\ref{mainthrm-1}, we show that the point $(r + c(r),\, c(r))$ constitutes the first nontrivial element of $\Sigma_L$ lying on the line parallel to the diagonal passing through $(r,0)$. The proof relies crucially on the construction of a continuous path connecting the first and second critical points, $\varphi_1$ and $-\varphi_1$, of the constrained energy functional $\tilde{E}_r$, which remains strictly below the critical level $c(r)$. In the construction of such path, the nonlocal nature of the operator, combined with the presence of sign-changing terms in the associated quadratic form, gives rise to several additional tail contributions that are absent in the classical local and fractional settings. Controlling these terms requires a delicate analysis and constitutes a substantial technical difficulty in the argument. As a further consequence of Theorem~\ref{mainthrm-1}, we obtain a variational characterization of the second eigenvalue $\lambda_2^L$, which is new in the existing literature.

    \item Next, we investigate the monotonicity, regularity, and asymptotic behaviour of the first nontrivial spectral curve; see Propositions~\ref{curve-prop-1}, \ref{curve-prop-2}, and \ref{curve-prop-3}. In the works \cite{cuesta, GoyalSreenadh2014} concerning the local $p$-Laplacian and the nonlocal fractional Laplacian, these properties are established using the results based on the positivity of the principal eigenvalue. In contrast, for the operator $L_\Delta$, the first eigenvalue may change sign, which introduces a substantial analytical difficulty. This obstacle is overcome by exploiting the sign-changing properties of solutions to the weighted logarithmic Laplacian problem with weights bounded away from $\lambda_1^L$ on a subset of positive measure, as established in Theorem~\ref{thm-sign-changing}, together with its consequences in Lemma~\ref{lem-perturbed-fucik}. Combining the properties established above, Figure~\ref{ht} illustrates the diagonal points and the first nontrivial curve in the Fu\v{c}\'{i}k spectrum associated with $L_\Delta$.

        \begin{figure}[ht]\label{ht}
 \centering
 \begin{tikzpicture}[scale=0.8]

  Axes
 \draw[->] (-2.2,0) -- (8,0) node[right] {$\alpha$};
 \draw[->] (0,-2.2) -- (0,7) node[above] {$\beta$};

  Isolated eigenvalue lines
 \draw[very thick] (2,-2) -- (2,8);
 \draw[very thick] (-2,2) -- (8,2);

 \node[below] at (1.7,0) {$\lambda_1^L$};
 \node[left] at (0,1.7) {$\lambda_1^L$};

 \node[below right] at (2,-1.2) {$\lambda_1^L \times \mathbb{R}$};
 \node[above left] at (-1,2) {$\mathbb{R} \times \lambda_1^L$};

  Second eigenvalue guides
 \draw[dashed] (4,0) -- (4,4);
 \draw[dashed] (0,4) -- (4,4);

 \node[below] at (4,0) {$\lambda_2^L$};
 \node[left] at (0,4) {$\lambda_2^L$};

  Diagonal reference
 \draw[densely dashed] (0,0) -- (7.5,7.5);

  Diagonal eigenvalue points
 \fill (2,2) circle (2.2pt);
 \node[below left] at (2,2.7) {$(\lambda_1^L,\lambda_1^L)$};

 \fill (4,4) circle (2.2pt);
 \node[above right] at (4,3.7) {$(\lambda_2^L,\lambda_2^L)$};

 \fill (7,7) circle (2.2pt);
 \node[above right] at (7,6.5) {$(\lambda_k^L,\lambda_k^L)$};

  First nontrivial Fu\v{c}\'{i}k curve
 \draw[very thick, domain=2.7:9, smooth, variable=\x]
 plot ({\x},{2 + 4/(\x-2)});

  Annotation
 \node at (7.6,2.32) {First nontrivial curve};

   Marked points on the curve
  \fill (3.1,5.7) circle (2pt);
  \node[left] at (6.1, 6.2) {$(c(r),\, r+c(r))$};

  \fill (5.7,3.09) circle (2pt);
  \node[right] at (5.7,3.4) {$(r+c(r),\, c(r))$};

 \end{tikzpicture}
 \caption{Structure of the Fu\v{c}\'{i}k spectrum $\Sigma_L$ (thick lines, a non-trivial curve and diagonal points)}
 \label{fig:fucik-spectrum}
 \end{figure}
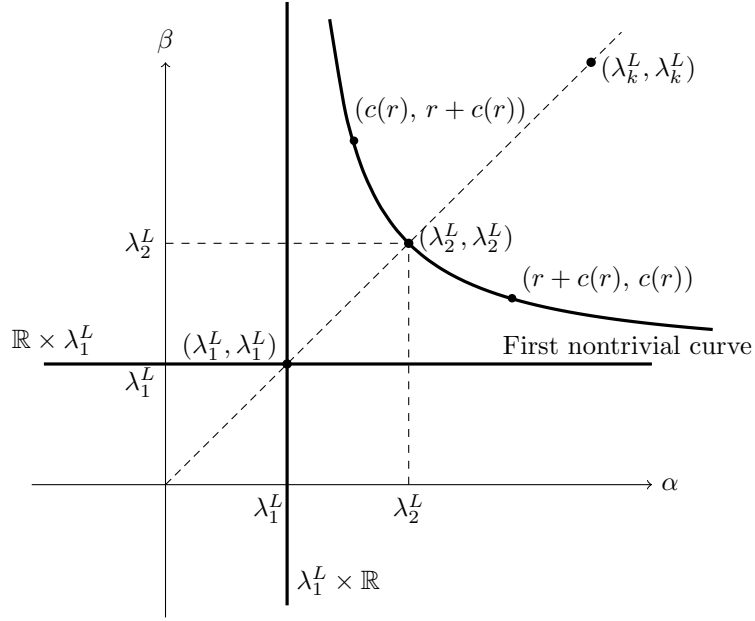

 \item Finally, we address a nonresonance problem involving the logarithmic Laplacian with asymptotic linearities lying between $(\lambda_1^L, \lambda_1^L)$ and the first nontrivial spectral curve. By employing variational methods and exploiting the consequences of Theorem~\ref{thm-sign-changing}, we establish the existence of a nontrivial solution.
\end{itemize}

\textbf{Outline of the paper:} In Section~\ref{section-sign-chang}, we derive the sign-changing properties of the solution of the weighted logarithmic Laplacian problem with bounded weights away from $\lambda_1$ on a subset of positive measure. In Section~\ref{section:curve-construction}, we introduce a parametrized constrained functional $\tilde E_r$, $r \in \mathbb{R}$, whose critical points characterize the Fu\v{c}\'{i}k spectrum $\Sigma_L$. Within this framework, we show that the lines $\lambda_1^L \times \mathbb{R}$ and $\mathbb{R} \times \lambda_1^L$, where $\lambda_1^L$ is the first eigenvalue of $L_\Delta$, belong to $\Sigma_L$. By applying the Mountain Pass Theorem, we further obtain a spectral curve of the form $(r+c(r),c(r))$ and $(c(r),r+c(r))$ for $r \ge 0$. Section~\ref{section-nontrivial-curve} shows that the above lines are isolated in $\Sigma_L$, that the obtained curve is nontrivial, and provides a variational characterization of the second eigenvalue $\lambda_2^L$. In Section~\ref{section-curve-prop}, we establish the strict monotonicity, Lipschitz continuity, and asymptotic behavior of this curve. Finally, in Section~\ref{section:nonresonance}, we study a nonresonance problem with respect to $\Sigma_L$ and prove the existence of a nontrivial solution.

\section{Sign-changing properties}\label{section-sign-chang}
We begin by showing the geodesic convexity property of the energy term $\mathcal{E}_L$ and the sign-changing properties of the solution of the weighted logarithmic Laplacian problem with bounded weights away from $\lambda_1$ on a subset of positive measure. As an application of this, we show that the eigenfunctions corresponding to eigenvalues $\lambda > \lambda_1$ are sign-changing. 

\begin{lemma}\label{lem:convexity}
    Let $N \geq 1$ and $W: \h(\Omega) \to \mathbb{R}$ be the functional defined as
    \begin{equation}\label{log-laplace:functional}
    W(u):= \mathcal{E}_L(u,u).
\end{equation}
   For functions $u, v \in \h(\Omega)$, consider the function $\sigma_t$ defined by
    \[
    \sigma_t(x) := \left(t u^2 + (1-t) v^2\right)^\frac{1}{2}, \quad \text{for all} \ t \in [0,1].
    \]
Then, 
\[
W(\sigma_t) \leq t W(u) + (1-t) W(v).
\]
\begin{proof}
Let $u \in \h(\Omega).$ By \cite[proposition 3.2]{ChenWeth2019}, we have
\[
\begin{split}
\mathcal{E}_L(u,u) &= \frac{c_N}{2} \into \into \frac{(u(x)-u(y))^2}{|x-y|^N} ~dx ~dy + \into \l(h_{\Omega}(x) + \rho_N\r) u^2(x) ~dx\\
&:= I_1(u) + I_2(u),
\end{split}
\]
where
\[
h_\Omega(x) = c_N \l(\int_{B_1(x) \setminus \Omega} \frac{1}{|x-y|^N} ~dy - \int_{\Omega \setminus B_1(x)} \frac{1}{|x-y|^N} ~dy\r).
\]
Notice that
\[
\sigma_t \equiv \|(t^\frac{1}{2} u, (1-t)^\frac{1}{2} v)\|_{\ell^2}
\]
where $\|\cdot\|_{\ell^2}$ denotes the $\ell^2$-norm in $\mathbb{R}^2$. Now, by applying the following triangle inequality
\[
\left| \|\xi\|_{\ell^2} - \|\eta\|_{\ell^2} \right| \leq \|\xi-\eta\|_{\ell^2},
\]
with $\xi= (t^{\frac{1}{2}} u(y), (1-t)^\frac{1}{2} v(y))$ and $\eta= (t^\frac{1}{2} u(x), (1-t)^\frac{1}{2} v(x))$, we obtain
\[
(\sigma_t(x)- \sigma_t(y))^2 \leq t (u(x)-u(y))^2 + (1-t) (v(x)-v(y))^2, \quad \text{for any} \ x, y \in \mathbb{R}^N.
\]
Multiplying the above inequality by the fractional kernel $\frac{1}{|x-y|^N}$ and integrating on $\Omega \times \Omega$, we obtain
\begin{equation}\label{convex-est-2}
   I_1(\sigma_t) \leq t I_1(u) + (1-t) I_2(v). 
\end{equation}
Moreover, it is easy to see that
\begin{equation}\label{convex-est-1}
    I_2(\sigma_t) = \into \l(h_{\Omega}(x) + \rho_N\r) (t u^2 + (1-t) v^2) ~dx = t I_2(u) + (1-t) I_2(v).
\end{equation}
Finally, by combining \eqref{convex-est-1} and \eqref{convex-est-2}, we obtain the required claim.
\end{proof}
\end{lemma}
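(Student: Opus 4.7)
The plan is to split $W(u) = \mathcal{E}_L(u,u)$ into two pieces, each of which is convex (respectively affine) along the curve $\sigma_t$, by using the reformulation given in \cite[Proposition~3.2]{ChenWeth2019}, namely
\begin{equation*}
\mathcal{E}_L(u,u) = \frac{c_N}{2} \int_\Omega \int_\Omega \frac{(u(x)-u(y))^2}{|x-y|^N}\,dx\,dy + \int_\Omega \bigl(h_\Omega(x)+\rho_N\bigr) u(x)^2\,dx,
\end{equation*}
where $h_\Omega$ absorbs the far-field tails coming from the kernel $j$ appearing in \eqref{quadratic-form}. This decomposition is crucial because it eliminates the non-sign-definite interaction term $c_N\iint u(x)u(y)j(x-y)$, which would otherwise obstruct a direct convexity estimate.

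For the first piece, the main step is the pointwise hidden-convexity inequality
\begin{equation*}
\bigl(\sigma_t(x)-\sigma_t(y)\bigr)^2 \le t\bigl(u(x)-u(y)\bigr)^2 + (1-t)\bigl(v(x)-v(y)\bigr)^2, \qquad x,y\in\mathbb{R}^N.
\end{equation*}
I would prove it by interpreting $\sigma_t(z) = \|(t^{1/2}u(z),\,(1-t)^{1/2}v(z))\|_{\ell^2}$ and invoking the reverse triangle inequality $\bigl|\|\xi\|_{\ell^2}-\|\eta\|_{\ell^2}\bigr|\le\|\xi-\eta\|_{\ell^2}$ in $\mathbb{R}^2$, then squaring and expanding. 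Multiplying by $c_N |x-y|^{-N}$ and integrating on $\Omega\times\Omega$ yields convexity of the nonlocal piece along $t\mapsto \sigma_t$.

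For the second piece, the key observation is trivial: by construction $\sigma_t^2 = t u^2 + (1-t)v^2$, so the weighted $L^2$ term is in fact \emph{affine} along the path, giving exact equality rather than inequality. Summing the two contributions yields $W(\sigma_t)\le tW(u)+(1-t)W(v)$, as required. The only genuine content is the hidden-convexity inequality in the second step; everything else is additive bookkeeping. A minor point that requires attention is the integrability of $(h_\Omega+\rho_N)u^2$ and $(h_\Omega+\rho_N)v^2$ so that the additivity is legitimate, but this is automatic for $u,v\in\mathbb{H}(\Omega)$ in view of the decomposition recalled above.
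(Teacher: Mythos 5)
Your proposal is correct and follows essentially the same route as the paper: the Chen--Weth decomposition $\mathcal{E}_L(u,u)=I_1(u)+I_2(u)$, the pointwise hidden-convexity inequality via the reverse triangle inequality for the $\ell^2$-norm in $\mathbb{R}^2$, and the observation that the weighted $L^2$ piece is affine along $t\mapsto\sigma_t$. Your added remark about integrability of $(h_\Omega+\rho_N)u^2$ is a reasonable piece of bookkeeping that the paper leaves implicit.
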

\begin{theorem}\label{thm-sign-changing}
    Let $a \in L^\infty(\Omega)$ such that $a(x) \geq \lambda_{1}^L$ for a.e. in $\Omega$ and 
    \[
    |\{x \in \Omega: a(x) > \lambda_1^L\}| >0.
    \]
   Let $v$ be a non-trivial solution of 
    \begin{equation}\label{weighted-problem}
        L_\Delta v = a(x) v \ \text{in} \ \Omega, \quad \text{and} \quad v=0 \ \text{in} \ \mathbb{R}^N \setminus \Omega. 
    \end{equation}
Then, $v$ is a sign-changing function in $\Omega.$  
\end{theorem}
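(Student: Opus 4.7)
The plan is to argue by contradiction by extracting a Picone-type identity from Lemma~\ref{lem:convexity}. Suppose $v$ does not change sign; after replacing $v$ with $-v$ if necessary, assume $v \geq 0$ with $v \not\equiv 0$, and normalize $\|v\|_2 = 1$. Let $\varphi_1$ denote the $L^2$-normalized positive first eigenfunction of $L_\Delta$. By a strong maximum principle for the logarithmic Laplacian applied to $L_\Delta v = a v$ (with $a \in L^\infty(\Omega)$), I would first deduce that $v > 0$ a.e.\ in $\Omega$ with boundary decay comparable to that of $\varphi_1$, ensuring that $\varphi_1^2/v \in \h(\Omega)$ is an admissible test function.

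Next, apply Lemma~\ref{lem:convexity} to the pair $(\varphi_1, v)$ along the curve $\sigma_t := (t\varphi_1^2 + (1-t) v^2)^{1/2}$, $t \in [0,1]$, to obtain the chord inequality $\Phi(t) := W(\sigma_t) \leq t \lambda_1^L + (1-t) \int_\Omega a\, v^2\, dx$. Dividing by $t > 0$ and letting $t \to 0^+$ yields $\Phi'(0) \leq \lambda_1^L - \int_\Omega a\, v^2\, dx$. On the other hand, differentiating the bilinear form $\mathcal{E}_L$ along the smooth curve $\sigma_t$ (with $\dot\sigma_0 = (\varphi_1^2 - v^2)/(2v)$) gives $\Phi'(0) = \mathcal{E}_L(v, \varphi_1^2/v) - \mathcal{E}_L(v, v)$, and testing the weak formulation of $L_\Delta v = a v$ against $\varphi_1^2/v$ identifies $\mathcal{E}_L(v, \varphi_1^2/v) = \int_\Omega a\, \varphi_1^2\, dx$. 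Substituting and cancelling the common $\int a v^2$ reduces the bound to $\int_\Omega a\, \varphi_1^2\, dx \leq \lambda_1^L$. Combined with the trivial opposite inequality $\int_\Omega a\, \varphi_1^2\, dx \geq \lambda_1^L \int_\Omega \varphi_1^2\, dx = \lambda_1^L$ coming from $a \geq \lambda_1^L$, this forces $\int_\Omega (a - \lambda_1^L)\, \varphi_1^2\, dx = 0$. Since $\varphi_1 > 0$ a.e.\ in $\Omega$ and the integrand is nonnegative, we conclude $a \equiv \lambda_1^L$ a.e.\ in $\Omega$, contradicting the hypothesis $|\{a > \lambda_1^L\}| > 0$.

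The main technical obstacle I expect is the rigorous admissibility of $\varphi_1^2/v$ in $\h(\Omega)$ and the justification of the one-sided differentiation of $\Phi$ at $t = 0$ --- both classical difficulties for Picone-type arguments in the nonlocal setting. I would handle them by replacing $\varphi_1^2/v$ with the regularized quotient $\varphi_1^2/(v + \varepsilon)$, $\varepsilon > 0$, establishing the corresponding chord inequality and weak-form identity uniformly in $\varepsilon$, and then passing to the limit $\varepsilon \to 0^+$ via dominated convergence; the required uniform integrability follows from the explicit form of the kernels $k$ and $j$ in~\eqref{kernels:op} together with the boundary regularity of $\varphi_1$ and $v$. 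Alternatively, if a Hopf-type lower bound $v \geq c\, \varphi_1$ in $\Omega$ for some $c > 0$ is available for $L_\Delta$, then $\varphi_1^2/v \leq \varphi_1/c \in \h(\Omega)$ directly and the regularization becomes unnecessary.
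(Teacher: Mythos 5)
Your proposal follows essentially the same route as the paper's own proof: contradiction, the strong maximum principle together with the Hopf-type two-sided comparability bound from \cite{HernandezLopezSaldana2024}, the convexity of $W$ along $\sigma_t = (t\varphi_1^2+(1-t)v^2)^{1/2}$ via Lemma~\ref{lem:convexity}, and the identification of the right-derivative $\Phi'(0^+)$ with the Picone quotient $\mathcal{E}_L\big(v,\varphi_1^2/v\big) - \mathcal{E}_L(v,v)$ tested against the weak formulation, reducing in the end to $\int_\Omega (a - \lambda_1^L)\varphi_1^2\,dx \leq 0$, which contradicts $|\{a>\lambda_1^L\}|>0$. One small caveat about your suggested ``alternative'': a pointwise Hopf bound $\varphi_1^2/v \leq c^{-1}\varphi_1$ is an $L^\infty$ estimate and does not by itself control the Gagliardo-type seminorm $\mathcal{E}(\varphi_1^2/v,\varphi_1^2/v)$, so it does not make $\varphi_1^2/v$ an admissible element of $\mathbb{H}(\Omega)$ without further work; accordingly the paper keeps the $\varepsilon$-regularization throughout, shows $w_\varepsilon\in\mathbb{H}(\Omega)$ with an $\varepsilon$-dependent energy bound, and passes to the limit $\varepsilon\to 0^+$ only in the resulting integral identities rather than in the function space.
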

\begin{proof}
    We proceed by contradiction. Suppose that $v \in \h(\Omega)$ be the non-negative weak solution of the problem \eqref{weighted-problem} (if $v \leq 0$ in $\Omega$, we can repeat the proof by replacing $v$ by $-v$). 
    By \eqref{quadratic-form}, we have
\[
\mathcal{E}_L(v, \phi) = \mathcal{E}(v, \phi) - \int_{\Omega} (a(x) - \rho_N) v \phi ~dx = \int_{\Omega} (j \ast v) \phi ~dx \geq 0, \quad \text{for all} \ \phi \in \h(\Omega), \phi \geq 0.
\]
Hence, $v$ is a non-trivial non-negative weak supersolution of the equation $Iv- (a(x) -\rho_N) v=0$ in $\Omega$ in the sense of \cite{Jarohs-Weth-2018}, where $I$ is an integral operator associated with the
kernel $k$ defined in \eqref{kernels:op}. Therefore, by applying \cite[Theorem 1.1]{Jarohs-Weth-2018} yields that $v>0$ in $\Omega.$  Let $u \in \h(\Omega)$ be a solution of the minimization problem 
    \[
    \lambda_{1,L} := \min\{{\mathcal{E}_L(u,u)}: u\in \h(\Omega), \|u\|_{2}=1\}.
    \]
By using \cite[Theorems 1.4 and 1.11]{ChenWeth2019} and 
\cite[Theorem 1.9]{Dyda-Jarohs-Sk-2024}, $u >0$ in $\Omega$ and $u, v \in C(\overline{\Omega}) \cap L^\infty(\Omega).$ Moreover, by \cite[Theorem 1.4 and Corollary 5.3]{HernandezLopezSaldana2024}, there exists $M \in (0,1)$ such that
\begin{equation}\label{ratio:bounds}
    M^{-1} \geq \frac{u(x)}{v(x)} \geq M, \quad \text{for all} \ x \in \Omega.
\end{equation}
Denote 
\[
v_\eps:= \begin{cases}
    v + \eps & \ x \in \Omega,\\
    0 & \ x \in \mathbb{R}^N \setminus \Omega,
\end{cases} \quad u_\eps:= \begin{cases}
    u + \eps & \ x \in \Omega,\\
    0 & \ x \in \mathbb{R}^N \setminus \Omega,
\end{cases}
\]
and 
    \[
    \sigma_t^\eps(x) = (t u_\eps^2 + (1-t) v_\eps^2)^\frac{1}{2}.
    \]
First, we show that
\[
w_\eps:= \frac{u_\eps^2-v_\eps^2}{v_\eps} \in \h(\Omega), \quad \text{for all} \ \eps \in (0,1).
\]
It is easy to see that $w_\eps =0$ in $\mathbb{R}^N \setminus \Omega.$ For $\eps \in (0,1)$, we have
\[
\begin{split}
    |w_\eps(x) - w_\eps(y)| & \leq \left|\frac{u_\eps^2(x)}{v_\eps(x)} - \frac{u_\eps^2(y)}{v_\eps(y)}\right| + |v_\eps(x) - v_\eps(y)|\\
    & \leq \left|\frac{u_\eps^2(x)}{v_\eps(x)} - \frac{u_\eps^2(y)}{v_\eps(x)}\right| + \left|\frac{u_\eps^2(y)}{v_\eps(x)} - \frac{u_\eps^2(y)}{v_\eps(y)}\right| + |v(x) - v(y)|\\
    & \leq \frac{1}{\eps} |u_\eps^2(x)- u_\eps^2(y)| + \frac{u_\eps^2(y)}{v_\eps(x) v_\eps(y)} \left|v_\eps(x) - v_\eps(y)\right| + |v(x) - v(y)|\\
    & \leq \frac{2(\|u\|_\infty + \eps)}{\eps} |u(x)- u(y)| + \left(1 + \frac{(\|u\|_\infty + \eps)^2}{\eps^2} \right)|v(x) - v(y)| \\
    & \leq C_1(\eps, \|u\|_\infty) |u(x)- u(y)| + C_2(\eps, \|u\|_\infty) |v(x) - v(y)|
\end{split}
\]
This implies that $\mathcal{E}(w_\eps, w_\eps) < + \infty.$ Hence the claim. Moreover, by using $u, v  \in L^\infty(\Omega)$ and \eqref{ratio:bounds}, we obtain $\{w_\eps\}_{\eps \in (0,1)}$ is uniformly bounded in $L^\infty(\Omega).$ Now, by Lemma \ref{lem:convexity}, $t \mapsto \sigma_t^\eps$ is a curve of functions in $\h(\Omega)$ along which the functional $W$ is convex. This gives
    \begin{equation}\label{convex-ineq-1}
        \begin{split}
    & \mathcal{E}_L(\sigma_t^\eps, \sigma_t^\eps) - \mathcal{E}_L(v_\eps, v_\eps)  \leq t \l( \mathcal{E}_L(u_\eps, u_\eps) - \mathcal{E}_L(v_\eps, v_\eps) \r)\\
    & = t \l( \mathcal{E}_L(u, u) - \mathcal{E}_L(v, v) \r) - t c_N \intr \l(u_\eps(x) u_\eps(y) - u(x) u(y)\r) j(x-y) ~dx ~dy \\
    & \qquad + t c_N \intr \l(v_\eps(x) v_\eps(y) - v(x) v(y)\r) j(x-y) ~dx ~dy \\
    & \qquad + t \rho_N \int_{\R^N} (u_\eps^2-u^2) ~dx - t \rho_N \int_{\R^N} (v_\eps^2-v^2) ~dx\\
    & = t \l(\lambda_{1,L} \|u\|_2^2 - \int_{\Omega} a(x) v^2 ~dx \r) - t c_N \intr \l(u_\eps(x) u_\eps(y) - u(x) u(y)\r) j(x-y) ~dx ~dy \\
    & \qquad + t c_N \intr \l(v_\eps(x) v_\eps(y) - v(x) v(y)\r) j(x-y) ~dx ~dy \\
    & \qquad + t \rho_N \int_{\R^N} (u_\eps^2-u^2) ~dx - t \rho_N \int_{\R^N} (v_\eps^2-v^2) ~dx\\
    \end{split}
\end{equation}
By Lemma \ref{lem:convexity}, the function $\Phi : [0,1] \to \mathbb{R}$ defined by $\Phi(t) := W(\sigma_t)$ is convex. Hence, by using the fact $v$ satisfies \eqref{weighted-problem} and taking $w_\eps$ as a test function, we have the following estimate 
\begin{equation}\label{convex-ineq-2}
    \begin{split}
\mathcal{E}_L(\sigma_t^\eps, \sigma_t^\eps) & - \mathcal{E}_L(v_\eps, v_\eps) = \Phi(t)- \Phi(0) \geq t \Phi'(0) = t \mathcal{E}_L\l(v_\eps, \frac{u_\eps^2-v_\eps^2}{v_\eps}\r) = t \mathcal{E}_L\l(v_\eps, w_\eps\r)\\
& = t \mathcal{E}_L\l(v, w_\eps\r) - t c_N \intr (v_\eps(x)-v(x)) w_\eps(y) j(x-y) ~dx ~dy \\
& \qquad + t \rho_N \int_{\R^N} (v_\eps(x)-v(x)) w_\eps(x)~dx\\
& = t \into a(x) v w_\eps ~dx - t c_N \intr (v_\eps(x)-v(x)) w_\eps(y) j(x-y) ~dx ~dy \\
& \qquad + t \rho_N \int_{\R^N} (v_\eps(x)-v(x)) w_\eps(x)~dx.
\end{split}
\end{equation}
Combining \eqref{convex-ineq-1} and \eqref{convex-ineq-2}, we obtain
\begin{equation}\label{eigen-lower:est}
    \begin{split}
    \lambda_{1,L} \|u\|_2^2 - \int_{\Omega} a(x) v^2 ~dx & \geq \into a(x) v w_\eps ~dx - c_N \intr (v_\eps(x)-v(x)) w_\eps(y) j(x-y) ~dx ~dy \\
    & \qquad + \rho_N \int_{\R^N} (v_\eps(x)-v(x)) w_\eps(x)~dx\\
    & \qquad  +  c_N \intr \l(u_\eps(x) u_\eps(y) - u(x) u(y)\r) j(x-y) ~dx ~dy \\
    & \qquad -  c_N \intr \l(v_\eps(x) v_\eps(y) - v(x) v(y)\r) j(x-y) ~dx ~dy \\
    & \qquad -  \rho_N \int_{\R^N} (u_\eps^2-u^2) ~dx + \rho_N \int_{\R^N} (v_\eps^2-v^2) ~dx= \lambda I_0 + \sum_{i=1}^4 I_i.
\end{split}
\end{equation}
Since $v w_\eps \in L^\infty(\Omega)$ and  $v w_\eps \to u^2-v^2$ a.e. in $\Omega$, by applying Lebesgue dominated convergence theorem, we obtain
\[
I_0 \to \into (u^2-v^2) ~dx  = \|u\|_2^2 - \|v\|_2^2 = 0.
\]
By using the definition of the kernel $j$ in \eqref{kernels:op}, we have
\[
\begin{split}
    |I_1| + |I_2| \leq \l(c_N |\Omega| + \rho_N \r) \eps \|w_\eps\|_1 \to \eps \to 0.
\end{split}
\]
Moreover, since $u_\eps \to u$, $v_\eps \to v$ in $L^2(\Omega)$ and a.e. in $\Omega$, and $\supp(u_\eps), \supp(u) \subseteq \Omega$, 
\[
I_3, I_4, I_5, I_6 \to 0 \ \text{as} \ \eps \to 0.
\]
Using the above convergence estimate in \eqref{eigen-lower:est}, we obtain
\[
\int_{\Omega} (\lambda_1^L - a(x)) u^2 ~dx \geq
\lambda_{1,L} \|u\|_2^2 - \int_{\Omega} a(x) v^2 ~dx \geq \into a(x) (u^2-v^2) ~dx \Longrightarrow \int_{\Omega} (\lambda_1^L - a(x)) u^2 ~dx \geq 0,
\]
which is not possible. This contradicts our assumption that $v \geq 0$ in $\Omega.$ Therefore, the non-trivial solution $v$ of \eqref{weighted-problem} changes sign in $\Omega$.
\end{proof}
Next, as an application of the above result, we prove the sign-changing property of the eigenfunctions with eigenvalues $\lambda> \lambda_1^L.$ 
\begin{corollary}\label{pos-first-ef}
    Let $v \geq 0$ be the weak solution of the problem
    \begin{equation}\label{eigen-value-problem}
        L_\Delta v = \lambda v \ \text{in} \ \Omega, \quad \text{and} \quad v=0 \ \text{in} \ \mathbb{R}^N \setminus \Omega. 
    \end{equation}
    Then, $\lambda= \lambda_{1}^L$ where $$\lambda_{1}^L := \min\{{\mathcal{E}_L(u,u)}: u\in \h(\Omega), \|u\|_{2}=1\}.$$ 
    In other words, if $u$ is any weak solution of \eqref{eigen-value-problem} with $\lambda > \lambda_{1}^L,$ then $u$ is a sign-changing function. 
\end{corollary}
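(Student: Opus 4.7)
The plan is to deduce the corollary as a direct application of Theorem \ref{thm-sign-changing}, combined with the variational characterization of $\lambda_1^L$ recalled in the statement.

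First I would observe that if $v \in \h(\Omega)$ is a nontrivial weak solution of the eigenvalue problem \eqref{eigen-value-problem}, then testing with $v$ itself gives $\mathcal{E}_L(v,v) = \lambda \|v\|_2^2$, and the variational characterization $\lambda_1^L = \inf\{\mathcal{E}_L(u,u): u \in \h(\Omega),\; \|u\|_2=1\}$ immediately forces $\lambda \geq \lambda_1^L$. So the only remaining possibility to rule out, under the hypothesis $v \geq 0$, is the strict inequality $\lambda > \lambda_1^L$.

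Next I would apply Theorem \ref{thm-sign-changing} with the constant weight $a(x) \equiv \lambda$. This function is obviously in $L^\infty(\Omega)$; the assumption $a(x) \geq \lambda_1^L$ a.e. is satisfied because $\lambda \geq \lambda_1^L$, and if in addition $\lambda > \lambda_1^L$ then $\{x \in \Omega : a(x) > \lambda_1^L\} = \Omega$, which has positive measure. Theorem \ref{thm-sign-changing} therefore asserts that any nontrivial solution $v$ of \eqref{weighted-problem} with this weight, that is, any eigenfunction associated with $\lambda$, must change sign in $\Omega$. This contradicts $v \geq 0$ and $v \not\equiv 0$, so we must have $\lambda = \lambda_1^L$. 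The second formulation in the statement is then just the contrapositive: for any weak solution $u$ of \eqref{eigen-value-problem} with $\lambda > \lambda_1^L$, neither $u \geq 0$ nor $u \leq 0$ can hold (the latter by applying the same argument to $-u$), so $u$ is sign-changing.

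There is no substantive obstacle in this proof, since the delicate convexity and perturbation analysis has already been absorbed into Theorem \ref{thm-sign-changing}; the only thing to check carefully is that the hypotheses of that theorem apply verbatim with $a \equiv \lambda$, which they do. The one subtle point worth pointing out in the write-up is that one cannot skip the preliminary step $\lambda \geq \lambda_1^L$: it is needed to ensure that the weight $a \equiv \lambda$ satisfies $a \geq \lambda_1^L$, which is a standing assumption in Theorem \ref{thm-sign-changing}.
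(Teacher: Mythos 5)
Your proposal is correct and follows exactly the paper's own argument, which simply invokes Theorem \ref{thm-sign-changing} with the constant weight $a(x)\equiv\lambda$. Your preliminary observation that $\lambda\geq\lambda_1^L$ (via testing with $v$ and the variational characterization), needed to verify the standing hypothesis $a\geq\lambda_1^L$, is a sensible and correct piece of bookkeeping that the paper leaves implicit.
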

\begin{proof}
The proof follows by taking $a(x) \equiv \lambda$ in Theorem \ref{thm-sign-changing}.
\end{proof}

\section{Construction of the curve in spectrum}\label{section:curve-construction}

This section is devoted to the construction of a non-trivial curve in the spectrum $\Sigma_L$ of $L_\Delta$. For this, we fix $r \in \mathbb{R}$ and define
\[
E_r(u) = \mathcal{E}_L(u,u)-r\int_\Omega (u^+)^2 ~dx, \quad u\in \mathbb H(\Omega).
\]
Let us consider the set 
\[\mathcal{P}= \{u\in \mathbb H(\Omega):~ I(u)=\int_\Omega u^2 ~dx =1\}\]
and define $\tilde{E_r}$ as the $E_r$ restricted to $\mathcal P$.
Since $E_r$ is $C^1$ functional in $\mathbb{H}(\Omega)$, by Lagrange multiplier rule, we say that $u\in \mathcal P$ is a critical point of $\tilde E_r$ if and only if there exists a $t \in \mathbb{R}$ such that $E_r'(u)= t I'(u)$ {\it i.e.}
\[\mathcal E_L(u,v)- r\int_\Omega u^+v ~dx =t\int_\Omega uv ~dx, \quad \text{for all} ~ v \in \mathbb H(\Omega).\]
On simplifying, this means that $u\in \mathcal P$ solves $(P_{r+t,t})$ in the weak sense {\it i.e.}
\begin{equation}\label{weak-formulation}
    \mathcal E_L(u,v)= (r+t)\int_\Omega u^+v ~dx -t\int_\Omega u^-v ~dx, \quad \text{for all} ~ v\in \mathbb H(\Omega) \quad \text{and} \quad (r+t,t)\in \Sigma_L. 
\end{equation}
By taking $u=v$ in \eqref{weak-formulation}, it is easy to observe that the Lagrange multiplier $t$ is equal to the corresponding critical value $\tilde E_r(u)$. Hence we have the following result:
\begin{lemma}\label{lem1}
    For $r \in  \mathbb{R}$, $(r+t,t)\in \Sigma_L$ if and only if  there exists a $u\in \mathcal P$ which is a critical point of $\tilde E_r$ satisfying $\tilde E_r(u)=t$.
\end{lemma}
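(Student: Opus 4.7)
The plan is to extract both directions directly from the Lagrange multiplier calculation already sketched in the paragraph preceding the statement; the only substantive step is to identify the Lagrange multiplier with the critical value $t$.

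\textbf{Forward direction.} Suppose $u \in \mathcal P$ is a critical point of $\tilde E_r$ with $\tilde E_r(u) = t$. The constraint $I(u) = 1$ is regular on $\mathbb{H}(\Omega)$ since $I'(u)[u] = 2 \neq 0$ whenever $u \in \mathcal P$, so the Lagrange multiplier rule yields some $\mu \in \mathbb{R}$ with
\[
\mathcal{E}_L(u,v) - r \int_\Omega u^+ v \, dx = \mu \int_\Omega u v \, dx \quad \text{for all } v \in \mathbb{H}(\Omega).
\]
Testing with $v = u$ and using $u^+ u = (u^+)^2$ together with $\|u\|_2 = 1$ gives $\mu = \mathcal{E}_L(u,u) - r \int_\Omega (u^+)^2 \, dx = \tilde E_r(u) = t$. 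Rewriting $r u^+ v + t u v = (r+t) u^+ v - t u^- v$ via $u = u^+ - u^-$ then converts the Lagrange equation into the weak form of $(P_{r+t,t})$, so $(r+t, t) \in \Sigma_L$.

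\textbf{Converse.} If $(r+t, t) \in \Sigma_L$, pick a nontrivial weak solution $u$ of $(P_{r+t,t})$. By homogeneity of the equation I may rescale so that $\|u\|_2 = 1$, placing $u$ in $\mathcal P$. The algebraic identity $(r+t) u^+ - t u^- = r u^+ + t u$ recasts the weak formulation as
\[
\mathcal{E}_L(u,v) - r \int_\Omega u^+ v \, dx = t \int_\Omega u v \, dx \quad \text{for all } v \in \mathbb{H}(\Omega),
\]
which is precisely the critical point relation $E_r'(u) = t\, I'(u)$ for $\tilde E_r$ at $u$. Testing once more with $v = u$ yields $\tilde E_r(u) = t$.

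There is no genuine analytic obstacle here: the argument is purely algebraic, and the only point requiring attention is the identity $(r+t) u^+ - t u^- = r u^+ + t u$, which is the bridge between the Lagrange equation for $\tilde E_r$ and the Fu\v{c}\'{i}k equation $(P_{r+t,t})$. This identity is precisely why the functional $E_r$ was set up with the asymmetric term $r \int_\Omega (u^+)^2 \, dx$; once it is invoked both implications reduce to bookkeeping.
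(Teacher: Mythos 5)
Your proof is correct and takes the same route as the paper: the lemma is essentially a restatement of the Lagrange multiplier computation sketched before the statement, and you fill in exactly the intended details (regularity of the constraint, testing with $v=u$ to identify the multiplier with $\tilde E_r(u)$, and the algebraic identity $ru^+ + tu = (r+t)u^+ - tu^-$ to translate between the Lagrange equation and the weak form of $(P_{r+t,t})$). The only point worth adding for completeness in the converse is to note explicitly that the rescaling uses the \emph{positive} homogeneity of the right-hand side $(\lambda u)^\pm = \lambda u^\pm$ for $\lambda > 0$, which you implicitly rely on.
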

{Next, in order to identify the points in the spectrum $\Sigma_L$ of $L_\Delta$, we are interested in finding the critical points of the constrained functional $\tilde E_r$, in view of Lemma \ref{lem1}. A first critical point comes from the global minimization of the constrained functional $\tilde E_r$ because for any $u \in \mathcal{P}$, we have
\[
\tilde E_r (u) \geq \lambda_{1}^L \|u\|_2^2 - r \|u^+\|_2^2 \geq \lambda_1^L - r.
\]
where $\varphi_1$ denotes the first eigenfunction of $L_\Delta$ corresponding to the first eigenvalue $\lambda_1^L$ satisfying $\|\varphi_1\|_2=1$ {\it i.e.} $\varphi_1 \in \mathcal P$. From now on, we assume that $r\geq 0$ which is not a restriction, since $\Sigma_L$ is symmetric with respect to diagonal on $(\alpha,\beta)$ plane. The following result concern the existence of the first critical point of $\tilde E_r$.}

\begin{proposition}\label{prop2}
    $\varphi_1$ is the first critical point of $\tilde E_r$ such that
    \begin{enumerate}
        \item[(i)] $\varphi_1$ is a global minimum of $\tilde E_r$ and
        \item[(ii)] $\tilde E_r(\varphi_1)= \lambda_1^L-r$.
    \end{enumerate}
    Thus $(\lambda_1^L,\lambda_1^L-r)\in \Sigma_L$ lying on the vertical line through $(\lambda_1^L,\lambda_1^L).$
\end{proposition}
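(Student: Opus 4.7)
The plan is to prove the two claims by establishing a matching lower bound and attainment statement on the constraint set $\mathcal{P}$, and then to read off the spectral conclusion via Lemma \ref{lem1}.

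First, I would establish the lower bound on $\tilde E_r$ over $\mathcal{P}$. For any $u \in \mathcal{P}$, the variational characterization $\lambda_1^L = \inf\{\mathcal{E}_L(w,w) : w \in \mathbb{H}(\Omega),\ \|w\|_2 = 1\}$ recalled from \cite{ChenWeth2019} gives $\mathcal{E}_L(u,u) \geq \lambda_1^L$. Since $r \geq 0$ and $0 \leq (u^+)^2 \leq u^2$ pointwise, I have
\[
r \int_\Omega (u^+)^2 \, dx \; \leq \; r \int_\Omega u^2 \, dx \; = \; r.
\]
Combining these two estimates yields $\tilde E_r(u) \geq \lambda_1^L - r$ for every $u \in \mathcal{P}$.

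Second, I would show that this bound is attained precisely at $\varphi_1$. Using the simplicity of $\lambda_1^L$ and the fact that the first eigenfunction is of constant sign (cf.\ \cite{ChenWeth2019}), I may normalize so that $\varphi_1 > 0$ a.e.\ in $\Omega$ and $\|\varphi_1\|_2 = 1$, in particular $\varphi_1 \in \mathcal{P}$. Then $\varphi_1^+ = \varphi_1$, hence $\int_\Omega (\varphi_1^+)^2 \, dx = 1$, and using $\mathcal{E}_L(\varphi_1,\varphi_1) = \lambda_1^L \|\varphi_1\|_2^2 = \lambda_1^L$, I obtain
\[
\tilde E_r(\varphi_1) \; = \; \mathcal{E}_L(\varphi_1,\varphi_1) - r \int_\Omega (\varphi_1^+)^2 \, dx \; = \; \lambda_1^L - r.
\]
This shows that $\varphi_1$ is a global minimizer on $\mathcal{P}$ with critical value $\lambda_1^L - r$, proving (i) and (ii).

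Finally, since a global minimizer on $\mathcal{P}$ is in particular a critical point of $\tilde E_r$, Lemma \ref{lem1} applies with $t = \tilde E_r(\varphi_1) = \lambda_1^L - r$, yielding $(r+t,t) = (\lambda_1^L,\, \lambda_1^L - r) \in \Sigma_L$, a point on the vertical line $\{\lambda_1^L\}\times \mathbb{R}$ through $(\lambda_1^L,\lambda_1^L)$. There is no genuine obstacle in this argument: it is essentially a packaging of the variational characterization of $\lambda_1^L$, the positivity of $\varphi_1$, and Lemma \ref{lem1}. The only point worth noting carefully is that the sign-definiteness of $\varphi_1$ is exactly what collapses the asymmetric term $r\|u^+\|_2^2$ back to $r\|u\|_2^2$, which is the mechanism that forces the lower bound to be sharp.
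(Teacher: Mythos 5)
Your proof is correct and follows essentially the same route as the paper: a two-sided argument combining the variational characterization $\lambda_1^L = \inf_{u\in\mathcal{P}}\mathcal{E}_L(u,u)$ with the elementary inequality $\int_\Omega (u^+)^2\,dx \leq \int_\Omega u^2\,dx = 1$ to obtain the lower bound, then direct evaluation at $\varphi_1$ (using its positivity to get $\varphi_1^+ = \varphi_1$) to show attainment, and finally Lemma \ref{lem1} to place the point in $\Sigma_L$. You spell out the positivity step more explicitly than the paper does, but this is a presentational difference, not a different argument.
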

\begin{proof}
    It is easy to see that 
    \[\tilde E_r(\varphi_1) = E_r(\varphi_1)= \mathcal E_L(\varphi_1,\varphi_1)-r = \lambda_1^L-r.\]
    We recall that $\lambda_1^L = \inf\limits_{u\in \mathcal P}\mathcal E_L(u,u)$ which further gives
    \begin{align*}
        \tilde E_r(u)=  \mathcal{E}_L(u,u)-r\int_\Omega (u^+)^2 ~dx \geq \lambda_1^L - r= \tilde E_r(\varphi_1) \quad \text{for any} \ u \in \mathcal{P},
    \end{align*}
since $ \int_\Omega (u^+)^2 ~dx \leq \int_\Omega |u|^2 ~dx =1$. This finishes the proof while using Lemma \ref{lem1}.
\end{proof}
{ Next, we establish the existence of a second critical point, which arises as a strict local minimum of the constrained functional $\tilde E_r$.}
\begin{lemma}\label{lem2}
    For any $u\in \mathbb H(\Omega)$, 
    \[\mathcal{E}_L(u,u)\geq \mathcal{E}_L(u^+,u^+)+ \mathcal{E}_L(u^-,u^-).\]
\end{lemma}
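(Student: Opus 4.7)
The plan is to use the pointwise decomposition $u = u^+ - u^-$ together with the orthogonality relation $u^+(x)\, u^-(x) = 0$ a.e., and to expand each of the three pieces of $\mathcal E_L(u,u)$ appearing in \eqref{quadratic-form} separately. In each case the inequality will arise as an equality plus an explicit nonnegative remainder involving the ``cross'' term $u^+(x)\,u^-(y)$.

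First, I would handle the $\rho_N$-term, which is trivial: since $u^+ u^- \equiv 0$ pointwise, one has $u^2 = (u^+)^2 + (u^-)^2$ a.e., so
\[
\rho_N \int_{\R^N} u^2 \, dx \;=\; \rho_N \int_{\R^N} (u^+)^2\, dx \;+\; \rho_N \int_{\R^N} (u^-)^2\, dx.
\]
Next, for $\mathcal E(u,u)$ I would expand the square $(u(x)-u(y))^2$ with $u = u^+ - u^-$. Using $u^+(x)u^-(x) = 0$ and $u^+(y)u^-(y) = 0$, the cross product simplifies and yields the identity
\[
(u(x)-u(y))^2 = (u^+(x)-u^+(y))^2 + (u^-(x)-u^-(y))^2 + 2 u^+(x) u^-(y) + 2 u^+(y) u^-(x).
\]
Multiplying by $\tfrac{c_N}{2} k(x-y)$, integrating over $\R^{2N}$, and using the symmetry of $k$, this produces
\[
\mathcal E(u,u) = \mathcal E(u^+,u^+) + \mathcal E(u^-,u^-) + 2 c_N \iint_{\R^{2N}} u^+(x)\, u^-(y)\, k(x-y)\, dx\, dy.
\]
The remainder is manifestly nonnegative since $u^\pm \geq 0$ and $k \geq 0$.

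For the nonlocal term involving $j$, I would expand directly
\[
u(x)u(y) = u^+(x)u^+(y) + u^-(x)u^-(y) - u^+(x)u^-(y) - u^-(x)u^+(y),
\]
which, together with the symmetry of $j$, gives
\[
-c_N \iint_{\R^{2N}} u(x) u(y)\, j(x-y)\, dx\, dy = -c_N \iint_{\R^{2N}} \bigl[u^+(x)u^+(y) + u^-(x)u^-(y)\bigr] j(x-y)\, dx\, dy + 2 c_N \iint_{\R^{2N}} u^+(x) u^-(y)\, j(x-y)\, dx\, dy.
\]
Again the remainder is nonnegative. Combining the three expansions and recalling the definition \eqref{quadratic-form} of $\mathcal E_L$, I obtain the exact identity
\[
\mathcal E_L(u,u) \;=\; \mathcal E_L(u^+,u^+) + \mathcal E_L(u^-,u^-) \;+\; 2 c_N \iint_{\R^{2N}} u^+(x)\, u^-(y)\, \bigl[k(x-y)+j(x-y)\bigr] dx\, dy,
\]
and since $k + j = |\cdot|^{-N}$ on $\R^N \setminus \{0\}$ by \eqref{kernels:op}, the remainder is nonnegative, which gives the claim. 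There is no real obstacle here; the only point to watch is that all integrals are well defined for $u \in \mathbb H(\Omega)$, which follows from $u^\pm \in \mathbb H(\Omega)$ and the fact that the cross integral $\iint u^+(x) u^-(y) |x-y|^{-N}\,dx\,dy$ can be controlled by $\mathcal E(u,u)$ and the $L^2$-norm via the expansion above, so all rearrangements are justified.
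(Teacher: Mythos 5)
Your proof is correct and follows essentially the same approach as the paper: expand $(u(x)-u(y))^2$ and $u(x)u(y)$ via $u=u^+-u^-$ and $u^+u^-\equiv 0$, observe the resulting cross terms have the right sign, and combine with the trivially additive $\rho_N$-term. The only cosmetic difference is that you keep the nonnegative remainder explicit and note $k+j=|\cdot|^{-N}$, whereas the paper simply drops the cross terms after establishing the two pointwise inequalities.
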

\begin{proof}
Note that, we have
\[
\begin{split}
(u(x)-u(y))^2 &= ((u^+ - u^-)(x) - (u^+ - u^-)(y))^2 = ((u^+(x)-u^+(y)) - (u^-(x) - u^-(y)))^2\\
& = (u^+(x)-u^+(y))^2 + (u^-(x) - u^-(y))^2 - 2 (u^+(x)-u^+(y)) (u^-(x) - u^-(y))\\
& = (u^+(x)-u^+(y))^2 + (u^-(x) - u^-(y))^2 + 2 u^+(x) u^-(y) + 2 u^-(x)u^+(y)) \\
& \geq (u^+(x)-u^+(y))^2 + (u^-(x) - u^-(y))^2,
\end{split}
\]
and
\[
\begin{split}
u(x) u(y) & = (u^+ - u^-)(x) (u^+ - u^-)(y) \\
& = u^+(x) u^+(y) + u^-(x) u^-(y) - u^-(x) u^+(y) - u^-(y) u^+(x) \leq u^+(x) u^+(y) + u^-(x) u^-(y).
\end{split}
\]
Finally, by using the above inequalities relations in definition of $\mathcal{E}_L(u,u)$, we have the required claim. 
\end{proof}

\begin{lemma}\label{lem3}
Let $0\not \equiv v_k\in \mathbb H(\Omega)$ satisfy $v_k\geq 0$ a.e. in $\Omega$ and $|v_k>0|\to 0$ for $k \in \mathbb{N}$. Then, 
   \[
   \frac{\mathcal{E}_L(v_k,v_k)}{\|v_k\|_2^2} \to +\infty \quad \text{as} \quad k \to \infty.\]
\end{lemma}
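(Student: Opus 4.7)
My plan is to argue by contradiction, using the recent optimal Orlicz embedding for $\mathbb H(\Omega)$ as the key analytic tool. Since the quotient $\mathcal E_L(v_k,v_k)/\|v_k\|_2^2$ is scale invariant, I will first normalize $\|v_k\|_2=1$ and suppose, towards a contradiction, that $\mathcal E_L(v_k,v_k)\leq C_0$ along some subsequence. The goal is then to show that the hypothesis $|\{v_k>0\}|\to 0$ combined with this bound forces $\|v_k\|_2\to 0$, contradicting the normalization.

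The first step is to upgrade the bound on $\mathcal E_L(v_k,v_k)$ to a bound on the Hilbertian norm $\|v_k\|=\mathcal E(v_k,v_k)^{1/2}$. Since $v_k$ vanishes outside the bounded set $\Omega$ and the tail kernel $j$ satisfies $0\leq j\leq 1$ on its support, the cross term in \eqref{quadratic-form} is controlled by $c_N|\Omega|\|v_k\|_2^2$, which gives $\|v_k\|^2\leq \mathcal E_L(v_k,v_k)+C_\Omega\|v_k\|_2^2$ and hence uniform boundedness in $\mathbb H(\Omega)$. I then invoke the optimal Orlicz embedding \cite[Theorem~2.2]{Arora-Giacomoni-Vaishnavi} to deduce $\|v_k\|_{L^\varphi(\Omega)}\leq C_1$ with $\varphi(t)\simeq t^2\log(e+t)$ near infinity. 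From the Luxemburg norm bound together with the $\Delta_2$ growth of $\varphi$, this yields a uniform estimate
\[
\int_\Omega v_k^2\log(e+v_k)\,dx \leq M_0.
\]

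Now comes the concentration argument. Writing $A_k=\{v_k>0\}$ and splitting the $L^2$-mass at a level $T>0$,
\[
1=\int_{A_k} v_k^2\,dx \leq T^2|A_k| \;+\; \frac{1}{\log(e+T)}\int_{\{v_k>T\}} v_k^2\log(e+v_k)\,dx \leq T^2|A_k| + \frac{M_0}{\log(e+T)},
\]
where I used $v_k^2 \leq v_k^2\log(e+v_k)/\log(e+T)$ on $\{v_k>T\}$. Choosing $T=T_k\to\infty$ slowly enough that $T_k^2|A_k|\to 0$, e.g.\ $T_k=|A_k|^{-1/4}$, forces the right-hand side to $0$ and delivers the desired contradiction.

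The main obstacle, and the reason the argument is nontrivial, is that $\mathbb H(\Omega)$ does not embed into any $L^p(\Omega)$ with $p>2$, so a naive H\"older estimate $\int_{A_k} v_k^2 \leq \|v_k\|_p^2|A_k|^{1-2/p}$ is unavailable. The newly established Orlicz embedding supplies exactly the slightly-better-than-$L^2$ integrability (the logarithmic improvement) that rules out $L^2$-mass concentrating on vanishingly small sets; all other pieces of the argument are elementary consequences of the definition of $\mathcal E_L$ and a one-parameter truncation.
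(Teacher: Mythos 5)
Your proof is correct, but it takes a genuinely different route from the paper's. The paper's own proof is a compactness argument: after normalizing $\|v_k\|_2 = 1$ and assuming $\mathcal{E}_L(v_k,v_k)$ bounded, it extracts a weakly convergent subsequence $v_k \rightharpoonup w$ in $\mathbb{H}(\Omega)$, uses the compact embedding \eqref{compact-embed} to pass to strong $L^2$-convergence, then observes that since $\|w\|_2=1$ there is some $\epsilon>0$ with $\sigma:=|\{w>\epsilon\}|>0$, and convergence in measure then forces $|\{v_k>\tfrac{\epsilon}{2}\}| > \tfrac{\sigma}{2}$ for $k$ large, contradicting $|\{v_k>0\}|\to 0$. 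Your argument instead is fully quantitative and avoids extracting any limit: you convert the $\mathcal{E}_L$-bound into an $\mathbb{H}(\Omega)$-bound, invoke the Orlicz embedding of \cite[Theorem~2.2]{Arora-Giacomoni-Vaishnavi} to obtain the modular estimate $\int_\Omega v_k^2\log(e+v_k)\,dx\leq M_0$, and then a truncation at level $T_k=|A_k|^{-1/4}$ delivers the contradiction directly. This is in fact the same mechanism the paper deploys later in the proof of Theorem~\ref{isolated} to rule out the degenerate cases of \eqref{eq11}, so you have effectively recognized that the Orlicz machinery covers Lemma~\ref{lem3} as well. What the paper's proof buys is brevity and complete independence from the Orlicz theory (only the compact $L^2$-embedding is needed); what your proof buys is an explicit, subsequence-free rate: the same computation shows $\frac{\mathcal{E}_L(v_k,v_k)}{\|v_k\|_2^2}\gtrsim \log\bigl(|\{v_k>0\}|^{-1}\bigr)$, a quantitative lower bound that the soft compactness argument does not provide.
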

\begin{proof}
    Denote $w_k :=\frac{v_k}{\|v_k\|_2^2}$ and assume on the contrary that $\{\mathcal{E}_L(w_k,w_k)\}$ has a bounded subsequence. { Now, by the definition of $\mathcal{E}_L(\cdot, \cdot)$, \cite[Lemma 3.4]{HernandezSaldana2022} and \eqref{compact-embed}, there exists a $w\in \mathbb H(\Omega)$ and for a further subsequence, we have
    \[w_k \rightharpoonup w ~\text{in}~\mathbb H(\Omega) \quad \text{and} \quad w_k \to w ~\text{in}~ L^2(\Omega).\]
    }
    Since $w_k\geq 0$, so $w\geq 0$ a.e. in $\Omega$ and $\int_\Omega w^2 =1$. Thus, for some $\epsilon>0$, $\sigma := |w>\epsilon|>0$ which further says that 
    \[|w_k>\frac{\epsilon}{2}|>\frac{\sigma}{2}, \quad \text{for $k$ sufficiently large.}\]
    Therefore, for sufficiently large $k$,
    \[|v_k>0|\geq |v_k >\frac{\epsilon}{2}\int_\Omega v_k^2|>\frac{\sigma}{2}\]
    which contradicts $|v_k>0|\to 0$.
    \end{proof}

\begin{proposition}\label{prop3}
     $-\varphi_1$ is the second critical point of $\tilde E_r$ such that
    \begin{enumerate}
        \item[(i)] $\varphi_1$ is a strict local minimum of $\tilde E_r$ and
        \item[(ii)] $\tilde E_r(-\varphi_1)= \lambda_1^L$.
    \end{enumerate}
    Thus, $(\lambda_1^L+r,\lambda_1^L)\in \Sigma_L$ lying on the horizontal line through $(\lambda_1^L,\lambda_1^L).$
\end{proposition}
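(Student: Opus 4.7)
The plan is to split the argument into three pieces: compute the critical value directly, prove strict local minimality by contradiction using Lemmas \ref{lem2}--\ref{lem3}, and then invoke Lemma \ref{lem1} to conclude.

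Part (ii) is a one-line calculation. Since $\varphi_1 > 0$ a.e.\ in $\Omega$ by \cite{ChenWeth2019}, we have $(-\varphi_1)^+ \equiv 0$ and $\|(-\varphi_1)^-\|_2 = \|\varphi_1\|_2 = 1$, so
\[
\tilde E_r(-\varphi_1) \;=\; \mathcal{E}_L(-\varphi_1,-\varphi_1) - r\int_\Omega \bigl((-\varphi_1)^+\bigr)^2 dx \;=\; \mathcal{E}_L(\varphi_1,\varphi_1) \;=\; \lambda_1^L.
\]

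For part (i), I would argue by contradiction: assume there exists $\{u_k\}\subset \mathcal{P}\setminus\{-\varphi_1\}$ with $u_k \to -\varphi_1$ in $\mathbb{H}(\Omega)$ and $\tilde E_r(u_k) \le \lambda_1^L$. After passing to subsequences I distinguish two subcases. In the first, $u_k^+ \equiv 0$ a.e., so $u_k \le 0$ with $\|u_k\|_2 = 1$, and the chain $\tilde E_r(u_k) = \mathcal{E}_L(u_k,u_k) \ge \lambda_1^L\|u_k\|_2^2 = \lambda_1^L \ge \tilde E_r(u_k)$ forces $u_k$ to be a minimizer; by the simplicity of $\lambda_1^L$ and the sign condition $u_k \le 0$, this gives $u_k = -\varphi_1$, contradicting $u_k \neq -\varphi_1$. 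In the second, $u_k^+ \not\equiv 0$ for all large $k$. The compact embedding \eqref{compact-embed} gives $u_k \to -\varphi_1$ in $L^2(\Omega)$, hence pointwise a.e.\ along a subsequence; combined with $-\varphi_1 < 0$ a.e., dominated convergence yields $|\{u_k > 0\}| \to 0$. Applying Lemma \ref{lem3} to $v_k := u_k^+$ produces $\mathcal{E}_L(u_k^+,u_k^+)/\|u_k^+\|_2^2 \to +\infty$. Invoking Lemma \ref{lem2}, the variational bound $\mathcal{E}_L(u_k^-,u_k^-) \ge \lambda_1^L\|u_k^-\|_2^2$, and $\|u_k^+\|_2^2 + \|u_k^-\|_2^2 = 1$, I obtain
\[
\tilde E_r(u_k) \;\ge\; \lambda_1^L + \|u_k^+\|_2^2\!\left(\frac{\mathcal{E}_L(u_k^+,u_k^+)}{\|u_k^+\|_2^2} - r - \lambda_1^L\right) \;>\; \lambda_1^L
\]
for $k$ large, contradicting $\tilde E_r(u_k) \le \lambda_1^L$.

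Once strict local minimality on the $C^1$ constraint manifold $\mathcal{P}$ is established, $-\varphi_1$ is automatically a critical point of $\tilde E_r$. Testing the Euler--Lagrange relation against the eigenfunction equation $\mathcal{E}_L(\varphi_1,v)=\lambda_1^L\int_\Omega \varphi_1 v\,dx$ identifies the Lagrange multiplier as $\lambda_1^L$, so Lemma \ref{lem1} yields $(r+\lambda_1^L,\lambda_1^L)\in\Sigma_L$, lying on the horizontal line through $(\lambda_1^L,\lambda_1^L)$. The step I expect to be the most delicate is the sign-separation argument in the second subcase, namely upgrading $\mathbb{H}(\Omega)$-convergence into the measure-theoretic decay $|\{u_k > 0\}| \to 0$ that unlocks Lemma \ref{lem3}. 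This reduction is where both the strict positivity of $\varphi_1$ and the compact embedding into $L^2(\Omega)$ are essential.
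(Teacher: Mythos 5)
Your proposal is correct and follows essentially the same strategy as the paper's proof: you argue (i) by contradiction with a sequence $u_k\to-\varphi_1$ in $\mathbb{H}(\Omega)$ satisfying $\tilde E_r(u_k)\le\lambda_1^L$, dispose of the case $u_k\le 0$ by strict minimality (the paper phrases this as the claim that $u_k$ changes sign for large $k$), and in the sign-changing case combine Lemma~\ref{lem2}, the Rayleigh bound on $\mathcal{E}_L(u_k^-,u_k^-)$, and Lemma~\ref{lem3} applied to $u_k^+$ to force a contradiction. Your chain
\[
\tilde E_r(u_k)\;\ge\;\lambda_1^L+\|u_k^+\|_2^2\Bigl(\tfrac{\mathcal{E}_L(u_k^+,u_k^+)}{\|u_k^+\|_2^2}-r-\lambda_1^L\Bigr)>\lambda_1^L
\]
is just a rearrangement of the paper's inequality $t_k-r\le\lambda_1^L$ with $t_k:=\mathcal{E}_L(u_k^+,u_k^+)/\|u_k^+\|_2^2$, so the two contradictions are the same fact stated in two ways. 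One small remark: you spell out, via a.e.\ convergence and dominated convergence, why $|\{u_k>0\}|\to 0$; the paper takes this for granted, so your version is actually slightly more self-contained on this point. The identification of the Lagrange multiplier can also be read off directly from the remark preceding Lemma~\ref{lem1}, which already states that the multiplier equals the critical value; your re-derivation via the eigenfunction equation is correct but not needed.
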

\begin{proof}
    It is easy to verify that $\tilde E_r(-\varphi_1)= \lambda_1^L.$ To prove \textit{(i)}, on the contrary, suppose there exists a $\{u_k\}\subset \mathcal P$ such that $u_k\not\equiv -\varphi_1$ for all $k$, $\tilde E_r(u_k)\leq \lambda_1^L$ and 
    \begin{equation}\label{eq1}
        u_k\to -\varphi_1~\text{in}~\mathcal H(\Omega).
    \end{equation}
    \textbf{Claim:} $u_k$ changes sign for sufficiently large $k$.\\
    Due to \eqref{eq1} and $\varphi_1 >0$ in $\Omega$, $u_k$ must be negative for some $x\in\Omega$, when $k$ is large enough. If $u_k\leq 0$ a.e. in $\Omega$, then 
    \[\tilde E_r(u_k) = \mathcal E_L(u_k,u_k)-r\int_\Omega(u_k^+)^2 ~dx= \mathcal E_L(u_k,u_k) > \lambda_1^L,\]
    since $u_k\not \equiv \pm\varphi_1$. This contradicts $\tilde E_r(u_k)\leq \lambda_1^L$, hence the claim. Next, we define
    { \[
    t_k:= \frac{\mathcal{E}_L(u_k^+,u_k^+)}{\int_\Omega(u_k^+)^2 ~dx}
    \]
    }
    so that by using Lemma \ref{lem2}, we get
    \begin{align*}
        \tilde E_r(u_k) &\geq \mathcal{E}_L(u_k^+,u_k^+)+ \mathcal{E}_L(u_k^-,u_k^-)-r\int_\Omega(u_k^+)^2 ~dx =(t_k -r)\int_\Omega(u_k^+)^2 ~dx + \mathcal{E}_L(u_k^-,u_k^-)\\
        &\geq (t_k -r)\int_\Omega(u_k^+)^2 ~dx+ \lambda_1^L\int_\Omega(u_k^-)^2 ~dx.
    \end{align*}
Combining this with $$\tilde E_r(u_k)\leq \lambda_1^L = \lambda_1^L\int_\Omega(u_k^+)^2 ~dx + \lambda_1^L\int_\Omega(u_k^-)^2 ~dx,$$ we obtain
\[t_k -r \leq \lambda_1^L, ~\text{since}~ \int_\Omega(u_k^+)^2 ~dx>0.\]
    But \eqref{eq1} says that $|u_k>0|\to 0$ as $k\to \infty$ and hence $t_k\to \infty$, due to Lemma \ref{lem3}. This contradicts $t_k -r \leq \lambda_1^L$ which finishes the proof.
\end{proof}
{ \begin{remark}
    When $r=0$, the two critical values $\tilde E_r(\varphi_1)$ and $\tilde E_r(-\varphi_1)$ coincides as well as the critical points in $\Sigma_L.$
\end{remark}
Now we aim to obtain the third critical point, using a version of the Mountain Pass theorem, see \cite[Proposition 2.5]{{cuesta}}. In order to use it, we derive the $(P.S.)$ condition and the geometry of the functional $E_r$ on $\mathcal{P}.$ We define the norm of the derivative at $u \in \mathcal{P}$ of the restriction $\tilde{E}_r$ of $E_r$ to $\mathcal{P}$ as
\begin{equation}\label{def-deri-norm}
    \|\tilde{E}_r'(u)\|_\ast = \min\{ \|E_r'(u) - t I'(u)\|_{\mathbb{H}^\ast(\Omega)} : t \in \mathbb{R}\}
\end{equation}
where $\|\cdot\|_{\mathbb{H}^\ast(\Omega)}$ denotes the norm on the dual space $\mathbb{H}(\Omega)$. }
\begin{lemma}\label{PS-cond-critical}
    { $E_r$} satisfies the $(P.S.)$ condition on $\mathcal P$ { {\it i.e.} for any sequence $u_k \in \mathcal{P}$ such that $E_r(u_k)$ is bounded and $\|\tilde{E}_r'(u_k)\|_\ast \to 0$, $u_k$ admits a convergent subsequence.}
\end{lemma}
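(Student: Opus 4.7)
The plan is a standard Palais--Smale argument for a constrained functional, but tailored to the splitting $\mathcal{E}_L(u,u) = \|u\|^2 - c_N\iint u(x)u(y)\,j(x-y)\,dx\,dy + \rho_N \|u\|_2^2$ coming from \eqref{quadratic-form}. Let $(u_k) \subset \mathcal{P}$ be a Palais--Smale sequence, so $\|u_k\|_2 = 1$, $E_r(u_k)$ is bounded, and by \eqref{def-deri-norm} there exists $t_k \in \mathbb{R}$ with $E_r'(u_k) - t_k I'(u_k) \to 0$ in $\mathbb{H}(\Omega)^\ast$. The first step is to observe that testing this against $u_k$ gives $2E_r(u_k) - 2t_k = o(1)$, because $E_r'(u_k)(u_k) = 2\mathcal{E}_L(u_k,u_k) - 2r\|u_k^+\|_2^2 = 2E_r(u_k)$ and $I'(u_k)(u_k) = 2$. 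Hence $t_k = E_r(u_k) + o(1)$ is bounded, and along a subsequence $t_k \to t \in \mathbb{R}$.

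Next I would establish the boundedness of $(u_k)$ in $\mathbb{H}(\Omega)$. From boundedness of $E_r(u_k)$ and $\|u_k^+\|_2 \le 1$, the quadratic form $\mathcal{E}_L(u_k,u_k)$ is bounded. Using the decomposition above together with the estimate
\[
\left| c_N \iint_{\mathbb{R}^{2N}} u_k(x)u_k(y)\,j(x-y)\,dx\,dy \right| \le c_N \iint_{\Omega \times \Omega, |x-y|\ge 1} |u_k(x)||u_k(y)|\,dx\,dy \le c_N|\Omega|\,\|u_k\|_2^2,
\]
(since $j \le 1$ on its support and $\Omega$ is bounded), one recovers
\[
\|u_k\|^2 = \mathcal{E}_L(u_k,u_k) + c_N \iint u_k(x)u_k(y)\,j(x-y)\,dx\,dy - \rho_N \|u_k\|_2^2 \le C.
\]
Therefore, up to a further subsequence, $u_k \rightharpoonup u$ in $\mathbb{H}(\Omega)$, and by the compact embedding \eqref{compact-embed}, $u_k \to u$ strongly in $L^2(\Omega)$. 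Consequently $u_k^\pm \to u^\pm$ in $L^2(\Omega)$.

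Now I would promote the weak to strong convergence by testing the condition $E_r'(u_k) - t_k I'(u_k) \to 0$ against $v_k := u_k - u$, which is bounded in $\mathbb{H}(\Omega)$. This yields
\[
\mathcal{E}_L(u_k, u_k-u) - r \int_\Omega u_k^+(u_k - u)\,dx - t_k \int_\Omega u_k (u_k - u)\,dx = o(1).
\]
The last two $L^2$ pairings tend to $0$ by strong convergence in $L^2$. Expanding $\mathcal{E}_L(u_k, u_k - u)$, the zero-order term $\rho_N\int u_k(u_k-u)\,dx$ vanishes similarly, and the nonlocal tail $\iint u_k(x)(u_k-u)(y)\,j(x-y)\,dx\,dy$ also tends to $0$, since for $y \in \Omega$ the map $y \mapsto \int_\Omega u_k(x) j(x-y)\,dx$ is uniformly bounded in $L^2(\Omega)$ (it is the action of a bounded integral operator on $L^2(\Omega)$, as $|x-y|\ge 1$ forces $j(x-y) \le 1$), while $u_k - u \to 0$ in $L^2(\Omega)$. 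Thus $\mathcal{E}(u_k, u_k - u) \to 0$; combined with $\mathcal{E}(u, u_k - u) \to 0$ from weak convergence in $\mathbb{H}(\Omega)$, subtracting yields $\|u_k - u\|^2 = \mathcal{E}(u_k - u, u_k - u) \to 0$.

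The main obstacle I expect is not any deep analytical difficulty, but keeping careful track of the zero-order and nonlocal $j$-perturbations of the Gagliardo form $\mathcal{E}$: one must ensure that each of these terms defines a compact (or at least weakly continuous) perturbation on $\mathbb{H}(\Omega)$, so that the standard scheme --- bounded PS sequence, weak limit, strong convergence via the dominant elliptic part $\mathcal{E}$ --- actually closes. The integrability properties of the kernel $j$ on bounded domains, together with \eqref{compact-embed}, are precisely what make this work.
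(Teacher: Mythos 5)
Your proposal is correct and follows essentially the same scheme as the paper's proof: establish boundedness of $\{u_k\}$ in $\mathbb{H}(\Omega)$ from the $E_r$-bound and the constraint, extract a weak limit, test against $u_k - u$, and use the compactness of the $j$-tail and $\rho_N$-perturbations to reduce matters to the dominant Gagliardo part $\mathcal{E}$. Where the paper invokes \cite[Lemma 3.4]{HernandezSaldana2022} for the norm bound and the $(S)$-property from \cite[Lemma 3.3]{Arora-Hajaiej-Perera-2026} for the final strong convergence, you instead carry out both steps by hand via the decomposition \eqref{quadratic-form}; this is a legitimate, more self-contained variant of the same argument.

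One small presentational point: as written, the derivation $2E_r(u_k) - 2t_k = o(1)$ in your first step implicitly uses $\|u_k\| = O(1)$, since testing $E_r'(u_k) - t_k I'(u_k) \to 0$ in $\mathbb{H}^\ast(\Omega)$ against $u_k$ only yields $o(\|u_k\|)$ a priori. This is not a real gap because your second step establishes $\|u_k\|\le C$ independently of $t_k$, so the order of the two steps should simply be swapped (which is what the paper does). After that permutation the argument closes cleanly.
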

\begin{proof}
Let $\{u_k\}\subset \mathcal P$ and $\{t_k\} \subset \mathbb R$ be such that there exists $K>0$ for which 
\begin{equation}\label{eq3}
    { | E_r(u_k)|  =\left|\mathcal{E}_L(u_k,u_k)-r\int_\Omega (u^+_k)^2 ~dx \right|} \leq K
\end{equation}
and 
\begin{equation}\label{eq4}
{\left| \mathcal{E}_L(u_k,v)- r \int_\Omega u_k^+v ~dx - t_k \int_\Omega u_k v ~dx \right| \leq \epsilon_k \|v\|}, \quad \text{for all} ~ v \in \mathbb H(\Omega),
\end{equation}
where $\epsilon_k\to 0$. { Since $u_k \in \mathcal{P}$ for all $k \in \mathbb{N}$, \cite[Lemma 3.4]{HernandezSaldana2022} and \eqref{eq3} implies that $\{u_k\}$ is bounded in $\mathbb{H}(\Omega)$}. So upto a subsequence (denoted by same notation) and { using \eqref{compact-embed}}, there exists a $u \in \mathbb H(\Omega)$ such that $u_k\to u$ a.e. in $\Omega$ and
\begin{equation}\label{eq5}
     u_k \rightharpoonup u \quad \text{in } \mathbb H(\Omega) \quad \text{and} \quad u_k \to u \quad \text{in } L^2(\Omega). 
\end{equation}
Putting $v=u_k$ in \eqref{eq4}, we easily get that $\{t_k\}$ is bounded in $\mathbb R$. Then taking $v=u_k-u$ in \eqref{eq4}, we obtain 
\begin{align*}
    \mathcal E_L(u_k,u_k-u)= r\int_\Omega u_k^+(u_k-u) ~dx +t_k\int_\Omega u_k(u_k-u) ~dx +O(\epsilon_k)
\end{align*}
So, due to \eqref{eq5} and boundedness of $\{t_k\}$, we conclude that 
\[\lim_{k\to \infty}\mathcal E_L(u_k,u_k-u)=0.\]
Finally, by using the $\text{(S)}$-property of the operator $L_\Delta$ (see, \cite[Lemma 3.3]{Arora-Hajaiej-Perera-2026}), we obtain $u_k \to u$ in $\mathbb H(\Omega)$, which finishes the proof.
\end{proof}

The next lemma describes geometry of the constrained functional $\tilde E_r$ near the local minimum $-\varphi_1$.

\begin{lemma}\label{MP-geo}
    Let $\epsilon_0>0$ be such that
    \begin{equation}\label{local-minimum-condn}
       \tilde E_r(u) > \tilde E_r(-\varphi_1) \quad \text{for all} \ u\in B_{\epsilon_0}(-\varphi_1)\cap \mathcal P \ \text{with} \ u\neq -\varphi_1 
    \end{equation}
    where $B_{\epsilon_0}(-\varphi_1)$ denotes the ball in $\mathbb H(\Omega)$ of radius $\eps_0$ centered at $-\varphi_1$. Then, for any $\epsilon\in (0,\epsilon_0)$,
    \begin{equation}
        \inf\{\tilde E_r(u):~u\in \mathcal P,~\|u-(-\varphi_1)\|=\epsilon\}> \tilde E_r(-\varphi_1).
    \end{equation}
\end{lemma}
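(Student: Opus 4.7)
The plan is to argue by contradiction, exploiting the weak lower semicontinuity of $\tilde E_r$ together with the Hilbert space structure of $\mathbb{H}(\Omega)$. The strategy is to show that if the infimum over the sphere actually equalled $\tilde E_r(-\varphi_1) = \lambda_1^L$, then a minimizing sequence $\{u_k\}$ on that sphere would be forced to converge \emph{strongly} to $-\varphi_1$ in $\mathbb{H}(\Omega)$, which is impossible since each $u_k$ sits at distance exactly $\eps > 0$ from $-\varphi_1$.

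Concretely, I would assume for contradiction that the displayed infimum equals $\lambda_1^L$ --- by \eqref{local-minimum-condn} it is at least this value, since the sphere lies in $B_{\eps_0}(-\varphi_1) \cap \mathcal P$ and does not contain $-\varphi_1$ --- and pick a minimizing sequence $\{u_k\} \subset \mathcal P$ with $\|u_k - (-\varphi_1)\| = \eps$ and $\tilde E_r(u_k) \to \lambda_1^L$. Since $\{u_k\}$ is bounded in $\mathbb{H}(\Omega)$, by reflexivity and the compact embedding \eqref{compact-embed}, up to a subsequence $u_k \rightharpoonup u^*$ weakly in $\mathbb{H}(\Omega)$ and $u_k \to u^*$ in $L^2(\Omega)$; hence $u^* \in \mathcal P$ and $\|u^* - (-\varphi_1)\| \leq \eps < \eps_0$ by weak lower semicontinuity of the norm. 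Next I would verify that $\tilde E_r$ is weakly lower semicontinuous on $\mathbb{H}(\Omega)$: the leading term $\mathcal E(u,u) = \|u\|^2$ is weakly l.s.c.; the $j$-correction $c_N \iint u(x) u(y) j(x-y)\,dx\,dy$ defines a continuous bilinear form on $L^2(\Omega) \times L^2(\Omega)$ (since elements of $\mathbb{H}(\Omega)$ are supported in the bounded set $\Omega$, on which $j$ is bounded); and the terms $\rho_N \int u^2$, $-r \int (u^+)^2$ are also $L^2$-continuous. Therefore $\tilde E_r(u^*) \leq \liminf_k \tilde E_r(u_k) = \lambda_1^L$, and assumption \eqref{local-minimum-condn} forces $u^* = -\varphi_1$.

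Finally, since $u_k \to -\varphi_1$ in $L^2(\Omega)$ and $\varphi_1 > 0$, one has $u_k^+ \to 0$ in $L^2(\Omega)$, and by the continuity observations above the $j$-term, the $\rho_N$-term, and the $r\int (u_k^+)^2$-term all converge to their values at $-\varphi_1$. Subtracting these from $\tilde E_r(u_k) \to \lambda_1^L = \mathcal E_L(-\varphi_1,-\varphi_1)$ yields $\mathcal E(u_k,u_k) \to \mathcal E(-\varphi_1,-\varphi_1)$, i.e.\ $\|u_k\| \to \|-\varphi_1\|$. Weak convergence together with convergence of norms in the Hilbert space $\mathbb{H}(\Omega)$ upgrades to strong convergence $u_k \to -\varphi_1$ in $\mathbb{H}(\Omega)$, contradicting $\|u_k - (-\varphi_1)\| = \eps > 0$. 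The main technical obstacle is verifying that every subleading piece of $\mathcal E_L$ (the $j$-term, the $\rho_N$-term, and the sign-dependent $-r(u^+)^2$ correction) passes to the limit under mere $L^2$ convergence; once this is handled, only the $\mathcal E$-part remains, and the Hilbert identity relating weak convergence and convergence of norms to strong convergence produces the contradiction without needing the full $(P.S.)$ condition of Lemma \ref{PS-cond-critical}.
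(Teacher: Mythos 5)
Your argument is correct, and it takes a genuinely different route from the paper. The paper follows the Cuesta--de Figueiredo--Gossez template: assume the infimum on the sphere equals $\lambda_1^L$, apply Ekeland's variational principle on the annulus $M_\delta$ to produce an almost-critical sequence $\{v_k\}$, verify it is a $(P.S.)$ sequence, and then invoke Lemma~\ref{PS-cond-critical} to extract a strongly convergent subsequence with limit at distance $\epsilon$ from $-\varphi_1$ and energy $\lambda_1^L$, contradicting \eqref{local-minimum-condn}. You bypass Ekeland and the $(P.S.)$ machinery entirely by exploiting the quadratic structure of $\tilde E_r$ on the Hilbert space $\mathbb H(\Omega)$: split $\tilde E_r(u) = \|u\|^2 + (\text{terms continuous under } L^2\text{-convergence})$, use weak lower semicontinuity of the $\mathbb H$-norm together with the strict local minimum condition to force the weak limit of a minimizing sequence on the sphere to be $-\varphi_1$, and then read off norm convergence $\|u_k\| \to \|\varphi_1\|$ from the convergence of energies. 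The Radon--Riesz property of the Hilbert space then upgrades to strong convergence, contradicting $\|u_k-(-\varphi_1)\|=\epsilon$. Your verification that the $j$-term, the $\rho_N$-term, and the $r\int(u^+)^2$-term are all $L^2$-continuous on functions supported in the bounded set $\Omega$ (using $j\leq 1$ and Lipschitz continuity of $u\mapsto u^+$) is exactly the needed technical content and is sound. The paper's Ekeland-based route is more robust (it carries over verbatim to the $p$-Laplacian, where no quadratic splitting exists), whereas yours is shorter and more self-contained in the present Hilbertian setting, needing neither Ekeland nor the $(S)$-property invoked in Lemma~\ref{PS-cond-critical}.
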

\begin{proof}
    On the contrary, let us assume that there exists a $\epsilon\in (0,\epsilon_0)$ for which the above infimum is equal to $\tilde E_r(-\varphi_1)=\lambda_1^L$. This means there exists a sequence $\{u_k\}\subset \mathcal P$ satisfying $\|u_k-(-\varphi_1)\|=\epsilon$ and 
    \begin{equation}\label{upper-bound}
       \tilde E_r(u_k) \leq \lambda_1^L + \frac{1}{2k^2}. 
    \end{equation}
    Now we choose a $\delta>0$ such that $0<\epsilon -\delta < \epsilon+\delta <\epsilon_0$ and define the set
    \[M_\delta = \{u\in \mathcal P: ~ \epsilon-\delta \leq \|u-(-\varphi_1)\|\leq \epsilon +\delta\}.\]
    Due to choice of $\epsilon_0$ and our contradiction hypothesis, we easily get $\inf\{\tilde E_r(u):~u\in M_\delta\}= \lambda_1^L$. We now apply the Ekeland's variational principle to $\tilde E_r$ on $\mathcal P$, for each $k$, to obtain the existence of  $v_k \in M_\delta$ satisfying
    \begin{align}
        \tilde E_r(v_k)&\leq \tilde E_r(u_k), \quad \|u_k-v_k\| \leq \frac{1}{k}\label{eq7}\\
        \tilde E_r(v_k)&\leq \tilde E_r(u) + \frac{1}{k}\|u-v_k\|, \quad \text{for all} \  u \in M_\delta. \label{eq7-1}
    \end{align}
    \textbf{Claim:} $\{v_k\}$ is a $(P.S.)$ sequence for $\tilde E_r$ on $\mathcal P$.\\
   Establishing this claim shall suffice because Lemma \ref{PS-cond-critical} will immediately give us that { for a subsequence}, $v_k\to v$ strongly in $\mathbb H(\Omega)$, for some $v\in \mathbb H(\Omega).$ Moreover due to compact embedding, $v \in \mathcal P$ and by \eqref{eq7} with $\|u_k-(-\varphi_1)\|=\epsilon$, we obtain
   \[\|v-(-\varphi_1)\|=\epsilon ~\text{and}~ \tilde E_r(v)=\lambda_1^L\]
   which is a contradiction to \eqref{local-minimum-condn}.
   
   { By \eqref{upper-bound} and \eqref{eq7}, it is easy to see that $\tilde E_r(v_k)$ is bounded. Next, we need to show that $\|\tilde E_r(v_k)\|_*\to 0$, to prove the desired claim. For this, we fix $k>\frac{1}{\delta}$ and choose $w \in \mathbb H(\Omega)$ tangent to $\mathcal{P}$ at $v_k$ satisfying
    \[\int_\Omega v_k w ~dx =0.\]
    Now we define for each $t\in \mathbb R$,
    \[f_t= \frac{v_k+tw}{\sigma(t)} \quad \text{and} \quad \sigma(t)=\|v_k+tw\|_2\]
    so that for small enough $|t|$, we get $f_t\in M_\delta$ and by using the fact that $w$ is tangent to $\mathcal{P}$ at $v_k$, we deduce that
\[
\sigma(t) \to 1, \quad \frac{1-\sigma(t)}{t} \to 0 \quad \text{and} \quad \frac{1 - \sigma(t)^2}{t} \quad \text{as} \ t \to 0.
\]
Taking $u=f_t$ in \eqref{eq7-1} and using the above limits we obtain
\begin{equation}\label{est-PS-cond-2-1}
     \begin{split}
    |\langle \tilde E_r'(v_k), w\rangle| &= \lim_{t \to 0} \frac{\tilde E_r(v_k) - \tilde E_r(v_k+tw)}{t} \\
    & \leq  \lim_{t \to 0} \left(\frac{1}{k} \frac{1}{\sigma(t)} \left\|v_k \frac{(1-\sigma(t))}{t} + w\right\| +  \left(\frac{1 - \sigma(t)^2}{t}\right) \frac{\tilde E_r(v_k +tw)}{\sigma(t)^2}\right)\\
    & \leq \frac{1}{k} \|w\|
    \end{split}
\end{equation}
for all $w \in \mathbb{H}(\Omega)$ such that $w$ is tangent to $\mathcal{P}$ at $v_k.$ Now, if $w$ is arbitrary in $\mathbb{H}(\Omega)$, we choose $\theta_k$ such that $w- \theta_k v_k$ is tangent to $\mathcal{P}$ at $v_k$ {\it i.e.}  $\theta_k = \int_{\Omega} v_k w ~dx.$ Replacing $w$ by $w-\theta_k v_k$ in \eqref{est-PS-cond-2-1}, we obtain
\[
\left|\langle \tilde E_r'(v_k), w\rangle - j_n \int_{\Omega} v_k w ~dx \right| \leq \frac{1}{k} \|w-\theta_k v_k\| \leq \eps_k \|w\|  \quad \text{where} \  j_n:= \langle \tilde E_r'(v_k), v_k \rangle \ \text{and} \ \eps_k \to 0 \ \text{as} \ k \to 0.
\]
Thus, $\|\tilde{E}_r'(v_k)\|_\ast \to 0$ in view of \eqref{def-deri-norm} and the desired claim. }
\end{proof}
By Lemmas \ref{PS-cond-critical} and \ref{MP-geo} and Propositions \ref{prop2} and \ref{prop3}, there exists a $\eps_0>0$ such that for any $\eps \in (0, \eps_0)$ such that $\|\varphi_1\|\geq \frac{\epsilon}{2}$, we have 
\begin{equation}\label{eq9}
    \inf\{\tilde E_r(u):~u\in \mathcal P, ~\|u-(-\varphi_1)\|=\epsilon\}> \max\{\tilde E_r(-\varphi_1),\tilde E_r(\varphi_1)\}.
\end{equation}
Next, we derive the existence of third critical point of the constrained energy functional $\tilde E_r.$
\begin{proposition}\label{mountain-third-critical-point}
The set 
\begin{equation}\label{def:path-set}
    \Gamma = \{\gamma \in C([-1,1], \mathcal P):~\gamma(-1)=-\varphi_1, ~\gamma(1)=\varphi_1\}
\end{equation}
is non empty and 
\begin{equation}\label{eq8}
    c(r) = \inf_{\gamma \in \Gamma } \max_{u\in \gamma [-1,+1]} E_r(u) >\lambda_1^L
\end{equation}
is a critical value of $\tilde E_r$. Moreover the same conclusion holds if \eqref{eq9} is replaced by the condition
\[\inf_{\gamma \in \Gamma } \max_{u\in \gamma [-1,+1]} E_r(u) > \max\{\tilde E_r(-\varphi_1),\tilde E_r(\varphi_1)\}.\]
\end{proposition}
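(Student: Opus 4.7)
The plan is to carry out three tasks in order: exhibit a continuous path joining $-\varphi_1$ and $\varphi_1$ inside $\mathcal{P}$, prove the strict mountain--pass inequality $c(r)>\lambda_1^L$, and then invoke the constrained Mountain Pass Theorem \cite[Proposition~2.5]{cuesta} to deduce that $c(r)$ is a critical value of $\tilde E_r$. The Palais--Smale condition and the local-minimum geometry around $-\varphi_1$ are already in hand from Lemma~\ref{PS-cond-critical} and \eqref{eq9}, so the work here is essentially topological.

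First, to see that $\Gamma\neq\emptyset$, I would pick any $\psi\in\mathbb H(\Omega)$ with $\|\psi\|_2=1$ and $\int_\Omega \varphi_1\psi\,dx=0$ (which exists since $\mathbb H(\Omega)$ is infinite dimensional; one may, for instance, take any eigenfunction associated with an eigenvalue $\lambda_k^L$, $k\geq 2$) and set
\[
\gamma(t):=\cos\!\left(\tfrac{\pi(1-t)}{2}\right)\varphi_1+\sin\!\left(\tfrac{\pi(1-t)}{2}\right)\psi,\qquad t\in[-1,1].
\]
The map $\gamma$ is continuous into $\mathbb H(\Omega)$, the $L^2$-orthogonality of $\varphi_1$ and $\psi$ gives $\|\gamma(t)\|_2=1$ for every $t$, and the endpoints are $\gamma(-1)=-\varphi_1$ and $\gamma(1)=\varphi_1$; hence $\gamma\in\Gamma$.

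Second, I would derive $c(r)>\lambda_1^L$ by a barrier argument using \eqref{eq9}. Fix $\epsilon\in(0,\epsilon_0)$ with $\|\varphi_1\|\geq \epsilon/2$. For any $\gamma\in\Gamma$ the map $t\mapsto\|\gamma(t)+\varphi_1\|$ is continuous on $[-1,1]$, vanishes at $t=-1$, and equals $2\|\varphi_1\|\geq\epsilon$ at $t=1$; the intermediate value theorem supplies $t^{\ast}\in(-1,1)$ with $\gamma(t^{\ast})$ on the sphere $S_\epsilon:=\{u\in\mathcal P:\|u+\varphi_1\|=\epsilon\}$. Consequently
\[
\max_{s\in[-1,1]} E_r(\gamma(s))\;\geq\;\tilde E_r(\gamma(t^{\ast}))\;\geq\;\inf_{u\in S_\epsilon}\tilde E_r(u),
\]
and taking the infimum over $\gamma\in\Gamma$ and using \eqref{eq9} together with $\tilde E_r(\varphi_1)=\lambda_1^L-r\leq\lambda_1^L=\tilde E_r(-\varphi_1)$ (recall $r\geq 0$) yields
\[
c(r)\;\geq\;\inf_{u\in S_\epsilon}\tilde E_r(u)\;>\;\max\{\tilde E_r(\varphi_1),\tilde E_r(-\varphi_1)\}\;=\;\lambda_1^L.
\]

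Finally, with the strict mountain--pass inequality $c(r)>\max\{\tilde E_r(\varphi_1),\tilde E_r(-\varphi_1)\}$ established and the Palais--Smale condition for $\tilde E_r$ on $\mathcal P$ delivered by Lemma~\ref{PS-cond-critical}, the constrained Mountain Pass Theorem \cite[Proposition~2.5]{cuesta} applies directly and produces a critical point $u^{\ast}\in\mathcal P$ of $\tilde E_r$ with $\tilde E_r(u^{\ast})=c(r)$. The variant stated at the end of the proposition is immediate: if \eqref{eq9} is replaced by the weaker assumption that the minimax value $c(r)$ itself strictly exceeds $\max\{\tilde E_r(\varphi_1),\tilde E_r(-\varphi_1)\}$, Step~2 becomes redundant and Steps~1 and~3 proceed unchanged. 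The only real analytic work is packaged in Lemma~\ref{PS-cond-critical}; the potential obstacle in the present argument is simply ensuring that every path in $\Gamma$ meets the barrier sphere $S_\epsilon$ in $\mathcal{P}$, which is precisely where \eqref{eq9} enters.
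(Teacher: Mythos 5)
Your argument is correct and follows essentially the same route as the paper: exhibit a path in $\Gamma$ to show it is nonempty, derive $c(r)>\lambda_1^L$ from the barrier inequality \eqref{eq9}, and conclude by invoking the constrained Mountain Pass Theorem \cite[Proposition 2.5]{cuesta} together with the Palais--Smale property of Lemma~\ref{PS-cond-critical}. Your trigonometric interpolation using an $L^2$-orthogonal $\psi$ is a cleaner way to produce a path with the correct endpoints than the paper's normalized linear interpolation (which, as printed with $(1-t)$ rather than $(1-|t|)$, does not actually hit $-\varphi_1$ at $t=-1$), and you make explicit the intermediate-value barrier-crossing step that the paper leaves implicit inside the cited mountain-pass result; otherwise the two proofs are the same.
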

\begin{proof}
    Let $\phi\in \mathbb H(\Omega)$ be such that it is not a multiple of $\varphi_1$, then consider the path 
    \[\gamma (t)= \frac{t\varphi_1+(1-t)\phi}{\|t\varphi_1+(1-t)\phi\|_2}.\]
    Clearly $\gamma \in \Gamma$ i.e. $\Gamma$ is non empty. Finally, by using \cite[Proposition 2.5]{cuesta} in view of \eqref{eq9} and \cite[Remark 2.6]{cuesta}, we conclude that $c(r)$ is a critical value of $\tilde E_r$ with
    \[c(r) > \max\{\tilde E_r(-\varphi_1),\tilde E_r(\varphi_1)\}=\lambda_1^L.\]
\end{proof}

We end this section with the following theorem which is a consequence of the Lemma \ref{lem1} and Proposition \ref{mountain-third-critical-point}:

\begin{theorem}\label{third-crit-pt}
    The points $(r+c(r),c(r))\in \Sigma_L$ for each $r\geq 0$, where $c(r)$ is defined in \eqref{eq8}.
\end{theorem}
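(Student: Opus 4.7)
The plan is to assemble Theorem \ref{third-crit-pt} as a direct corollary of the two preceding results: Proposition \ref{mountain-third-critical-point} supplies a critical point of $\tilde{E}_r$ at the mountain pass level $c(r)$, and Lemma \ref{lem1} translates any such critical point into a point of the Fu\v{c}\'{i}k spectrum $\Sigma_L$. No new analytic ingredient is needed; the work has already been absorbed into Proposition \ref{mountain-third-critical-point}.

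First I would fix $r \geq 0$ and invoke Proposition \ref{mountain-third-critical-point}. The proposition guarantees that the minimax value
\[
c(r) = \inf_{\gamma \in \Gamma}\, \max_{u \in \gamma[-1,1]} E_r(u)
\]
is a critical value of the constrained functional $\tilde{E}_r$ on $\mathcal{P}$. Hence there exists $u_r \in \mathcal{P}$ such that $u_r$ is a critical point of $\tilde{E}_r$ and $\tilde{E}_r(u_r) = c(r)$.

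Next, I would apply Lemma \ref{lem1} with $t = c(r)$. By that lemma, a critical point $u_r \in \mathcal{P}$ of $\tilde{E}_r$ with critical value $c(r)$ is equivalent to saying that $u_r$ solves the weak problem
\[
\mathcal{E}_L(u_r, v) = (r + c(r)) \int_\Omega u_r^+ v \, dx - c(r) \int_\Omega u_r^- v \, dx, \qquad \text{for all } v \in \mathbb{H}(\Omega),
\]
so $u_r$ is a nontrivial solution of $(P_{r+c(r),\, c(r)})$. By definition of $\Sigma_L$, this means $(r + c(r),\, c(r)) \in \Sigma_L$, which is exactly the claimed statement. Since $r \geq 0$ was arbitrary, the conclusion holds for every such $r$. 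The only step that might warrant a brief comment is the nontriviality of $u_r$, but this is automatic from $u_r \in \mathcal{P}$, which forces $\|u_r\|_2 = 1$.

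I do not foresee any real obstacle here: the heavy lifting, namely the Palais--Smale condition (Lemma \ref{PS-cond-critical}), the mountain pass geometry near $-\varphi_1$ (Lemma \ref{MP-geo} together with \eqref{eq9}), and the nonemptiness of $\Gamma$, has been carried out in Proposition \ref{mountain-third-critical-point}. The proof of Theorem \ref{third-crit-pt} is therefore a one-line synthesis of Lemma \ref{lem1} and Proposition \ref{mountain-third-critical-point}.
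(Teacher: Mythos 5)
Your proof is correct and coincides with the paper's own treatment: the paper explicitly presents Theorem \ref{third-crit-pt} as a direct consequence of Lemma \ref{lem1} and Proposition \ref{mountain-third-critical-point}, which is precisely the synthesis you carry out. Your added remark that nontriviality of $u_r$ follows from $u_r \in \mathcal{P}$ (so $\|u_r\|_2 = 1$) is a sensible small clarification but not a departure.
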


{ This yields for $r>0$ a third critical point $(r+c(r), c(r))$ in $\Sigma_L$ on the line parallel to the diagonal passing through $(r, 0)$.} Since, $\Sigma_L$ is symmetric with respect to diagonal in the $(\alpha, \beta)$ plane, so by virtue of Theorem \ref{third-crit-pt}, we define the curve below, which belongs to $\Sigma_L$ as
\[\mathcal C := \{(r+c(r),c(r)), (c(r),r+c(r)):~r\geq 0\}. \]

\section{First nontrivial curve}\label{section-nontrivial-curve}
The aim of this section is to show that the points of $\mathcal C$ are the first non trivial points with respect to given $r\geq 0$ which lies in the intersection of $\Sigma_L$ and the line $(r,0)+t(1,1)$ for any real $t$. In order to show this, we first prove the following lemma.
\begin{lemma}\label{lemma:lower-bounds}
The following statements hold true:
\begin{enumerate}
     \item[\textnormal{(i)}] $\rho_N \geq 0$ if $N \geq 2$ and $\rho_1 = -2 \gamma \approx - 1.154$ where $\gamma = - \Gamma'(1)$ is the Euler-Mascheroni constant.
    \item[\textnormal{(ii)}] $\lambda_1^L + c_N |\Omega| \geq 0$ for all $N \geq 1.$
\end{enumerate}
\end{lemma}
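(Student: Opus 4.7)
The plan is to handle (i) by invoking classical special values and monotonicity of the digamma function, and (ii) by combining the pointwise decomposition of $\mathcal{E}_L$ used in the proof of Lemma~\ref{lem:convexity} with a uniform pointwise lower bound on $h_\Omega + \rho_N$.

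For (i), I would substitute into $\rho_N = 2\ln 2 + \psi(N/2) - \gamma$ the classical special values $\psi(1/2) = -\gamma - 2\ln 2$ and $\psi(1) = -\gamma$. The $N=1$ case gives $\rho_1 = -2\gamma$ immediately. For $N \geq 2$, the strict monotonicity of $\psi$ on $(0,\infty)$, a consequence of $\psi'(x) = \sum_{n=0}^\infty (x+n)^{-2} > 0$, yields $\psi(N/2) \geq \psi(1) = -\gamma$, hence $\rho_N \geq 2(\ln 2 - \gamma) > 0$ (using $\ln 2 \approx 0.693 > 0.578 \approx \gamma$).

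For (ii), by the variational characterization of $\lambda_1^L$ recalled in the introduction, it suffices to show $\mathcal{E}_L(u,u) \geq -c_N|\Omega|$ for every $u \in \mathbb{H}(\Omega)$ with $\|u\|_2 = 1$. I would use the identity already verified inside the proof of Lemma~\ref{lem:convexity},
\[
\mathcal{E}_L(u,u) = \frac{c_N}{2}\int_\Omega\!\int_\Omega \frac{(u(x)-u(y))^2}{|x-y|^N}\,dx\,dy + \int_\Omega \bigl(h_\Omega(x) + \rho_N\bigr) u^2(x)\,dx,
\]
discard the non-negative Gagliardo-type double integral, and establish the uniform pointwise lower bound $h_\Omega(x) + \rho_N \geq -c_N|\Omega|$ for $x \in \Omega$. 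The two pieces of $h_\Omega$ admit simple estimates based on comparing $|x-y|^{-N}$ with $1$: since $|x-y|^{-N} \geq 1$ on $B_1(x)$ and $|x-y|^{-N} \leq 1$ on its complement,
\[
c_N\!\int_{B_1(x)\setminus\Omega}\!|x-y|^{-N}\,dy \geq c_N|B_1(x)\setminus\Omega|, \qquad c_N\!\int_{\Omega\setminus B_1(x)}\!|x-y|^{-N}\,dy \leq c_N|\Omega\setminus B_1(x)|.
\]
Applying the elementary measure identity $|B_1(x)\setminus\Omega| - |\Omega\setminus B_1(x)| = |B_1| - |\Omega|$ (valid for any translate of $B_1$), the $x$-dependent cross terms cancel and one arrives at $h_\Omega(x) \geq c_N(|B_1| - |\Omega|)$. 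A direct calculation using $c_N = \pi^{-N/2}\Gamma(N/2)$ and $|B_1| = 2\pi^{N/2}/(N\,\Gamma(N/2))$ gives the clean identity $c_N|B_1| = 2/N$, so that
\[
\lambda_1^L + c_N|\Omega| \geq \frac{2}{N} + \rho_N.
\]
The conclusion then follows by checking that $2/N + \rho_N \geq 0$ for every $N \geq 1$: for $N \geq 2$ both summands are non-negative by part (i), and for $N=1$ the value is $2 - 2\gamma > 0$.

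The main obstacle is the one-dimensional case, where $\rho_1 = -2\gamma < 0$ rules out the naive estimate obtained by merely bounding $h_\Omega(x) \geq -c_N|\Omega|$ (which would require discarding the positive integral in $h_\Omega$ and then adding a negative $\rho_1$). The argument above must retain both pieces of $h_\Omega$ and it relies on the tight numerical compensation $c_1|B_1| + \rho_1 = 2 - 2\gamma > 0$; the measure-theoretic cancellation $|B_1(x)\setminus\Omega| - |\Omega\setminus B_1(x)| = |B_1|-|\Omega|$ is precisely what eliminates the $x$-dependence and produces this compensation uniformly in $x \in \Omega$.
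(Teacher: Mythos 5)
Your part (i) is the same argument as the paper's, so no comment there. For part (ii), however, you take a genuinely different and more self-contained route. The paper invokes the eigenvalue lower bound $\lambda_1^L + d_N|\Omega| \geq 0$ with $d_N = 2\omega_{N-1}/(N^2(2\pi)^N)$ from Chen--V\'eron \cite{ChenVeron2023} and then reduces the claim to the elementary Gamma-function inequality $2^{N/2-1}N\,\Gamma(N/2)\ge 1$, i.e. to showing $c_N\ge d_N$. You instead bypass that external input entirely: starting from the decomposition $\mathcal{E}_L(u,u)=\frac{c_N}{2}\iint_{\Omega\times\Omega}\frac{(u(x)-u(y))^2}{|x-y|^N}\,dx\,dy+\int_\Omega(h_\Omega+\rho_N)u^2\,dx$ already recorded in the proof of Lemma~\ref{lem:convexity}, you drop the nonnegative double integral and prove the uniform pointwise bound $h_\Omega(x)+\rho_N\ge c_N\bigl(|B_1|-|\Omega|\bigr)+\rho_N$ by comparing $|x-y|^{-N}$ with $1$ on $B_1(x)$ and its complement and then using the elementary cancellation $|B_1(x)\setminus\Omega|-|\Omega\setminus B_1(x)|=|B_1|-|\Omega|$. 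Together with the clean identity $c_N|B_1|=2/N$ this yields $\lambda_1^L+c_N|\Omega|\ge 2/N+\rho_N$, which is a stronger, strictly positive lower bound than the claimed nonnegativity and which handles $N=1$ cleanly since $2+\rho_1=2-2\gamma>0$. Both approaches are correct; the paper's is shorter because it outsources the hard step to the cited reference, while yours is elementary, keeps the argument internal to the machinery already set up in Section~2, and in passing produces the sharper quantitative estimate $\lambda_1^L+c_N|\Omega|\ge 2/N+\rho_N$ that also identifies exactly why the $N=1$ case (where $\rho_1<0$) still works.
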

\begin{proof}
    By definition of $\rho_N$ and using $\psi\left(\frac{1}{2}\right) = -\gamma - 2 \ln 2$ and $\psi(1) = -\gamma$, we have
\[
\rho_1 = 2 \ln 2 + \psi\left(\frac{1}{2}\right) - \gamma = - 2\gamma,
\]
and 
    \[
    \rho_2 = 2 \ln 2 + \psi\left(1\right) -\gamma = 2 \ln 2 - 2 \gamma >0.
    \]
Now, for $N \geq 3$, using $\psi$ is an increasing function, we have
\[
\rho_N = 2 \ln 2 + \psi\left(\frac{N}{2}\right) -\gamma \geq  2 \ln 2 + \psi(1) -\gamma = 2 \ln 2 - 2 \gamma >0 \quad \text{for} \ N \geq 3.
\]   
By \cite[Theorem 1.2 (i)]{ChenVeron2023}, we have
\[
\lambda_1^L + d_N |\Omega| \geq 0, \quad \text{where} \quad d_N:= \frac{2 \omega_{N-1}}{N^2 (2 \pi)^N}, \ N \geq 1.
\]
Now, to show (ii), it is enough to show that $c_N \geq d_N$ for all $N \geq 1.$ 
\[
c_n \geq d_N \quad \Longleftrightarrow \quad \frac{2}{\omega_{N-1}} \geq \frac{2w_{N-1}}{N^2 (2 \pi)^N} \quad \Longleftrightarrow \quad N (2 \pi)^\frac{N}{2} \geq \omega_{N-1} = \frac{2 \pi^{\frac{N}{2}}}{\Gamma\left(\frac{N}{2}\right)} \quad \Longleftrightarrow \quad 2^{\frac{N}{2}-1} N \Gamma \left(\frac{N}{2}\right) \geq 1
\]
where the last inequality holds true for every $N \geq 1.$ 

\end{proof}
\begin{theorem}\label{isolated}
    There does not exist any sequence $(\alpha_k,\beta_k) \in \Sigma_L$ with $(\alpha_k,\beta_k)\in \Sigma_L$ satisfying $\alpha_k,\beta_k>\lambda_1^L$ converges to $(\lambda_1^L,\lambda_1^L).$ This means the lines $\lambda_1^L\times \mathbb R$ and $\mathbb R \times \lambda_1^L$ are isolated in $\Sigma_L$.
\end{theorem}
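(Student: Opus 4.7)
The plan is to argue by contradiction. Suppose there exists a sequence $(\alpha_k,\beta_k)\in \Sigma_L$ with $\alpha_k,\beta_k > \lambda_1^L$ and $(\alpha_k,\beta_k)\to (\lambda_1^L,\lambda_1^L)$, and let $u_k\in \mathbb{H}(\Omega)$ be a nontrivial solution of $(P_{\alpha_k,\beta_k})$ normalized by $\|u_k\|_2=1$. Testing the equation against $u_k$ itself gives $\mathcal{E}_L(u_k,u_k)=\alpha_k\|u_k^+\|_2^2+\beta_k\|u_k^-\|_2^2$, which is uniformly bounded; together with the standard coercive lower bound for $\mathcal{E}_L$ recalled in the proof of Lemma~\ref{lem3}, this forces $\{u_k\}$ to be bounded in $\mathbb{H}(\Omega)$. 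Using the compact embedding \eqref{compact-embed}, I would pass to a subsequence with $u_k\rightharpoonup u$ in $\mathbb{H}(\Omega)$, $u_k\to u$ in $L^2(\Omega)$ (so $\|u\|_2=1$), and $u_k\to u$ almost everywhere in $\Omega$. Since $\mathcal{E}_L(u_k,u_k)\to \lambda_1^L$ while $\mathcal{E}_L$ is weakly lower semicontinuous, $u$ achieves the infimum in the variational characterization of $\lambda_1^L$, and the simplicity of $\lambda_1^L$ yields $u=\pm\varphi_1$. Replacing $u_k$ by $-u_k$ if necessary, I may assume $u=\varphi_1$.

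Each $u_k$ must be sign-changing, since $u_k\geq 0$ (resp.\ $u_k\leq 0$) would make $u_k$ (resp.\ $-u_k$) a nontrivial nonnegative eigenfunction of $L_\Delta$ with eigenvalue $\alpha_k>\lambda_1^L$ (resp.\ $\beta_k>\lambda_1^L$), contradicting Corollary~\ref{pos-first-ef}. In particular $v_k:=u_k^-\not\equiv 0$. Since $\varphi_1>0$ a.e.\ in $\Omega$ and $u_k\to \varphi_1$ a.e., the indicator $\mathbf{1}_{\{u_k<0\}}\to 0$ a.e., so the bounded convergence theorem yields
\[
|\{v_k>0\}|=|\{u_k<0\}|\to 0.
\]
Thus $\{v_k\}$ satisfies every hypothesis of Lemma~\ref{lem3}, and consequently $\mathcal{E}_L(v_k,v_k)/\|v_k\|_2^2\to +\infty$.

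The second, competing, bound comes from testing the equation with $v_k=u_k^-$. Using $u_k^+u_k^-\equiv 0$ this gives $\mathcal{E}_L(u_k,u_k^-)=-\beta_k\|u_k^-\|_2^2$, and bilinearity yields $\mathcal{E}_L(u_k^-,u_k^-)=\beta_k\|u_k^-\|_2^2+\mathcal{E}_L(u_k^+,u_k^-)$. A direct computation of the mixed term via the integral representation \eqref{def:log-lap-ope}, exploiting $u_k^+u_k^-\equiv 0$ and the identity $k(z)+j(z)=|z|^{-N}$, collapses it to
\[
\mathcal{E}_L(u_k^+,u_k^-)=-c_N\iint_{\mathbb{R}^{2N}}\frac{u_k^+(x)\,u_k^-(y)}{|x-y|^N}\,dx\,dy\leq 0.
\]
Hence $\mathcal{E}_L(v_k,v_k)/\|v_k\|_2^2\leq \beta_k\to \lambda_1^L$, which contradicts the divergence from Lemma~\ref{lem3} and proves that the line $\lambda_1^L\times\mathbb{R}$ is isolated in $\Sigma_L$; the line $\mathbb{R}\times\lambda_1^L$ is handled by the symmetric argument starting from $u=-\varphi_1$ and testing against $u_k^+$.

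The step I anticipate as the main technical obstacle is the verification that the vanishing $|\{u_k<0\}|\to 0$ is actually activated in the borderline integrability regime of $\mathbb{H}(\Omega)$: unlike in \cite{cuesta} or \cite{GoyalSreenadh2014}, where an $L^r$ embedding with $r>2$ is used to upgrade convergence and control small sets, here I would need to invoke the Orlicz embedding of \cite[Theorem~2.2]{Arora-Giacomoni-Vaishnavi} into $L^\varphi$ with $\varphi(t)\approx t^2\log(e+t)$ to secure equi-integrability of $\{u_k\}$ and, if necessary, to legitimize Lemma~\ref{lem3} along the relevant subsequence. This Orlicz substitute for the missing better-than-$L^2$ bound is the key analytic ingredient required by the present setting.
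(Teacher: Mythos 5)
Your proposal is correct and takes a genuinely different route from the paper, and the final paragraph's worry about needing Orlicz embeddings is unfounded for your argument.

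The paper's proof, after reaching $u_k \to \pm\varphi_1$ in $L^2$ and $|\{u_k>0\}|\to 0$ (say), tests the equation with $u_k^+$ to obtain $\|u_k^+\|^2 \le d_k \int_\Omega (u_k^+)^2\,dx$ with $d_k$ bounded, and then invokes the optimal Orlicz embedding of $\mathbb{H}(\Omega)$ into $L^\phi$ with $\phi(t)\approx t^2\ln(e+t)$ together with Luxemburg/conjugate-function estimates (Claims~1 and 2 in the paper) to show this inequality is incompatible with $|\{u_k>0\}|\to 0$. You instead close the argument by feeding $v_k := u_k^-$ into Lemma~\ref{lem3}, concluding $\mathcal{E}_L(v_k,v_k)/\|v_k\|_2^2\to+\infty$, and then obtaining the competing bound $\mathcal{E}_L(u_k^-,u_k^-)/\|u_k^-\|_2^2 \le \beta_k \to \lambda_1^L$ from testing with $u_k^-$ together with the sign estimate $\mathcal{E}_L(u_k^+,u_k^-) = -c_N\iint u_k^+(x)u_k^-(y)|x-y|^{-N}\,dx\,dy \le 0$ (which you verify correctly using $u_k^+u_k^-\equiv 0$ and $k+j=|\cdot|^{-N}$). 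This is self-contained, cleaner, and uses Lemma~\ref{lem3} in a way the paper does not (the paper only uses it in Proposition~\ref{prop3}). Your approach sidesteps the Orlicz machinery entirely; the paper's approach gives a more quantitative rate at which the measure of $\{u_k>0\}$ is bounded away from zero, which may be of independent interest but is not needed for isolation. Two small notes: (i) the extraction of an a.e.-convergent subsequence and the passage $|\{u_k<0\}|\to 0$ need only the $L^2$ compact embedding and dominated convergence, and Lemma~\ref{lem3} as stated in the paper is already self-contained via compact embedding into $L^2$, so no equi-integrability or Orlicz upgrade is required — your hedge in the last paragraph can be dropped; (ii) your weak-lower-semicontinuity step, which uses $\alpha_k,\beta_k\to\lambda_1^L$ to conclude $\mathcal{E}_L(u,u)\le\lambda_1^L$, addresses only convergence to the corner $(\lambda_1^L,\lambda_1^L)$. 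The paper's proof actually handles the more general case $(\alpha_k,\beta_k)\to(\lambda_1^L,\beta)$ with $\beta$ possibly larger than $\lambda_1^L$, which is what is really needed for isolation of the whole line; to cover this you would replace the WLSC step with the paper's test-against-$u^+$ computation showing $\mathcal{E}_L(u^+,u^+)=\lambda_1^L\|u^+\|_2^2$, after which your subsequent contradiction argument applies verbatim.
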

\begin{proof}
    On contrary, assume that there exists a sequence $(\alpha_k,\beta_k) \in \Sigma_K$ with $\alpha_k,\beta_k>\lambda_1^L$ such that 
    \[(\alpha_k,\beta_k) \to (\alpha,\beta) \text{ with } \alpha=\lambda_1^L ~or~\beta=\lambda_1^L\]
    and $u_k$ denotes the corresponding solution of \eqref{1} with respect to $(\alpha_k,\beta_k)$, satisfying $\|u_k\|_2=1$. This gives 
    \begin{align*}
        \mathcal E_L(u_k,u_k) &= \alpha_k\int_\Omega (u_k^+)^2 ~dx + \beta_k\int_\Omega (u_k^-)^2 ~dx\\
        \implies \mathcal E(u_k,u_k) &\leq \alpha_k\int_\Omega (u_k^+)^2 ~dx+ \beta_k\int_\Omega (u_k^-)^2 ~dx + c_N \iint_{|x-y|\geq 1} \frac{u_k(x)u_k(y)}{|x-y|^N} ~dx ~dy
    \end{align*}
    from which it easily follows that $\{u_k\}$ is bounded in $\mathbb H(\Omega)$, since $\|u_k\|_2=1$. Thus $u_k\rightharpoonup u$ weakly in $\mathbb H(\Omega)$ and strongly in $L^2(\Omega)$, for some $u\in \h$. Without loss of generality, let $\alpha=\lambda_1^L$. { By using the $\text{(S)}$-property of the operator $L_\Delta$ (see, \cite[Lemma 3.3]{Arora-Hajaiej-Perera-2026}) as in Lemma \ref{PS-cond-critical}}, we deduce that $u$ weakly solves 
    \begin{equation}\label{eq10}
    L_\Delta u= \lambda_1^L u^+-\beta u^- ~\text{in}~ \Omega, ~u=0 ~\text{in}~\mathbb R^N \setminus \Omega.
    \end{equation}
    Testing the above problem with $u^+$ and using 
    \begin{equation}\label{use:ineq}
        (u(x)-u(y))(u^+(x)-u^+(y))= (u^+(x)-u^+(y))^2+u^+(x)u^-(y) +u^+(y)u^-(x)
    \end{equation}
    for any $(x,y)\in \mathbb R^2$ and symmetricity properties, we get
    \begin{align*}
        \frac{c_N}{2} &\iint_{|x-y|\leq 1}\frac{(u^+(x)-u^+(y))^2}{|x-y|^N}+c_N \iint_{|x-y|\leq 1}\frac{u^+(x)u^-(y)}{|x-y|^N} -c_N \iint_{|x-y|\geq 1} \frac{u^+(x)u^+(y)}{|x-y|^N} \\
        &\quad +c_N \iint_{|x-y|\geq 1}\frac{u^-(x)u^+(y)}{|x-y|^N}+\rho_N\int_\Omega(u^+)^2 =\lambda_1^L\int_\Omega(u^+)^2 \\
        \implies& \mathcal E_L(u^+,u^+)+ c_N\iint_{|x-y|\leq 1}\frac{u^+(x)u^-(y)}{|x-y|^N} +c_N \iint_{|x-y|\geq 1}\frac{u^-(x)u^+(y)}{|x-y|^N}=\lambda_1^L\int_\Omega(u^+)^2\\
        \implies & \mathcal E_L(u^+,u^+) \leq \lambda_1^L\int_\Omega(u^+)^2 \leq \mathcal E_L(u^+,u^+).
    \end{align*}
    Hence we have
    \[\mathcal E_L(u^+,u^+) = \lambda_1^L\int_\Omega(u^+)^2 \]
    which implies either $u^+ \equiv 0$ or $u^+ = \varphi_1$. In case $u^+\equiv 0$ {\it i.e.} $u\leq 0$, then \eqref{eq10} tells us that $u$ is a {non-positive eigenfunction of $L_\Delta$}. By Corollary \ref{pos-first-ef}, we conclude that $\beta = \lambda_1^L$ and $u=-\varphi_1$. Thus in any case 
    \[u_k \to \varphi_1 ~\text{or}~ -\varphi_1 ~\text{strongly in}~ L^2(\Omega).\]
    This implies as $k\to \infty,$
    \begin{equation}\label{eq11}
        \text{either}~ |\{x\in \Omega: ~u_k(x)<0\}| \to 0~ \text{or}~  |\{x\in \Omega: ~u_k(x)>0\}| \to 0.
    \end{equation}
    { Consider $|\{x\in \Omega: ~u_k(x)>0\}| \to 0.$ By Theorem \ref{thm-sign-changing}, $u_k$ changes sign if $(\alpha_k,\beta_k) \in \Sigma_K$ with $\alpha_k,\beta_k>\lambda_1^L$}. We argue now to contradict \eqref{eq11}. Since, $u_k$ satisfies \eqref{1} weakly, using $u_k^+$ as test function, we obtain
    \[
    \begin{split}
        \|u_k^+\| = \mathcal E(u_k^+,u_k^+) & \leq \alpha_k\int_\Omega(u_k^+)^2 ~dx + c_N \iint_{|x-y|\leq 1}\frac{u_k^+(x)u_k^+(y)}{|x-y|^N} ~dx ~dy - \min\{\rho_N, 0\} \int_\Omega u_k^2 ~dx \\
        & \leq (\alpha_k + c_N |\Omega| - \min\{\rho_N, 0\})  \int_\Omega(u_k^+)^2.
    \end{split}    
    \]
By Lemma \ref{lemma:lower-bounds}, we have $\lambda_1^L + c_N |\Omega| - \min\{\rho_N, 0\} >0$. Now, by choosing $k$ large enough such that $d_k:= (\alpha_k + c_N |\Omega| - \min\{\rho_N, 0\}) >0$ and by applying H\"older inequality, we obtain
\begin{equation}\label{lower:est-1}
       \|u_k^+\|^2 = \mathcal E(u_k^+,u_k^+) \leq (\alpha_k + c_N |\Omega| - \min\{\rho_N, 0\})  \int_\Omega(u_k^+)^2 \leq 2 d_k \|(u_k^+)^2\|_{\psi,\Omega} \ \|{\bf 1}_{\{u_k(x) >0\}}\|_{\psi^\ast,\Omega}
    \end{equation}  
where $\psi:[0, \infty) \to [0, \infty)$ such that 
\begin{equation}\label{modu-condi-1}
    \psi(t) :=  t \ln(e+t^\frac{1}{2}), 
\quad \psi(t^2) \leq \phi(t):=t^2 \ln(e+t) \ \text{for all} \ t \geq 0 \quad \text{and} \quad { \frac{\psi^{-1}(t)}{t} \to 0 \ \text{as} \ t \to +\infty},
\end{equation}
$\psi^\ast$ is a conjugate function of $\psi$, and $\|\cdot\|_{\ph,\Omega}$ represents the Luxemburg norm defined by (see \cite[Definition 3.2.1]{Harjulehto-Hasto-2019})
	\[
		\|u\|_{\ph, \Omega} = \inf \bigg\{ \lambda > 0 \,:\, \varrho_\ph \l( \frac{u}{\lambda} \r)  \leq 1 \bigg\}.
\]
\textbf{Claim 1:} $\|(u_k^+)^2\|_{\psi,\Omega} \leq \|u_k^+\|_{\phi, \Omega}^2 \leq C \|u_k^+\|^2$ {for some $C>0.$}
\newline
Denote 
\[ v_k:= \frac{u_k^+}{\|u_k^+\|_{\phi, \Omega}} \quad \text{such that} \quad \|v_k\|_{\phi, \Omega} =1.
\]
By using the unit ball property of the norm (see \cite[Lemma 3.2.3]{Harjulehto-Hasto-2019}), \eqref{modu-condi-1} and \cite[Theorem 3.6]{Arora-Giacomoni-Vaishnavi}, we have
\[
\int_{\Omega} \phi(v_k) ~dx \leq 1 \quad \Longrightarrow \quad \int_{\Omega} \psi(v_k^2) ~dx \leq 1 \quad \Longrightarrow \quad \|v_k^2\|_{\psi, \Omega} \leq 1 \quad \Longrightarrow \quad \|(u_k^+)^2\|_{\psi,\Omega} \leq \|u_k^+\|_{\phi, \Omega}^2 \leq { C\|u_k^+\|^2}.
\]
\textbf{Claim 2:} $\|{\bf 1}_{\{u_k(x) >0\}}\|_{\psi^\ast,\Omega} = \frac{1}{(\psi^\ast)^{-1}\left(|\{u_k(x) >0\}|^{-1}\right)} \leq C |\{u_k(x) >0\}| \ \psi^{-1}\left(|\{u_k(x) >0\}|^{-1}\right) $
\newline
By the definition of Luxemburg norm and \cite[Proposition 2.4.9 and Theorem 2.4.8]{Harjulehto-Hasto-2019}, we obtain
\[
\begin{split}
    \|{\bf 1}_{\{u_k(x) >0\}}\|_{\psi^\ast,\Omega} &= \inf \left\{\lambda: \int_\Omega \psi^\ast\left(\frac{{\bf 1}_{\{u_k(x) >0\}}}{\lambda}\right) ~dx \leq 1\right\} = \inf\left\{ \lambda: \psi^\ast\left( \frac{1}{\lambda}\right) |\{u_k(x) >0\}| \leq 1 \right\}\\
    & = \inf\left\{ \lambda: \psi^\ast \left(\frac{1}{\lambda}\right)  \leq \frac{1}{|\{u_k(x) >0\}|}\right\} = \inf\left\{ \lambda: \frac{1}{\lambda}  \leq (\psi^\ast)^{-1}\left(|\{u_k(x) >0\}|^{-1}\right)\right\}\\
    & = \frac{1}{(\psi^\ast)^{-1}\left(|\{u_k(x) >0\}|^{-1}\right)} 
    \leq C_1 |\{u_k(x) >0\}| \ \psi^{-1}\left(|\{u_k(x) >0\}|^{-1}\right)
\end{split}
\]
{for some $C_1>0.$} Now, by using \eqref{eq11}, \textbf{Claim 1} and \textbf{Claim 2} in \eqref{lower:est-1}, { \eqref{modu-condi-1}}, the definition of $\psi$ and the fact that $d_k>0$ is bounded above, we obtain
\[
1 \leq 2C_0 d_k |\{u_k(x) >0\}| \ \psi^{-1}\left(|\{u_k(x) >0\}|^{-1}\right) \to 0 \ \text{as} \ k \to \infty
\]
which is a contradiction. { By the repeating the same arguments as above for the case $|\{x\in \Omega: ~u_k(x)<0\}| \to 0$ by testing the equation \eqref{1} with $u_k^-$, we will obtain a contradiction. Hence, the claim.}
\end{proof}
By analogy with Lemma 3.5 in \cite{cuesta}, we now formulate the following topological result concerning $\mathcal{P}$.
\begin{lemma}\label{topo-1}
   The following holds concerning $\mathcal P$-
   \begin{enumerate}
       \item[(i)] $\mathcal P$ is locally arcwise connected.
       \item[(ii)] Any open connected subset $\mathcal O$ of $\mathcal P$ is arcwise connected.
       \item[(iii)] If $\mathcal O'$ is a connected component of an open subset $\mathcal O$ of $\mathcal P$ then $\partial \mathcal O' \cap \mathcal{O} = \emptyset$,
   \end{enumerate}
\end{lemma}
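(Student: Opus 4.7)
My plan is to reduce everything to item (i); the other two items are general facts about locally arcwise connected topological spaces. The only piece of genuine analysis that enters is the continuous embedding $\mathbb{H}(\Omega) \hookrightarrow L^2(\Omega)$ from \eqref{compact-embed}, which makes the $L^2$-norm continuous on $\mathbb{H}(\Omega)$ and hence the retraction $\mathcal{R}(w) := w/\|w\|_2$ continuous in the $\mathbb{H}$-norm on the open set $\{w \in \mathbb{H}(\Omega) : \|w\|_2 > 0\}$.

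For (i), I would fix $u \in \mathcal{P}$, choose $\delta \in (0, 1/2)$, and use the continuity of the embedding to pick an $\mathbb{H}$-open neighborhood $U$ of $u$ in $\mathcal{P}$ on which $\|w - u\|_2 < \delta$. Given any $w_1, w_2 \in U$, the linear segment $\sigma(t) := tw_1 + (1-t)w_2$ satisfies
\[
\|\sigma(t) - u\|_2 \leq t \|w_1 - u\|_2 + (1-t)\|w_2 - u\|_2 < \delta,
\]
so $\|\sigma(t)\|_2 \geq 1 - \delta > 0$ for every $t \in [0,1]$, and the composition $\gamma := \mathcal{R} \circ \sigma$ is a continuous arc in $\mathcal{P}$ joining $w_1$ and $w_2$. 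This establishes local arcwise connectedness.

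For (ii), I would run the standard argument in locally arcwise connected spaces: pick $u_0 \in \mathcal{O}$, let $A \subset \mathcal{O}$ be the set of points joined to $u_0$ by an arc lying in $\mathcal{O}$, and verify that both $A$ and $\mathcal{O} \setminus A$ are open in $\mathcal{O}$ by concatenating an existing arc with a small arcwise connected neighborhood supplied by (i) and contained in $\mathcal{O}$ (possible because $\mathcal{O}$ is open in $\mathcal{P}$). Connectedness of $\mathcal{O}$ then forces $A = \mathcal{O}$. For (iii), I would invoke that (i) implies local connectedness of $\mathcal{P}$, so every connected component $\mathcal{O}'$ of the open set $\mathcal{O}$ is itself open in $\mathcal{P}$; since components are always closed in $\mathcal{O}$, we get $\overline{\mathcal{O}'} \cap \mathcal{O} = \mathcal{O}'$, and because $\mathcal{O}'$ is open in $\mathcal{P}$ the boundary satisfies $\partial \mathcal{O}' = \overline{\mathcal{O}'} \setminus \mathcal{O}'$, whence $\partial \mathcal{O}' \cap \mathcal{O} = \emptyset$.

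I do not expect any serious obstacle. The only subtlety worth flagging is that the topology on $\mathcal{P}$ is the one induced by the $\mathbb{H}$-norm rather than the $L^2$-norm; this is precisely what makes the continuous $L^2$-embedding indispensable, since it ensures that the retraction $\mathcal{R}$, which is the key building block of the argument, is continuous in the ambient topology of $\mathcal{P}$.
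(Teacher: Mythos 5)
The paper itself does not prove Lemma~\ref{topo-1}; it states it ``by analogy with Lemma~3.5 of \cite{cuesta}'' and gives no argument. Your proposal supplies the missing details via the standard radial retraction $\mathcal{R}(w) = w/\|w\|_2$, which is continuous on $\{w \in \mathbb{H}(\Omega): \|w\|_2 > 0\}$ in the $\mathbb{H}$-topology thanks to the embedding \eqref{compact-embed}. This is the right idea and almost surely what Cuesta--de Figueiredo--Gossez do, and parts (ii) and (iii) are carried out correctly once (i) is in hand.

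However, as written, your proof of (i) establishes only a weaker statement than local arcwise connectedness. You show that each $u$ has a neighborhood $U$ whose points are joined to one another by arcs lying somewhere in $\mathcal{P}$; but local arcwise connectedness requires a neighborhood \emph{base} of arcwise connected open sets, i.e.\ the arcs must be made to stay inside an arbitrarily prescribed small $\mathbb{H}$-neighborhood of $u$. This stronger version is precisely what you invoke in (ii) (``a small arcwise connected neighborhood supplied by (i) and contained in $\mathcal{O}$'') and in (iii) (where you need local \emph{connectedness} of $\mathcal{P}$ to conclude that components of open sets are open). The fix is short: take $U = \{w \in \mathcal{P}: \|w - u\| < \eps\}$, and observe that for $w_1, w_2 \in U$, with $\sigma(t) = t w_1 + (1-t)w_2$ and $C$ the embedding constant, one has $|1 - \|\sigma(t)\|_2| \le \|\sigma(t)-u\|_2 \le C\eps$, hence
\[
\|\gamma(t) - u\| \;\le\; \left|\frac{1}{\|\sigma(t)\|_2} - 1\right| \|\sigma(t)\| + \|\sigma(t) - u\| \;\le\; \frac{C\eps}{1-C\eps}\left(\|u\| + \eps\right) + \eps,
\]
so $\gamma$ stays in an $\mathbb{H}$-ball about $u$ of radius $O(\eps)$. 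From this two-ball estimate the arc component of $u$ inside any given ball is open and constitutes the required small arcwise connected neighborhood. With that addition your argument is complete and genuinely different only in being more explicit than the paper's citation.
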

Our next result can also be done along the lines of proof of Lemma 3.6 of \cite{cuesta} by replacing $\|.\|_{1,p}$ with $\|.\|$, $\tilde J_s$ with $\tilde E_r$ and defining $\mathcal O$ as
\[\mathcal O := \{u\in \mathcal P:~\tilde E_r(u)<\alpha\}\]
for some real $\alpha$.

\begin{lemma}\label{topo-2}
    Any connected component of $\mathcal O$ contains a critical point of $\tilde E_r$.
    \end{lemma}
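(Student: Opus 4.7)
The plan is to apply Ekeland's variational principle to $\tilde E_r$ on the closed subset $\overline{\mathcal{O}'}\cap\mathcal{P}$ of $\mathcal{P}$, to produce an approximate minimizing sequence at the infimum level of $\tilde E_r$ over a connected component $\mathcal{O}'$, and then invoke the Palais--Smale condition from Lemma \ref{PS-cond-critical}, together with the topological input of Lemma \ref{topo-1}(iii), to pass to the limit.

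More precisely, fix a connected component $\mathcal{O}'$ of $\mathcal{O}$. Since Proposition \ref{prop2} yields the global lower bound $\tilde E_r(u) \geq \lambda_1^L - r$ on $\mathcal{P}$, the quantity
\[
c \;:=\; \inf_{u\in \mathcal{O}'} \tilde E_r(u)
\]
is finite, and obviously $c < \alpha$ because $\mathcal{O}'\neq\emptyset$ and every element of $\mathcal{O}'$ lies in $\mathcal{O}$. Choose a minimizing sequence $\{u_k\}\subset\mathcal{O}'$ with $\tilde E_r(u_k) \leq c + 1/k^2$. Applying Ekeland's variational principle to $\tilde E_r$ on the complete metric space $\overline{\mathcal{O}'}\cap\mathcal{P}$ (with the distance induced by $\|\cdot\|$), I obtain $v_k \in \overline{\mathcal{O}'}\cap\mathcal{P}$ satisfying $\tilde E_r(v_k)\to c$, $\|v_k-u_k\|\leq 1/k$, and
\[
\tilde E_r(v_k) \;\leq\; \tilde E_r(w) + \tfrac{1}{k}\,\|w-v_k\|,\qquad \forall\, w\in \overline{\mathcal{O}'}\cap\mathcal{P}.
\]

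For $k$ large enough, $\tilde E_r(v_k) < \alpha$, hence $v_k \in \mathcal{O}\cap\overline{\mathcal{O}'}$. Lemma \ref{topo-1}(iii) gives $\partial\mathcal{O}'\cap\mathcal{O} = \emptyset$, so $v_k$ in fact lies in the open set $\mathcal{O}'\subset\mathcal{P}$. Consequently the Ekeland inequality holds for every $w$ in a $\mathcal{P}$-neighbourhood of $v_k$, and the same tangent-space computation carried out in the proof of Lemma \ref{MP-geo} (choosing curves of the form $f_t = (v_k+tw)/\|v_k+tw\|_2$ with $w$ tangent to $\mathcal{P}$ at $v_k$, then removing the tangency by subtracting the appropriate multiple of $v_k$) converts it into the derivative estimate $\|\tilde E_r'(v_k)\|_\ast \leq 1/k$. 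Thus $\{v_k\}$ is a Palais--Smale sequence for $\tilde E_r$ at the level $c$. Lemma \ref{PS-cond-critical} provides a subsequence converging in $\mathbb{H}(\Omega)$ to some $v$, which by continuity satisfies $v\in\mathcal{P}$, $\tilde E_r(v) = c < \alpha$, and $v\in\overline{\mathcal{O}'}$; a second use of Lemma \ref{topo-1}(iii) forces $v \in \mathcal{O}'$, and by construction $v$ is a critical point of $\tilde E_r$.

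The only delicate step I anticipate is the one where the Ekeland approximate minimizers $v_k$ must be shown to lie in the open set $\mathcal{O}'$ rather than on $\partial\mathcal{O}'$: without this, the derivative bound $\|\tilde E_r'(v_k)\|_\ast \leq 1/k$ cannot be read off from the Ekeland inequality, since $w$ would then be restricted to a one-sided family of admissible directions. This is precisely where the topological property $\partial\mathcal{O}'\cap\mathcal{O}=\emptyset$ proved in Lemma \ref{topo-1}(iii) is indispensable, and the same property is reused at the end to guarantee that the limit point does not escape $\mathcal{O}'$.
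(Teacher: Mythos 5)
Your proof is correct and follows exactly the route the paper intends: the paper does not write out the argument but refers to Lemma~3.6 of \cite{cuesta}, which is precisely this Ekeland-plus-Palais--Smale scheme, with Lemma~\ref{topo-1}(iii) used twice (once to get the Ekeland points $v_k$ into the open component $\mathcal{O}'$ so that the derivative bound $\|\tilde E_r'(v_k)\|_\ast \leq 1/k$ follows, and once to keep the limit in $\mathcal{O}'$). Your identification of the delicate step is also the right one.
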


We now finish this section by proving the main result of this section below.

\begin{theorem}\label{mainthrm-1}
    For any $r\geq 0$, the point $(r+c(r), c(r))$ is the first non trivial point of $\Sigma_L$ which lies in the intersection of $\Sigma_L$ and the line parallel to the diagonal through $(r,0)$.
\end{theorem}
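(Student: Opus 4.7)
The plan is to argue by contradiction. Suppose that some $(r+s,s) \in \Sigma_L$ lies strictly between the trivial horizontal line $\mathbb{R}\times\lambda_1^L$ and the mountain-pass level, i.e.\ $\lambda_1^L < s < c(r)$, and let $v \in \mathcal{P}$ be a corresponding normalized solution. The goal is to construct a path $\gamma \in \Gamma$ along which $\tilde{E}_r \leq s$, contradicting $c(r) > s$ via \eqref{eq8}. Since $v$ weakly solves $L_\Delta v = a(x)v$ with $a(x) := s + r\,\mathbf{1}_{\{v > 0\}} \geq s > \lambda_1^L$ on $\Omega$, Theorem \ref{thm-sign-changing} forces $v$ to be sign-changing, so that $p := \|v^+\|_2^2 > 0$ and $q := \|v^-\|_2^2 > 0$ with $p+q=1$.

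Testing the equation against $v^{\pm}$ and using $v^+ v^- \equiv 0$, a direct expansion of the quadratic form \eqref{quadratic-form} gives
\[
\mathcal{E}_L(v^+, v^-) = -A, \qquad A := c_N \iint_{\mathbb{R}^{2N}} \frac{v^+(x)\,v^-(y)}{|x-y|^N}\,dx\,dy > 0,
\]
since the cross terms coming from $k(x-y)$ and $j(x-y)$ recombine into the full kernel $|x-y|^{-N}$. From this and the weak formulation I would then extract the two further identities $\mathcal{E}_L(v^+, v^+) = (r+s)p - A$ and $\mathcal{E}_L(v^-, v^-) = sq - A$. The cross-term $A > 0$ is the ``nonlocal tail'' flagged in the introduction and is what ultimately forces the strict inequalities below.

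The contradictory path $\gamma \in \Gamma$ from $-\varphi_1$ to $\varphi_1$ will be built as a concatenation of three pieces:
(a) from $-\varphi_1$ to $-v^-/\sqrt{q}$, via $-\sigma^{(1)}_t := -(t(v^-)^2/q + (1-t)\varphi_1^2)^{1/2}$ supplied by Lemma \ref{lem:convexity};
(b) from $-v^-/\sqrt{q}$ to $v^+/\sqrt{p}$, along the explicit path
\[
\gamma(\sigma) := \frac{(1-\sigma)v^+ - \sigma v^-}{\|(1-\sigma)v^+ - \sigma v^-\|_2}, \qquad \sigma \in [0,1];
\]
(c) from $v^+/\sqrt{p}$ to $\varphi_1$, mirroring (a). Expanding $E_r(u)$ along (b) with $u = av^+ - bv^-$ and inserting the identities above produces
\[
E_r(u) = s(a^2 p + b^2 q) - (a-b)^2 A, \qquad \|u\|_2^2 = a^2 p + b^2 q,
\]
hence
\[
\tilde{E}_r(u/\|u\|_2) = s - \frac{(a-b)^2 A}{a^2 p + b^2 q} \leq s,
\]
with equality only at $\sigma = 1/2$ (where $\gamma(1/2) = v$). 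Along (a) and (c), the geodesic convexity of Lemma \ref{lem:convexity}, combined with $(\sigma_t)^+ = \sigma_t$ or $(-\sigma_t)^+ \equiv 0$ and $\|\sigma_t\|_2 = 1$, yields $\tilde{E}_r \leq \max\{\lambda_1^L - r,\,s - A/p\}$ on (c) and $\tilde{E}_r \leq \max\{\lambda_1^L,\,s - A/q\}$ on (a); since $A > 0$ and $s > \lambda_1^L$, both maxima are strictly less than $s$. Concatenating the three pieces gives $\max_{t\in[-1,1]}\tilde{E}_r(\gamma(t)) \leq s < c(r)$, the desired contradiction.

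The main obstacle I expect is technical rather than conceptual: ensuring that the geodesic convexity curve $\sigma_t$ is actually continuous as a map $[0,1] \to \mathbb{H}(\Omega)$, since Lemma \ref{lem:convexity} only controls $\mathcal{E}_L(\sigma_t,\sigma_t)$ pointwise in $t$, and carefully tracking all the $j$-kernel and $\rho_N \int uv$ contributions in $\mathcal{E}_L$ so that the strict gap provided by $A > 0$ is preserved. I would resolve the continuity through an $\varepsilon$-regularization $\sigma_t^\varepsilon := (tu^2+(1-t)w^2+\varepsilon^2)^{1/2}$, modeled on the device already used in the proof of Theorem \ref{thm-sign-changing}, followed by passing to the limit $\varepsilon \to 0$. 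The denominator in (b) stays uniformly bounded away from $0$, because $(1-\sigma)v^+ = \sigma v^-$ with $v^+ v^- \equiv 0$ would force $v \equiv 0$, contradicting our standing hypothesis.
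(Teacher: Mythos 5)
Your proof is correct but follows a genuinely different route from the paper's, and the difference is worth noting. The paper decomposes the contradictory path into linear-combination pieces $\gamma_1,\gamma_2,\gamma_3$ connecting $u$ to $u^-/\|u^-\|_2$ (much like your piece (b)), but then resorts to a topological argument to close the loop from $u^-/\|u^-\|_2$ to $\varphi_1$: it invokes Lemma \ref{topo-1} and Lemma \ref{topo-2} (connected components of the sublevel set $\mathcal{O}=\{u\in\mathcal P:\tilde E_r(u)<\theta-r\}$ contain critical points), a regular-point argument near $u^-/\|u^-\|_2$, and a symmetry trick $\gamma_4\mapsto -\gamma_4$ to handle the other half; and to make that machinery work it must take $\theta$ \emph{minimal}, so that $\pm\varphi_1$ are the only possible critical points in $\mathcal{O}$. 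You avoid all of that by running the geodesic convexity of $W=\mathcal E_L(\cdot,\cdot)$ (Lemma \ref{lem:convexity}) in a direction the paper uses only for Theorem \ref{thm-sign-changing}: the curves $t\mapsto\bigl(t(v^\pm)^2/\|v^\pm\|_2^2+(1-t)\varphi_1^2\bigr)^{1/2}$ stay on $\mathcal P$, are nonnegative so the $(\cdot)^+$-term behaves trivially, and the convexity inequality immediately bounds $\tilde E_r$ by the linear interpolation between $\lambda_1^L$ (or $\lambda_1^L-r$) and $\mathcal E_L(v^\pm,v^\pm)/\|v^\pm\|_2^2\leq s$. This is shorter, dispenses with the minimality of $\theta$ and with Lemmas \ref{topo-1}--\ref{topo-2}, and produces explicit paths. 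What the paper's approach buys is closer alignment with the classical $p$-Laplacian and fractional-Laplacian treatments in \cite{cuesta,GoyalSreenadh2014}, where the topological scheme is standard; your variant is more self-contained for this operator. One remark on the continuity issue you flag: the $\varepsilon$-regularization is not needed. The pointwise inequality $(\sigma_t(x)-\sigma_t(y))^2\leq t(u(x)-u(y))^2+(1-t)(w(x)-w(y))^2$ from the proof of Lemma \ref{lem:convexity} provides a $t$-uniform dominating function in $L^1$ of the kernel $k(x-y)$; since $\sigma_{t_n}\to\sigma_t$ pointwise as $t_n\to t$, dominated convergence already gives $\sigma_{t_n}\to\sigma_t$ in $\mathbb H(\Omega)$. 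Your identities $\mathcal E_L(v^+,v^+)=(r+s)p-A$, $\mathcal E_L(v^-,v^-)=sq-A$, and $\mathcal E_L(v^+,v^-)=-A$ agree with the paper's relations \eqref{est:+-} (with $A=h(u^+,u^-)$), and the computation $\tilde E_r(\gamma(\sigma))=s-(a-b)^2A/(a^2p+b^2q)$ is exactly right; note, though, that strict positivity of $A$ is not actually needed, since $\tilde E_r\leq s<c(r)$ already furnishes the contradiction with \eqref{eq8}.
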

\begin{proof}
    On contrary, let $(r+\theta, \theta)$ lies in $\Sigma_L$ for $\theta \in (\lambda_1^L, c(r))$ and $\theta$ is minimum quantity satisfying this. This choice of $\theta$ is possible due to Theorem \ref{isolated} and {$\Sigma_L$ being closed}. Our aim is to construct a path in $\Gamma$ on which $\tilde E_r(\cdot) \leq \theta$ which shall be a contradiction, owing to the definition of $c(r)$ in \eqref{eq8}.

    We first observe that our contrary hypothesis tells us that $\tilde E_r$ has a critical value $\theta \in (\lambda_1^L, c(r)))$ and $\tilde E_r$ has no critical value in $(\lambda_1^L, \theta)$. Let $u\in \mathcal P$ be the critical point of $\tilde E_r$ {\it w.r.t.} critical value $\theta$. Then, $u$ changes sign by {Theorem \ref{thm-sign-changing}} and  solves 
    \[\mathcal E_L(u,v)= (r+\theta)\int_\Omega u^+v-\theta\int_\Omega u^-v, \quad \text{for all} ~ v \in \mathbb H(\Omega).\]
Taking $v=u^+$ and $v=u^-$, we get 
    \begin{align}
        \mathcal{E}_L(u,u^+) &= (r+\theta)\int_\Omega (u^+)^2 ~dx \label{mt1-1}\\
         \mathcal{E}_L(u,u^-) &= -\theta\int_\Omega (u^-)^2 ~dx.\label{mt1-2}
    \end{align}
    On simplifying them, we obtain
    \begin{align}\label{est:+-}
         \mathcal E_L(u^+,u^+)+ h(u^+,u^-) =(r+\theta)\int_\Omega(u^+)^2 ~dx \quad \text{and} \quad \mathcal E_L(u^-,u^-)+ h(u^+,u^-) =\theta\int_\Omega(u^-)^2 ~dx.
    \end{align}
where
    \[h(u^+,u^-) =  c_N\iint_{|x-y|\leq 1}\frac{u^+(x)u^-(y)}{|x-y|^N} ~dx ~dy +c_N \iint_{|x-y|\geq 1}\frac{u^-(x)u^+(y)}{|x-y|^N} ~dx ~dy \geq 0,\]
Moreover, we get
    \begin{align*}
        \tilde E_r(u)& =\theta\\
        \tilde E_r\left(\frac{u^+}{\|u^+\|_{2}}\right) &= \theta - \frac{h(u^+,u^-)}{\|u^+\|_{2}^2} = \tilde E_r\left(\frac{-u^-}{\|u^-\|_{2}^2}\right)\leq \theta \\
        \tilde E_r\left(\frac{u^-}{\|u^-\|_{2}}\right) &= \theta -r- \frac{h(u^+,u^-)}{\|u^-\|_{2}^2}\leq \theta -r.
    \end{align*}
Next, we consider three paths in $\mathcal{P}$ which go respectively from $u$ to $\frac{u^+}{\|u^+\|_{2}^2}$, from $\frac{u^+}{\|u^+\|_{2}^2}$ to $\frac{u^-}{\|u^-\|_{2}^2}$, and from $\frac{-u^-}{\|u^-\|_{2}^2}$ to $u$ via:
\begin{align*}
        & \gamma_1(t)= \frac{tu^+ + (1-t)u}{\|tu^+ + (1-t)u\|_2}= \frac{u^+-(1-t)u^-}{\|u^+ - (1-t)u^-\|_2}, \quad \gamma_1(0) = u, \quad \gamma_1(1) = \frac{u^+}{\|u^+\|_{2}^2},\\
        & \gamma_2(t)= \frac{u^+ - tu}{\|t u^-+(1-t)u^+\|_2} = \frac{t u^-+(1-t)u^+}{\|t u^-+(1-t)u^+\|_2},\quad \gamma_2(0)= \frac{u^+}{\|u^+\|_{2}^2}, \quad \gamma_2(1) = \frac{u^-}{\|u^-\|_{2}^2}, \\
        &\gamma_3(t) = \frac{t u -(1-t) u^-}{\|t u -(1-t) u^-\|_2} =\frac{tu^+-u^-}{\|tu^+-u^-\|_2} , \quad \gamma_3(0) = \frac{-u^-}{\|u^-\|_2} , \quad \gamma_3(1) = u.
    \end{align*}
Next, we examine the levels of $\tilde E_r$ along all the paths $\gamma_i$ for $i=1,2$ and $3.$ separately below. \newline
\textbf{Estimates for path $[0,1] \ni t \mapsto \gamma_1(t):$}
\begin{equation}\label{est:path-3}
    \begin{split}
  \tilde E_r(\gamma_1(t)) \|tu^+ + (1-t)u\|_2^2 &= \mathcal{E}\l(u^+-(1-t)u^-,u^+-(1-t)u^-\r) \\
  & \quad - c_N \iint_{|x-y|\geq 1} \frac{(u^+-(1-t)u^-)(x) (u^+-(1-t)u^-)(y)}{|x-y|^N}  ~dx ~dy \\
  & \quad + \rho_N \int_{\Omega} (u^+-(1-t)u^-)^2 ~dx - r \int_\Omega (u^+-(1-t)u^-)^2 ~dx. 
\end{split}
\end{equation}
Note that
\[
\begin{split}
    & \l[(u^+-(1-t)u^-)(x) - (u^+-(1-t)u^-)(y)\r]^2 = \l[(u^+(x) - u^+(y)) - (1-t)(u^-(x) - u^-(y)) \r]^2\\
    & \qquad = \l[u^+(x) - u^+(y)\r]^2 + (1-t)^2\l[u^-(x) - u^-(y)\r]^2 +{ 2  (1-t) \l[u^+(x) u^-(y) + u^-(x) u^+(y)\r]},
\end{split}
\]
\[
\begin{split}
  (u^+-(1-t)u^-)(x) &(u^+-(1-t)u^-)(y) \\
  &= u^+(x) u^+(y) + (1-t)^2u^-(x) u^-(y) -(1-t) \l[ u^+(x) u^-(y) + u^-(x) u^+(y)\r],
\end{split}
\]
and
\[
\begin{split}
(u^+-(1-t)u^-)^2(x) = (u^+)^2(x) + (1-t)^2(u^-)^2(x) 
\end{split}
\]
Using the above estimates in \eqref{est:path-3}, \eqref{est:+-} and $h(u^+, u^-) \geq 0$ leads to
{\[
\begin{split}
    \tilde E_r(\gamma_1(t)) \|tu^+ + (1-t)u\|_2^2 & =  \mathcal{E}_L(u^+, u^+) + (1-t)^2\mathcal{E}_L(u^-, u^-) + 2 (1-t) h(u^+, u^-) \\
    & \qquad - r (1-t)^2 \int_{\Omega} (u^+)^2 ~dx - r\int_\Omega (u^-)^2 ~dx \\
    & =  (r+\theta)\int_\Omega(u^+)^2 ~dx + \theta (1-t)^2 \int_\Omega(u^-)^2 ~dx - h(u^+,u^-)
- (1-t)^2 h(u^+,u^-) \\
 & \qquad + 2 (1-t) h(u^+, u^-) - r \int_{\Omega} (u^+)^2 ~dx - r  (1-t)^2\int_\Omega (u^-)^2 ~dx \\
 & ={ \theta \|tu^+ + (1-t)u\|_2^2 -r(1-t)^2 \|u^-\|_2^2 - t^2 h(u^+, u^-).}
\end{split}
\]
which further gives
\[
\tilde E_r(\gamma_1(t)) \leq \theta \quad \text{for all} \ t \in [0,1].
\]}
\textbf{Estimates for path $[0,1] \ni t \mapsto \gamma_2(t):$}
\begin{equation}\label{est:path-1}
    \begin{split}
  \tilde E_r(\gamma_2(t)) \|tu^- + (1-t)u^+\|_2 &= \mathcal{E}\l(tu^- + (1-t)u^+, tu^- + (1-t)u^+\r) \\
  & \quad - c_N \iint_{|x-y|\geq 1} \frac{(tu^- + (1-t)u^+)(x) (tu^- + (1-t)u^+)(y)}{|x-y|^N}  ~dx ~dy \\
  & \quad + \rho_N \int_{\Omega} (tu^- + (1-t)u^+)^2 ~dx - r \int_\Omega (tu^- + (1-t)u^+)^2 ~dx. 
\end{split}
\end{equation}
Note that
\[
\begin{split}
    & \l[(tu^- + (1-t)u^+)(x) - (tu^- + (1-t)u^+)(y)\r]^2 = \l[(1-t)(u^+(x) - u^+(y)) + t (u^-(x) - u^-(y)) \r]^2\\
    & \quad = (1-t)^2 \l[u^+(x) - u^+(y)\r]^2 + t^2 \l[u^-(x) - u^-(y)\r]^2 - 2 t(1-t) \l[u^+(x) u^-(y) + u^-(x) u^+(y)\r],
\end{split}
\]
\[
\begin{split}
  (tu^- + (1-t)u^+)(x) &(tu^- + (1-t)u^+)(y) \\
  & = (1-t)^2 u^+(x) u^+(y) + t^2 u^-(x) u^-(y) + t(1-t) \l[ u^+(x) u^-(y) + u^-(x) u^+(y)\r],
\end{split}
\]
and
\[
\begin{split}
(tu^- + (1-t)u^+)^2(x) = (1-t)^2 (u^+)^2(x) + t^2 (u^-)^2(x) 
\end{split}
\]
Using the above estimates in \eqref{est:path-1} and \eqref{est:+-} leads to
\[
\begin{split}
    \tilde E_r(\gamma_2(t)) \|tu^- + (1-t)u^+\|_2^2 & = (1-t)^2 \mathcal{E}_L(u^+, u^+) + t^2 \mathcal{E}_L(u^-, u^-) - 2 t (1-t) h(u^+, u^-) \\
    & \quad  - r t^2 \int_{\Omega} (u^-)^2 ~dx - r (1-t)^2 \int_{\Omega} (u^+)^2 ~dx \\
    & = \theta \|tu^- + (1-t)u^+\|_2^2 - h(u^+, u^-) - r t^2 \int_\Omega (u^-)^2 ~dx.\end{split}
\]
Now, by using the fact that $h(u^+, u^-) \geq 0$ and $r \geq 0$, we obtain
\[
\tilde E_r(\gamma_2(t)) \leq \theta \quad \text{for all} \ t \in [0,1].
\]
\textbf{Estimates for path $[0,1] \ni t \mapsto \gamma_3(t):$}
\begin{equation}\label{est:path-2}
    \begin{split}
  \tilde E_r(\gamma_3(t)) \|tu^+ -  u^-\|_2 &= \mathcal{E}\l(tu^+ - u^-,t u^+ -  u^-\r) - c_N \iint_{|x-y|\geq 1} \frac{(tu^+ -  u^-)(x) (tu^+ -  u^-)(y)}{|x-y|^N}  ~dx ~dy \\
  & \quad + \rho_N \int_{\Omega} (tu^+ -  u^-)^2 ~dx - r \int_\Omega (tu^+ -  u^-)^2 ~dx. 
\end{split}
\end{equation}
Note that
\[
\begin{split}
    & \l[(tu^+ -  u^-)(x) - (tu^+ -  u^-)(y)\r]^2 = \l[t(u^+(x) - u^+(y)) - (u^-(x) - u^-(y)) \r]^2\\
    & \qquad = t^2\l[u^+(x) - u^+(y)\r]^2 +  \l[u^-(x) - u^-(y)\r]^2 + 2 t \l[u^+(x) u^-(y) + u^-(x) u^+(y)\r],
\end{split}
\]
\[
\begin{split}
  (tu^+ - u^-)(x) &(tu^+ - u^-)(y) = t^2 u^+(x) u^+(y) +  u^-(x) u^-(y) -t \l[ u^+(x) u^-(y) + u^-(x) u^+(y)\r],
\end{split}
\]
and
\[
\begin{split}
(tu^+ -  u^-)^2(x) =t^2 (u^+)^2(x) +  (u^-)^2(x) 
\end{split}
\]
Using the above estimates in \eqref{est:path-2}, \eqref{est:+-} and $h(u^+, u^-) \geq 0$ leads to
\[
\begin{split}
    \tilde E_r(\gamma_3(t)) \|tu^+ -  u^-\|_2^2 & = t^2\mathcal{E}_L(u^+, u^+) +\mathcal{E}_L(u^-, u^-) + 2 t h(u^+, u^-) - r t^2 \int_{\Omega} (u^+)^2 ~dx\\
    & = {\theta \|tu^+ + (1-t)u^-\|_2^2 - (1-t)^2 h(u^+, u^-) -r\|u^-\|_2^2.}
\end{split}
\]
which further gives
\[
\tilde E_r(\gamma_3(t)) \leq \theta \quad \text{for all} \ t \in [0,1].
\]
Using the above paths, we can go from $u$ to $\frac{u^-}{\|u^-\|_2^2}$ (via $\gamma_1(t)$ and $\gamma_2(t)$) by staying at levels $\leq \theta$. Now we have to study the levels below $\theta-r$, so we define the set
\[\mathcal O = \{v\in \mathcal P:~ \tilde E_r(v)<\theta -r\}.\]

Due to our assumption, Propositions \ref{prop2} and \ref{prop3}, we state that $\varphi_1\in \mathcal O$ while $-\varphi_1\in \mathcal O$ if $\lambda_1^L< \theta -r$. Also, due to the choice of $\theta$ and Lemma \ref{lem1}, $\pm\varphi_1$ are the only possible critical points of $\tilde E_r$ in $\mathcal O$. Clearly,  so $\frac{u^-}{\|u^-\|_2^2}$ is a regular point of $\tilde E_r$, hence there exist $\eps>0$ and a $C^1$ path 
\[f:[-\epsilon,+\epsilon] \to S\;\;\text{with}\;\;f(0)= \frac{u^-}{\|u^-\|_2^2},\quad \frac{d}{dt}(\tilde E_r(f(t)))|_{t=0}\neq 0.\]
Following this path in either positive or negative directions will lead us to some $w\in \mathcal P$ where $\tilde E_r(w)<\theta-r$ i.e. $w\in \mathcal O$. Applying Lemma \ref{topo-1} and \ref{topo-2} to the component of $\mathcal O$ containing $w$ and keeping in mind our hypothesis, we conclude that either $\varphi_1$ or $-\varphi_1$ lies in this component. Without loss of generality, we assume $\varphi_1$ is in this component which implies that we can move from $\frac{u^-}{\|u^-\|_2^2}$ to $w$ and then $w$ to $+\varphi_1$ through a path in $\mathcal P$ while staying at levels $< \theta-r$. So, we have constructed a path in $\mathcal P$ which connects $u$ to $\varphi_1$ and we call the part of this path as $\gamma_4(t)$ which connects $\frac{u^-}{\|u^-\|_2^2}$ to $+\varphi_1$ whose level stays $\leq \theta -r$. Then, its symmetric path $-\gamma_4(t)$ connects $\frac{-u^-}{\|u^-\|_2^2}$ to $-\varphi_1$  and satisfies
\[|\tilde E_r(\gamma_4(t))-\tilde E_r(-\gamma_4(t))|\leq r\]
which gives 
\[\tilde E_r(-\gamma_4(t))\leq \tilde E_r(\gamma_4(t)) +r\leq (\theta-r)+r=\theta.\]
Altogether we have constructed a path in $\mathcal P$ from $u$ to $\frac{u^-}{\|u^-\|_2^2}$ and $\frac{u^-}{\|u^-\|_2^2}$ to $+\varphi_1$ staying at levels $\leq \theta -r$ and $-\gamma_4(1-t)$ allows us to go from  $-\varphi_1$ to $-\frac{u^-}{\|u^-\|_2^2}$ by staying at levels $\leq \theta$. Finally $-\gamma_3(t)$ brings us back from $-\frac{u^-}{\|u^-\|_2^2}$ to $u$ by staying at level $\theta$. By combining the above constructed paths, we have actually constructed a path in $\mathcal P$ from $-\varphi_1$ to $+\varphi_1$ staying at levels $\leq \theta$ which finishes the proof of this theorem.
\end{proof}

{ As an application of Theorem \ref{mainthrm-1}, we next provide the variational characterization of the second eigenvalue  $\lambda_2^L$. Moreover, the curve $\mathcal{C}$ passes through $(\lambda_2^L, \lambda_2^L)$ at $r=0.$ 
\begin{corollary}
    One has
    \[
    \lambda_2^L:= \inf_{\gamma \in \Gamma} \max_{u \in \gamma[-1,1]} \mathcal{E}_L(u,u)
    \]
    where $\Gamma$ is the family of all continuous paths in $\mathcal{P}$ going from $- \varphi_1$ to $\varphi_1.$
\end{corollary}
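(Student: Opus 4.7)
The plan is to deduce the statement by specialising Theorem~\ref{mainthrm-1} to the parameter value $r=0$ and identifying $c(0)$ with $\lambda_2^L$. When $r=0$, the penalty term vanishes and the constrained functional reduces to
\[
\tilde E_0(u) = \mathcal{E}_L(u,u) \quad \text{for } u\in \mathcal{P},
\]
so that, directly from the definition \eqref{eq8},
\[
c(0) = \inf_{\gamma\in\Gamma}\max_{u\in\gamma[-1,1]}\mathcal{E}_L(u,u).
\]
Hence the assertion is equivalent to the identity $\lambda_2^L = c(0)$, and I would prove this identity via a two-inequality argument.

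For the inequality $c(0)\ge \lambda_2^L$, I would use Theorem~\ref{third-crit-pt} at $r=0$: the point $(c(0),c(0))$ belongs to $\Sigma_L$, so by the definition of the Fu\v{c}\'{i}k spectrum there exists a nontrivial $u\in\mathbb{H}(\Omega)$ solving
\[
L_\Delta u = c(0)\,u^+ - c(0)\,u^- = c(0)\,u \quad\text{in }\Omega,\qquad u=0 \text{ in } \mathbb{R}^N\setminus\Omega.
\]
Thus $c(0)$ is an eigenvalue of $L_\Delta$, and since Proposition~\ref{mountain-third-critical-point} guarantees $c(0)>\lambda_1^L$, we have $c(0)\ge \lambda_2^L$ by the ordering of the eigenvalues of $L_\Delta$.

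For the reverse inequality $c(0)\le \lambda_2^L$, I would argue by contradiction. Suppose $\lambda_2^L < c(0)$. Then $(\lambda_2^L,\lambda_2^L)\in\Sigma_L$ (since $\lambda_2^L$ is an eigenvalue and eigenfunctions solve $(P_{\alpha,\beta})$ with $\alpha=\beta$), and this point lies on the diagonal, i.e.~on the line parallel to the diagonal passing through $(0,0)$. Moreover $\lambda_2^L>\lambda_1^L$ and $\lambda_2^L<c(0)$, so $(\lambda_2^L,\lambda_2^L)$ is a nontrivial point of $\Sigma_L$ on that line strictly below $(c(0),c(0))$. This contradicts Theorem~\ref{mainthrm-1} applied with $r=0$, which states that $(c(0),c(0))$ is the first nontrivial point of $\Sigma_L$ on the diagonal. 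Hence $c(0)\le \lambda_2^L$.

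The main obstacle is really just making sure that the specialisation $r=0$ is admissible in Theorem~\ref{mainthrm-1}; all other ingredients are immediate consequences of results already established in the paper.  No genuinely new estimate is required: everything reduces to combining the minimax definition of $c(0)$, the membership $(c(0),c(0))\in\Sigma_L$, and the first-nontrivial-point property.
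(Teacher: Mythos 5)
Your proposal is correct and matches the paper's intended argument exactly: the paper presents the corollary as an application of Theorem~\ref{mainthrm-1} at $r=0$, identifying $c(0)$ with $\lambda_2^L$, which is precisely what you prove via the two inequalities $c(0)\ge\lambda_2^L$ (from $(c(0),c(0))\in\Sigma_L$ and $c(0)>\lambda_1^L$) and $c(0)\le\lambda_2^L$ (from the first-nontrivial-point property on the diagonal). The paper leaves the details implicit; you have supplied them faithfully.
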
}

\section{Properties of the non-trivial curve $\mathcal{C}$}\label{section-curve-prop}
In this section, we study some monotonicity and regularity properties of the first non-trivial curve $\mathcal{C}$ as well as its asymptotic behaviour.

\begin{proposition}\label{curve-prop-1}
   The curve $r \mapsto c(r)$ for $r\in \mathbb R^+$ is non-increasing and Lipschitz continuous.
\end{proposition}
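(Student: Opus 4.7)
The plan is to derive both assertions directly from the minimax characterization
\[
c(r) = \inf_{\gamma \in \Gamma} \max_{u \in \gamma[-1,1]} E_r(u),
\]
exploiting the fact that $E_r(u) = \mathcal{E}_L(u,u) - r\int_\Omega (u^+)^2\,dx$ depends affinely on the parameter $r$, with the coefficient $\int_\Omega (u^+)^2\,dx$ uniformly bounded on the constraint set $\mathcal{P}$. No recourse to the structure of optimal paths or to attainment of the minimax will be needed; pointwise comparison of $E_{r_1}$ and $E_{r_2}$ on $\mathcal{P}$ will suffice.

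For monotonicity, I would fix $0 \le r_1 \le r_2$ and observe that, since $(u^+)^2 \ge 0$, one has $E_{r_2}(u) \le E_{r_1}(u)$ for every $u \in \mathbb{H}(\Omega)$. Taking the supremum along any admissible path $\gamma \in \Gamma$ preserves this inequality, and then passing to the infimum over $\gamma \in \Gamma$ gives $c(r_2) \le c(r_1)$. Hence $r \mapsto c(r)$ is non-increasing on $\mathbb{R}^+$.

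For the Lipschitz bound, I would start from the elementary identity
\[
E_{r_2}(u) - E_{r_1}(u) = -(r_2 - r_1)\int_\Omega (u^+)^2\,dx
\]
and use that, for $u \in \mathcal{P}$, $\int_\Omega (u^+)^2\,dx \le \int_\Omega u^2\,dx = 1$. This yields the uniform estimate $|E_{r_2}(u) - E_{r_1}(u)| \le |r_2 - r_1|$ on $\mathcal{P}$. Because every path $\gamma \in \Gamma$ takes values in $\mathcal{P}$, it follows that
\[
\Bigl|\max_{u \in \gamma[-1,1]} E_{r_2}(u) - \max_{u \in \gamma[-1,1]} E_{r_1}(u)\Bigr| \le |r_2 - r_1|
\]
for each $\gamma \in \Gamma$. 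Passing to the infimum over $\Gamma$ and running the argument symmetrically in $r_1, r_2$ delivers $|c(r_2) - c(r_1)| \le |r_2 - r_1|$, i.e. $c$ is $1$-Lipschitz.

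There is no substantive obstacle in this argument; the result is a direct consequence of the affine dependence of $E_r$ on $r$ and the uniform $L^2$-normalization on $\mathcal{P}$. The only point to verify carefully is that the inf-sup inequality preserves pointwise bounds, which is a standard property of the minimax: if $F \le G + C$ on the ambient set then $\inf_\gamma \sup_\gamma F \le \inf_\gamma \sup_\gamma G + C$, and symmetry closes the estimate.
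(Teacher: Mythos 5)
Your proposal is correct and takes essentially the same approach as the paper: both exploit the affine dependence $E_{r'}(u) - E_r(u) = -(r'-r)\int_\Omega (u^+)^2\,dx$ together with the uniform bound $\int_\Omega (u^+)^2\,dx \le \|u\|_2^2 = 1$ on $\mathcal{P}$, and then pass this through the minimax. The only cosmetic difference is that you invoke the abstract principle that pointwise bounds survive an inf-sup, whereas the paper unwinds it by choosing an $\eps$-near-optimal path and a maximizer $w_0$ on it; the resulting $1$-Lipschitz estimate is identical.
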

\begin{proof}
    The proof follows by adopting the same arguments as in \cite[Proposition 4.1]{cuesta}. For the sake of completeness, we give the details. Let $r < r'.$ Since, $\tilde{E}_r(u) \geq \tilde{E}_{r'}(u)$ for any $u \in \mathcal{P}$, we have $c(r) \geq c(r').$ Now let $\eps>0.$ Then, there exists a path $\gamma \in \Gamma$ such that
    \[
    \max_{u \in \gamma[-1,1]} \tilde{E}_{r'} (u) \leq c(r') +\eps.
    \]
    Therefore, we have
    \[
    0 \leq c(r) - c(r') \leq \max_{u \in \gamma[-1,1]} \tilde{E}_{r} (u) - \max_{u \in \gamma[-1,1]} \tilde{E}_{r'} (u) + \eps
    \]
Denoting $w_0$ be a point in $\gamma[-1,1]$ where $\tilde{E}_r$ achieves its maximum on $\gamma[-1,1],$ we have
\[
0 \leq c(r) - c(r') \leq \tilde{E}_{r} (w_0) - \tilde{E}_{r'} (w_0) + \eps \leq (r'-r) + \eps.
\]
Since $\eps>0$ is arbitrary, we obtain the Lipschitz property of the given map.
\end{proof}
\begin{lemma}\label{lem-eigenvalues-order}
    Let $A$, $B$ be two bounded open sets in $\mathbb{R}^N$, with $A \subset B$ and $B$ is connected then $\lambda_1^L(A) > \lambda_1^L(B).$
\end{lemma}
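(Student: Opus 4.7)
The plan is to exploit the variational characterization of $\lambda_1^L$ together with the strict positivity of first eigenfunctions. I will write $\varphi_1^A$ and $\varphi_1^B$ for the normalized first eigenfunctions on $A$ and $B$ respectively, and assume throughout (as is implicit in the statement) that the inclusion $A\subset B$ is strict.

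First, I would establish the weak inequality $\lambda_1^L(A)\ge \lambda_1^L(B)$ by a direct extension-by-zero argument. Any $u\in \mathbb{H}(A)$ extended by zero outside $A$ lies in $\mathbb{H}(B)$ because it already vanishes on $\mathbb{R}^N\setminus B\subset \mathbb{R}^N\setminus A$; moreover both the $L^2$-norm and the quadratic form $\mathcal{E}_L$ in \eqref{quadratic-form} are unchanged under this extension (they are integrals over $\mathbb{R}^N$ or $\mathbb{R}^{2N}$ and depend only on the values of $u$ on $\mathbb{R}^N$). Testing the variational characterization
\[
\lambda_1^L(B)=\inf_{v\in \mathbb{H}(B),\,\|v\|_2=1}\mathcal{E}_L(v,v)
\]
with the zero-extension of $\varphi_1^A$ yields $\lambda_1^L(B)\le \mathcal{E}_L(\varphi_1^A,\varphi_1^A)=\lambda_1^L(A)$.

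Next, for strict inequality I would argue by contradiction, assuming $\lambda_1^L(A)=\lambda_1^L(B)$. Then the zero-extension $\widetilde{\varphi_1^A}$ attains the infimum defining $\lambda_1^L(B)$, so it is a non-negative first eigenfunction of $L_\Delta$ on $B$. Since $A\subsetneq B$ and both sets are open, $B\setminus \overline{A}$ is a non-empty open set, hence of positive Lebesgue measure, on which $\widetilde{\varphi_1^A}\equiv 0$. This contradicts the strict positivity of non-trivial non-negative eigenfunctions on $B$, which follows from the strong maximum principle of \cite[Theorem~1.1]{Jarohs-Weth-2018} exactly as already invoked in the proof of Theorem \ref{thm-sign-changing}. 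Alternatively, one can obtain the contradiction pointwise: for any $x\in B\setminus \overline{A}$ the integral representation \eqref{def:log-lap-ope} gives
\[
L_\Delta \widetilde{\varphi_1^A}(x)=-c_N\int_{\mathcal{B}_1(x)}\frac{\widetilde{\varphi_1^A}(y)}{|x-y|^N}\,dy-c_N\int_{\mathbb{R}^N\setminus\mathcal{B}_1(x)}\frac{\widetilde{\varphi_1^A}(y)}{|x-y|^N}\,dy<0,
\]
since $\widetilde{\varphi_1^A}\ge 0$ on $\mathbb{R}^N$ and is strictly positive on a set of positive measure inside $A$. On the other hand, the eigenvalue equation on $B$ forces $L_\Delta\widetilde{\varphi_1^A}(x)=\lambda_1^L(B)\widetilde{\varphi_1^A}(x)=0$ on $B\setminus \overline{A}$, a contradiction.

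The main obstacle is only bookkeeping: verifying that the extension by zero genuinely lies in $\mathbb{H}(B)$ and preserves $\mathcal{E}_L$, and then invoking the correct form of the strong maximum principle for $L_\Delta$. Both points are already embedded in the technology used in Section \ref{section-sign-chang}, so no new analytic ingredient is required.
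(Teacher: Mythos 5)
Your argument follows the route the paper compresses into a single sentence: extension by zero gives $\lambda_1^L(A)\ge\lambda_1^L(B)$ from the variational characterization, and a strong maximum principle forces strictness. The skeleton is sound, and the weak-inequality step is correct because both $\|\cdot\|_2$ and the form $\mathcal{E}_L$ in \eqref{quadratic-form} are integrals over $\mathbb{R}^N$ and $\mathbb{R}^{2N}$ that do not see the domain explicitly.

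There is, however, a genuine gap in your strictness step: the claim that ``$A\subsetneq B$ with both sets open implies $B\setminus\overline{A}$ is a non-empty open set'' is false. Take $B$ an open ball and $A=B\setminus H$ with $H$ a hyperplane through the center: then $A$ is open, $A\subsetneq B$, yet $\overline{A}\supseteq B$ and $B\setminus\overline{A}=\emptyset$. In this example $|B\setminus A|=0$, so in fact $\mathbb{H}(A)=\mathbb{H}(B)$ and the lemma's conclusion itself fails; the hypothesis the lemma actually needs is $|B\setminus A|>0$, not merely $A\subsetneq B$. With that corrected hypothesis your maximum-principle argument goes through: the zero-extension $\widetilde{\varphi_1^A}$ would be a non-negative non-trivial first eigenfunction on the connected set $B$ vanishing on a set of positive measure, contradicting \cite[Theorem~1.1]{Jarohs-Weth-2018}. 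Your alternative pointwise computation, on the other hand, genuinely requires a point $x\in B\setminus\overline{A}$ and the validity of the integral representation \eqref{def:log-lap-ope} there, so it does not survive the corrected hypothesis and should be dropped or reformulated as an essential-infimum statement on a positive-measure subset of $B\setminus A$.
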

\begin{proof}
    The proof follows from the variational characterization of the first eigenvalue $\lambda_1^L$.
\end{proof}
\begin{lemma}\label{lem-perturbed-fucik}
    Let $(\alpha, \beta) \in \mathcal{C}$ and let $a_1, a_2 \in L^\infty(\Omega)$ such that
    \begin{equation}\label{pertur-cond-1}
        \lambda_1^L \leq a_1(x) \leq \alpha \quad \text{and} \quad \lambda_1^L \leq a_2(x) \leq \beta \ \text{a.e. in} \ \Omega. 
    \end{equation}
Assume that
\begin{equation}\label{pertur-cond-2}
    \lambda_1^L < a_1(x) \quad \text{and} \quad \lambda_1^L < a_2(x) \quad \text{on subsets of positive measure}.
\end{equation}
Then, any non-trivial solution $u$ of 
\begin{equation}\label{pertubed-prob}
    \left\{\begin{aligned}
   L_\Delta u\: &= a_1(x) u^+ - a_2(x) u^- &&~~\text{in} ~~ \Omega, \\
      u&=0 &&~~\text{in} ~~\mathbb R^N\setminus \Omega,
    \end{aligned} \right.\tag{$P_{a_1, a_2}$}
\end{equation}
changes sign in $\Omega$, and 
\[
a_1(x) = \alpha \ \text{a.e. on} \ \{x \in \Omega: u(x) >0\}, \quad a_2(x) = \beta \ \text{a.e. on} \ \{x \in \Omega: u(x) <0\}
\]
(and consequently $u$ is an eigenfunction associated to the point $(\alpha, \beta)$ of $\mathcal{C}$).
\end{lemma}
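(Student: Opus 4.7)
My plan is to prove the lemma in three stages: the sign-change via Theorem~\ref{thm-sign-changing}, an upper bound on $\tilde E_r(u/\|u\|_2)$, and a mountain-pass argument promoting that upper bound to equality. First, if $u\ge 0$ in $\Omega$, then $u=u^+$ and \eqref{pertubed-prob} reduces to $L_\Delta u=a_1(x)\,u$ with $a_1\ge\lambda_1^L$ a.e.\ and $a_1>\lambda_1^L$ on a set of positive measure, so Theorem~\ref{thm-sign-changing} forces $u$ to change sign, a contradiction; the case $u\le 0$ is symmetric using $a_2$. Thereafter, using the symmetry of $\Sigma_L$ with respect to the diagonal I may assume $\alpha\ge\beta$, so $(\alpha,\beta)=(r+c(r),c(r))$ with $r:=\alpha-\beta\ge 0$.

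Second, decomposing $\mathcal{E}_L(u,u^\pm)$ exactly as in the proof of Theorem~\ref{mainthrm-1} and using the identity $\mathcal{E}_L(u^+,u^-)=-h(u^+,u^-)$, where
\[
h(u^+,u^-)=c_N\iint_{\mathbb R^N\times\mathbb R^N}\frac{u^+(x)\,u^-(y)}{|x-y|^N}\,dx\,dy>0
\]
is strictly positive because $u$ is sign-changing, testing \eqref{pertubed-prob} against $u^\pm$ gives
\[
\mathcal{E}_L(u^+,u^+)+h(u^+,u^-)=\int_\Omega a_1(u^+)^2\,dx,\qquad
\mathcal{E}_L(u^-,u^-)+h(u^+,u^-)=\int_\Omega a_2(u^-)^2\,dx.
\]
Setting $\tilde u:=u/\|u\|_2\in\mathcal P$ and invoking $a_1-r\le\beta$ and $a_2\le\beta$, I obtain
\[
\tilde E_r(\tilde u)\,\|u\|_2^2
=\int_\Omega (a_1-r)(u^+)^2\,dx+\int_\Omega a_2(u^-)^2\,dx
\le\beta\,\|u\|_2^2,
\]
so $\tilde E_r(\tilde u)\le c(r)$, with equality iff $a_1\equiv\alpha$ a.e.\ on $\{u>0\}$ and $a_2\equiv\beta$ a.e.\ on $\{u<0\}$. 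The conclusion therefore reduces to proving $\tilde E_r(\tilde u)\ge c(r)$.

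Third, I argue by contradiction: assume $\theta':=\tilde E_r(\tilde u)<c(r)$ and build a path in $\Gamma$ along which $\tilde E_r<c(r)$, contradicting $c(r)=\inf_\Gamma\max\tilde E_r$. Reusing the segments $\gamma_1,\gamma_2,\gamma_3$ from the proof of Theorem~\ref{mainthrm-1} with the identical algebra (now with $a_1,a_2$ in place of the constants), each level $\tilde E_r(\gamma_j(t))$ is bounded by $\beta$ minus a strictly positive multiple of $h(u^+,u^-)$, hence $<c(r)$ everywhere except at the endpoint carrying $\tilde u$, where the level equals $\theta'<c(r)$. This places $\tilde u$, $u^+/\|u^+\|_2$, and $\pm u^-/\|u^-\|_2$ all in one connected component $\mathcal O_*$ of $\mathcal O_0:=\{v\in\mathcal P:\tilde E_r(v)<c(r)\}$. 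A short case analysis using the variational bound $\mathcal{E}_L(w,w)\ge\lambda_1^L\|w\|_2^2$ together with Theorem~\ref{mainthrm-1} shows that any critical value $s<c(r)$ of $\tilde E_r$ must be $\lambda_1^L-r$ or $\lambda_1^L$, so the only critical points of $\tilde E_r$ inside $\mathcal O_0$ are $\pm\varphi_1$; by Lemma~\ref{topo-2}, $\mathcal O_*$ contains at least one of them. If both $\pm\varphi_1\in\mathcal O_*$, a direct path in $\mathcal O_0$ between them with levels $<c(r)$ already gives the contradiction. Otherwise I pass to the smaller sublevel $\mathcal O':=\{v\in\mathcal P:\tilde E_r(v)<c(r)-r\}$: the estimate $\tilde E_r(u^-/\|u^-\|_2)\le \beta-r-h(u^+,u^-)/\|u^-\|_2^2<c(r)-r$ puts $u^-/\|u^-\|_2$ in $\mathcal O'$; Lemmas~\ref{topo-1}--\ref{topo-2} supply a continuous path $\gamma_4$ in $\mathcal O'$ from $u^-/\|u^-\|_2$ to whichever of $\pm\varphi_1$ sits in its component, and the slack $|\tilde E_r(-v)-\tilde E_r(v)|\le r$ on $\mathcal P$ promotes its mirror $-\gamma_4$ into a path from $-u^-/\|u^-\|_2$ to the opposite $\mp\varphi_1$ with levels $<c(r)$. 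Concatenating $-\gamma_4^{-1},\gamma_3,\gamma_1,\gamma_2,\gamma_4$ yields an element of $\Gamma$ whose maximum lies strictly below $c(r)$, the desired contradiction. The main obstacle throughout is this third step: tracking the nonlocal cross term $h(u^+,u^-)$ along each standard segment and arranging the $-r$ slack inside $\mathcal O'$ so that the symmetrization $v\mapsto-v$ still keeps the mirror path below $c(r)$, exactly as in the pure Fu\v{c}\'{i}k analysis of Theorem~\ref{mainthrm-1}.
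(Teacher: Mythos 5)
Your proposal is correct and follows essentially the same strategy as the paper: sign change via Theorem~\ref{thm-sign-changing}, the pointwise upper bound $\tilde E_r(u/\|u\|_2)\le\beta$ from the constraints on $a_1,a_2$, and the path-construction/topological argument of Theorem~\ref{mainthrm-1} to rule out strict inequality. The only notable difference is organizational: the paper starts from the contradiction hypothesis (\eqref{claim1} or \eqref{claim2}) and derives strict bounds on $\mathcal{E}_L(u^\pm,u^\pm)/\|u^\pm\|_2^2$, whereas you derive the identity $\tilde E_r(\tilde u)\|u\|_2^2=\int_\Omega(a_1-r)(u^+)^2+\int_\Omega a_2(u^-)^2$ upfront and observe that the desired conclusion is precisely the equality case in $\tilde E_r(\tilde u)\le c(r)$, which makes the logical structure slightly more transparent; both routes then coincide in the construction of a path in $\Gamma$ with maximum strictly below $c(r)$, using $\mathcal{E}_L(u^+,u^-)=-h(u^+,u^-)<0$ along the segments $\gamma_1,\gamma_2,\gamma_3$ and the topological Lemmas~\ref{topo-1}--\ref{topo-2} with the $r$-slack under $v\mapsto -v$ to close the loop.
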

\begin{proof}
   Let $u$ be a nontrivial solution of \eqref{pertubed-prob}. Replacing $u$ by $-u$ if necessary, we may assume that the point $(\alpha, \beta) \in \mathcal{C}$ satisfies $\alpha \geq \beta.$ We first show that $u$ changes sign in $\Omega.$ Arguing by contradiction, suppose this is not the case; without loss of generality, assume that $u \geq 0$ a.e. in $\Omega$ (the opposite case can be treated analogously). Then $u$ satisfies 
   \[
   L_{\Delta} u = a_1(x) u \quad \text{in} \ \Omega \quad \text{and} \quad u =0 \quad \text{in} \ \mathbb{R}^N \setminus \Omega.
   \]
Since $a_1$ satisfies \eqref{pertur-cond-1} and \eqref{pertur-cond-2}, by Theorem \ref{thm-sign-changing}, $u$ is a sign-changing function, which is a contradiction. Therefore, $u$ changes sign in $\Omega.$ Now, we assume by contradiction that either 
\begin{equation}\label{claim1}
    |\{x \in \Omega : a_1(x) < \alpha \ \text{and} \ u(x) >0\}| >0
\end{equation}
or
\begin{equation}\label{claim2}
|\{x \in \Omega: a_2(x) < \beta \ \text{and} \ u(x)<0\}| >0.
\end{equation}
Here, as before, $|\cdot|$ denotes Lebesgue measure. Suppose \eqref{claim1} holds true (a similar argument would work for \eqref{claim2}). Put $\alpha-\beta = r \geq 0.$ Then, $\beta = c(r)$ where $c(r)$ is given by \eqref{eq8}. We will show that there exists a path $\gamma \in \Gamma$ such that 
\begin{equation}\label{path-contra}
    \max_{u \in \gamma[-1,1]} \tilde{E}_r(u) < \beta,
\end{equation}
which yields a contradiction with the definition of $c(r).$ In order to construct a path $\gamma$, we show that
\begin{equation}\label{eigen:upper:bound}
    \frac{\mathcal{E}_L(u^+, u^+)}{\|u^+\|_2^2} < \alpha \quad \text{and} \quad \frac{\mathcal{E}_L(u^-, u^-)}{\|u^-\|_2^2} < \beta.
\end{equation}
By taking $u^+$ and $-u^-$ as test functions in \eqref{pertubed-prob}, we obtain
\begin{equation}
    \mathcal{E}_L(u^+, u^+) \leq \mathcal{E}_L(u^+, u^+) - \mathcal{E}_L(u^-, u^+)= \mathcal{E}_L(u, u^+) = \into a_1(x) (u^+)^2 ~dx < \alpha \|u^+\|_2^2
\end{equation}
and \begin{equation}
    \mathcal{E}_L(u^-, u^-) \leq \mathcal{E}_L(u^+, - u^-) + \mathcal{E}_L(-u^-, -u^-)= \mathcal{E}_L(u, -u^-) = \into a_2(x) (u^-)^2 ~dx < \beta \|u^+\|_2^2
\end{equation}
Since $\mathcal{E}_L(u^-, u^+) \leq 0$, we obtain the claim in \eqref{eigen:upper:bound}. 
Moreover, we have
\[
\begin{split}
\tilde{E}_r \left(\frac{u}{\|u\|_2}\right) & = \frac{\mathcal{E}_L(u,u)}{\|u\|_2^2} - r \into \frac{(u^+)^2}{\|u\|_2^2} = \frac{\mathcal{E}_L(u^+,u^+)}{\|u\|_2^2} + \frac{\mathcal{E}_L(u^-,u^-)}{\|u\|_2^2} - 2 \frac{h(u^+, u^-)}{\|u\|_2^2} - r \into \frac{(u^+)^2}{\|u\|_2^2} ~dx\\
& \leq \frac{\mathcal{E}_L(u^+,u^+)}{\|u\|_2^2} + \frac{\mathcal{E}_L(u^-,u^-)}{\|u\|_2^2} - r \into \frac{(u^+)^2}{\|u\|_2^2} ~dx\\
& \leq (\alpha - r) \into \frac{(u^+)^2}{\|u\|_2^2} ~dx + \beta \into \frac{(u^-)^2}{\|u\|_2^2} ~dx = \beta.
\end{split}
\]
where
    \[h(u^+,u^-) =  c_N\iint_{|x-y|\leq 1}\frac{u^+(x)u^-(y)}{|x-y|^N} +c_N \iint_{|x-y|\geq 1}\frac{u^-(x)u^+(y)}{|x-y|^N} \geq 0.\]
Similarly, we have
\[
\tilde{E}_r\left(\frac{u^+}{\|u^+\|_2}\right) < \alpha-r =\beta \quad \text{and} \  \tilde{E}_r\left(\frac{u^-}{\|u^-\|_2}\right) < \beta-r.
\]

Now, by using Lemma \ref{topo-2}, we have that there exists a critical point of $\tilde{E}_r$ in the connected component of the set $O = \{u \in \mathcal{P} : \tilde{E}_r(u) < \beta - r\}$. As the point $(\alpha, \beta) \in \mathcal{C}$, the only possible critical point of $\tilde{E}_r$ is $\varphi_1$, then we can construct a path from $\varphi_1$ to $- \varphi_1$ exactly in the same manner as in Theorem \ref{mainthrm-1} satisfying \eqref{path-contra}, and hence the result follows.
\end{proof}

\begin{corollary}\label{coro-perturbed-fucik}
    Let $(\alpha, \beta) \in \mathcal{C}$ and $a_1, a_2 \in L^\infty(\Omega)$ satisfying \eqref{pertur-cond-1} and \eqref{pertur-cond-2}. If either $a_1(x) < \alpha$ a.e. in $\Omega$ or $a_2(x) < \beta$ a.e. in $\Omega$, then \eqref{pertubed-prob} has only the trivial solution.
\end{corollary}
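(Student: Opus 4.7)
The plan is to deduce this corollary as a direct consequence of Lemma~\ref{lem-perturbed-fucik}, so the strategy is a short argument by contradiction rather than a new construction. I would suppose that \eqref{pertubed-prob} admits a nontrivial solution $u \in \h(\Omega)$, and then exploit the rigidity information provided by Lemma~\ref{lem-perturbed-fucik} regarding the precise values of the weights $a_1, a_2$ on the positivity and negativity sets of $u$.

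More concretely, since the hypotheses \eqref{pertur-cond-1} and \eqref{pertur-cond-2} are in force and $(\alpha,\beta) \in \mathcal{C}$, Lemma~\ref{lem-perturbed-fucik} first forces $u$ to change sign in $\Omega$. In particular, both sets
\[
\Omega_+ := \{x \in \Omega : u(x) > 0\}, \qquad \Omega_- := \{x \in \Omega : u(x) < 0\}
\]
have strictly positive Lebesgue measure. The same lemma then identifies $a_1(x) = \alpha$ for a.e.\ $x \in \Omega_+$ and $a_2(x) = \beta$ for a.e.\ $x \in \Omega_-$. At this point I would simply read off the contradiction: if the standing hypothesis is $a_1(x) < \alpha$ a.e.\ in $\Omega$, the equality $a_1 = \alpha$ on the positive-measure set $\Omega_+$ cannot hold; and symmetrically, if instead $a_2(x) < \beta$ a.e.\ in $\Omega$, the identification on $\Omega_-$ is violated. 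In either case the nontrivial solution $u$ cannot exist, which proves the claim.

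There is essentially no main obstacle here, since the heavy lifting (sign-change of $u$ and the sharp identification of $a_1, a_2$ on $\Omega_\pm$) has already been carried out in the proof of Lemma~\ref{lem-perturbed-fucik} via the path-construction argument that yields the strict inequality $\max_{\gamma[-1,1]} \tilde E_r < \beta$. The only small point worth mentioning explicitly in the write-up is that the sign-change conclusion of the lemma genuinely requires both $\Omega_+$ and $\Omega_-$ to have positive measure (so that the a.e.\ identification on each of them is nonvacuous), which is exactly what allows the strict pointwise hypothesis $a_1 < \alpha$ or $a_2 < \beta$ to be turned into a contradiction.
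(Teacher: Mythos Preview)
Your proposal is correct and is exactly the approach taken in the paper, which simply states that the corollary follows directly from Lemma~\ref{lem-perturbed-fucik}. You have in fact spelled out the argument in more detail than the paper does.
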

\begin{proof}
   This follows directly from Lemma \ref{lem-perturbed-fucik}.
\end{proof}
\begin{proposition}\label{curve-prop-2}
   The curve $r \mapsto (r+c(r), c(r))$ for $r\in \mathbb R^+$ is  continuous and strictly decreasing in the sense that $r < r'$ implies $r+c(r) < r' + c(r')$ and $c(r) > c(r')$.
\end{proposition}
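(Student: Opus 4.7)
The continuity of $r \mapsto (r+c(r),c(r))$ is immediate from the Lipschitz continuity of $c$ already established in Proposition \ref{curve-prop-1}, so the real substance of the statement is strict monotonicity. The plan is to prove both strict inequalities by contradiction, with Corollary \ref{coro-perturbed-fucik} as the main instrument: whenever a hypothetical equality between parameters on $\mathcal{C}$ produces a nontrivial solution whose constant weights lie weakly below a comparison point of $\mathcal{C}$ with at least one strict inequality, the corollary forces the trivial solution and delivers a contradiction with $\mathcal{C}\subset\Sigma_L$.

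For the strict decrease of $c$, I would fix $0\le r<r'$ and assume toward contradiction $c(r)=c(r')$. Let $u$ be a nontrivial solution associated with the point $(r+c(r),c(r))\in\mathcal{C}$, so that $u$ solves \eqref{pertubed-prob} with constant weights $a_1\equiv r+c(r)$ and $a_2\equiv c(r)$. Apply Corollary \ref{coro-perturbed-fucik} at $(\alpha,\beta):=(r'+c(r'),c(r'))\in\mathcal{C}$: under the contradiction hypothesis $a_1=r+c(r)<r'+c(r')=\alpha$ and $a_2=c(r)=c(r')=\beta$, while the strict bound $c(r)>\lambda_1^L$ from Proposition \ref{mountain-third-critical-point} ensures both $a_1,a_2>\lambda_1^L$. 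Thus hypotheses \eqref{pertur-cond-1} and \eqref{pertur-cond-2} both hold, and the strict inequality $a_1<\alpha$ triggers Corollary \ref{coro-perturbed-fucik} to give $u\equiv 0$, a contradiction.

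For the strict increase of $r+c(r)$, I would again fix $0\le r<r'$ and assume toward contradiction $r+c(r)=r'+c(r')$. Subtracting this from the non-increasing property of $c$ (Proposition \ref{curve-prop-1}) yields $c(r)-c(r')=r'-r>0$, so $c(r')<c(r)$ strictly. Now take a nontrivial solution $u$ associated with $(r'+c(r'),c(r'))\in\mathcal{C}$ and apply Corollary \ref{coro-perturbed-fucik} at $(\alpha,\beta):=(r+c(r),c(r))\in\mathcal{C}$ with constant weights $a_1\equiv r'+c(r')=\alpha$ and $a_2\equiv c(r')<c(r)=\beta$. The lower bound $c(r')>\lambda_1^L$ again makes \eqref{pertur-cond-1} and \eqref{pertur-cond-2} routine to verify, but this time the strict inequality $a_2<\beta$ sits on the second weight, so Corollary \ref{coro-perturbed-fucik} once more forces $u\equiv 0$ and yields the contradiction.

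No serious obstacle is anticipated beyond arranging each comparison in the correct direction: in the first step one tests the larger-$r$ point against the smaller-$r$ eigenfunction so as to make $a_1<\alpha$, and in the second step the roles are exactly reversed, with the monotonicity obtained in the first step being precisely what secures $a_2<\beta$. Everything else is bookkeeping, and the real analytic content is imported wholesale from Corollary \ref{coro-perturbed-fucik}, whose own proof traces back through Lemma \ref{lem-perturbed-fucik} to the sign-changing result of Theorem \ref{thm-sign-changing}.
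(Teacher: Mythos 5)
Your argument is correct and takes essentially the same route as the paper: both proofs combine Proposition~\ref{curve-prop-1} for continuity with Corollary~\ref{coro-perturbed-fucik} applied to pairs of points on $\mathcal{C}$, pitting a hypothetical nontrivial solution at one point against the trivial-solution conclusion at the other. The paper phrases the contradiction hypothesis as ``either $r+c(r)\ge r'+c(r')$ or $c(r)\le c(r')$'' and argues each branch with the weak inequalities intact, whereas you split the statement into two separate claims and reduce each to an equality. For the second claim you should say explicitly why only the equality case $r+c(r)=r'+c(r')$ needs to be excluded: the Lipschitz estimate $0\le c(r)-c(r')\le r'-r$ from Proposition~\ref{curve-prop-1} already gives $r+c(r)\le r'+c(r')$, just as the left-hand inequality there already gives $c(r)\ge c(r')$. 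You invoke Proposition~\ref{curve-prop-1} for the first reduction but leave the second one implicit. Also, the strict inequality $c(r)>c(r')$ in your second step follows directly from the assumed equality $r+c(r)=r'+c(r')$ together with $r<r'$; it is not a consequence of the first step, so the remark attributing it to the monotonicity you proved there should be removed. None of this affects the substance, and the role of Corollary~\ref{coro-perturbed-fucik} (and hence of Lemma~\ref{lem-perturbed-fucik} and Theorem~\ref{thm-sign-changing}) is identified correctly.
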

\begin{proof}
    The continuity of the map $r \mapsto (r+c(r), c(r))$ follows from Proposition \ref{curve-prop-1}. To prove that, the curve is strictly decreasing, we will use Corollary \ref{coro-perturbed-fucik}. Let $r < r'$. Assume by contradiction that
    \[
    \text{either} \quad r + c(r) \geq r' + c(r') \quad \text{or} \quad c(r) \leq c(r').
    \]
    In the first case, we have 
    \[
    r + c(r) \geq r' + c(r') > r + c(r') \quad \Longrightarrow \quad c(r) > c(r').
    \]
    Now, by taking $(\alpha, \beta) = (r+ c(r), c(r))$ in Corollary \ref{coro-perturbed-fucik}, we obtain the problem 
    \[
    L_\Delta v = (r'+ c(r')) v^+ - c(r') v^- ~~\text{in} ~~ \Omega, \quad v =0 \ \text{in} \ \mathbb{R}^N \setminus \Omega
    \]
    has only the trivial solution, which gives a contradiction to the fact that $(r' + c(r'), c(r')) \in \Sigma_L.$ On the other hand, if $c(r) \leq c(r')$ implies $ r + c(r) < r' + c(r').$ Now, again by applying Corollary \ref{coro-perturbed-fucik} with $(\alpha, \beta) = (r'+ c(r'), c(r'))$, we obtain the problem 
    \[
    L_\Delta v = (r+ c(r)) v^+ - c(r) v^- ~~\text{in} ~~ \Omega, \quad v =0 \ \text{in} \ \mathbb{R}^N \setminus \Omega
    \]
    has only the trivial solution, which gives a contradiction to the fact that $(r + c(r), c(r)) \in \Sigma_L.$
\end{proof}
\begin{proposition}\label{curve-prop-3}
    The limit of $c(r)$ as $r \to +\infty$ is $\lambda_1^L.$
\end{proposition}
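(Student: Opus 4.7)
The plan is a two-sided bound. The lower bound $\liminf_{r\to\infty} c(r) \geq \lambda_1^L$ follows from $c(r) > \lambda_1^L$ (Proposition~\ref{mountain-third-critical-point}) together with the monotonicity in Proposition~\ref{curve-prop-1}; these also guarantee that $L := \lim_{r\to\infty} c(r)$ exists. For the upper bound $L \leq \lambda_1^L$ it suffices, for every $\epsilon > 0$, to exhibit some $R > 0$ and a path $\gamma \in \Gamma$ such that $\max_{t\in[-1,1]} \tilde E_r(\gamma(t)) \leq \lambda_1^L + 2\epsilon$ for all $r \geq R$; by~\eqref{eq8} this forces $c(r) \leq \lambda_1^L + 2\epsilon$, and letting $\epsilon \to 0$ gives $\limsup_{r \to \infty} c(r) \leq \lambda_1^L$.

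Fix $\epsilon > 0$. Using the density of $C_c^\infty(\Omega)$ in $\mathbb{H}(\Omega)$ together with the Rayleigh characterization of $\lambda_1^L$, choose an open set $A \subset\subset \Omega$ of the form $A = \Omega \setminus \overline{B_\eta}$ (for a small ball $B_\eta \subset \Omega$) such that $M := \lambda_1^L(A) - \lambda_1^L \leq \epsilon$; Lemma~\ref{lem-eigenvalues-order} ensures $M \geq 0$. Let $\psi_A$ be the positive first eigenfunction on $A$ with $\|\psi_A\|_2 = 1$, extended by zero, and set $c := \langle \psi_A, \varphi_1\rangle \in (0,1)$ (strict Cauchy--Schwarz since $\psi_A \not\equiv \varphi_1$) and $\delta := \int_{\Omega \setminus A} \varphi_1^2 > 0$. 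Take $\gamma \in \Gamma$ obtained (after reversal and reparametrization on $[-1,1]$) from the concatenation
\[
\gamma_1(s) = \frac{(1-s)\varphi_1 - s\psi_A}{\|(1-s)\varphi_1 - s\psi_A\|_2}, \qquad \gamma_2(s) = -\frac{(1-s)\psi_A + s\varphi_1}{\|(1-s)\psi_A + s\varphi_1\|_2}, \qquad s \in [0,1],
\]
so that $\gamma$ traverses $\varphi_1 \to -\psi_A \to -\varphi_1$. Both denominators stay strictly positive because $\psi_A$ is not a multiple of $\varphi_1$ ($\psi_A$ vanishes on $B_\eta$, whereas $\varphi_1$ does not).

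Since $\varphi_1$ satisfies $\mathcal{E}_L(\varphi_1, v) = \lambda_1^L \int \varphi_1 v$ for all $v \in \mathbb{H}(\Omega)$, one has $\mathcal{E}_L(\varphi_1, \psi_A) = \lambda_1^L c$, and a bilinear expansion yields along the non-positive leg $\gamma_2$
\[
\tilde E_r(\gamma_2(s)) = \mathcal{E}_L(\gamma_2(s), \gamma_2(s)) = \lambda_1^L + \frac{(1-s)^2 M}{(1-s)^2 + s^2 + 2(1-s)sc} \leq \lambda_1^L + M.
\]
Along $\gamma_1$, writing $a = 1-s,\ b = s,\ f_s = a\varphi_1 - b\psi_A$, the same expansion gives $\mathcal{E}_L(f_s, f_s)/\|f_s\|_2^2 = \lambda_1^L + b^2 M/(a^2 + b^2 - 2abc)$, while $\psi_A \equiv 0$ on $\Omega \setminus A$ implies $f_s = a\varphi_1 > 0$ there, whence $\|f_s^+\|_2^2 \geq a^2 \delta$ and
\[
\tilde E_r(\gamma_1(s)) \leq \lambda_1^L + \frac{b^2 M - r a^2 \delta}{a^2 + b^2 - 2abc}.
\]
If $r a^2 \delta \geq b^2 M$ the numerator is $\leq 0$; otherwise $a/b < \sqrt{M/(r\delta)}$, and for $r \geq 16 M/\delta$ this forces $a^2 + b^2 - 2abc \geq b^2/2$, hence $\tilde E_r(\gamma_1(s)) \leq \lambda_1^L + 2M \leq \lambda_1^L + 2\epsilon$. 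Combining both legs yields $c(r) \leq \lambda_1^L + 2\epsilon$ for all $r \geq 16 M/\delta$, and sending $\epsilon \to 0$ completes the proof.

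The principal technical obstacle is the tail regime on $\gamma_1$ where $a \ll b$: the positive part of $f_s$ is carried only by the shrinking set $\Omega \setminus A = \overline{B_\eta}$, so closing the balance between $b^2 M$ and $r a^2 \delta$ requires both the pointwise lower bound $\|f_s^+\|_2^2 \geq a^2 \delta$ and the sharpened denominator estimate $\|f_s\|_2^2 \geq b^2/2$ (valid only once $r \geq 16M/\delta$). This dictates that one must take $r$ large relative to $M/\delta$ before shrinking $A$ toward $\Omega$, an order of limits that is characteristic of this asymptotic result.
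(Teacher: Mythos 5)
Your argument is correct in its core strategy but takes a genuinely different route from the paper's. The paper proceeds by contradiction: assuming $c(r)\geq\lambda_1^L+\delta$ for all $r$, it fixes a single (unbounded) function $\varphi\in\mathbb{H}(\Omega)$ with no $\rho$ satisfying $\varphi\leq\rho\varphi_1$, forms the one-parameter path $\gamma(t)=\tfrac{t\varphi_1+(1-|t|)\varphi}{\|t\varphi_1+(1-|t|)\varphi\|_2}$, and then tracks the maximizer $t_r$: since $r\|\gamma(t_r)^+\|_2^2$ must stay bounded, $\|\gamma(t_r)^+\|_2\to0$, which forces $t_r\to-1$, and passing to the limit in the defining inequality yields $\lambda_1^L\geq\lambda_1^L+\delta$. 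Your proof is constructive: you build, for each $\epsilon>0$, an explicit path through a sub-eigenfunction $\psi_A$ of a slightly smaller domain and read off $c(r)\leq\lambda_1^L+2\epsilon$ for $r$ large. This buys a quantitative rate ($r\geq 16M/\delta$) and avoids compactness, at the price of an additional domain-approximation step. Your bilinear computations along $\gamma_1,\gamma_2$ are correct, including the lower bound $\|f_s^+\|_2^2\geq a^2\delta$ and the denominator estimate in the tail regime.

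There is, however, a self-contradiction in how you introduce $A$: you write ``an open set $A\subset\subset\Omega$ of the form $A=\Omega\setminus\overline{B_\eta}$,'' but $\Omega\setminus\overline{B_\eta}$ is never compactly contained in $\Omega$ (its closure meets $\partial\Omega$). The density-based Rayleigh argument you invoke naturally produces a \emph{compactly contained} $A$: approximate $\varphi_1$ by $u_\eps\in C_c^\infty(\Omega)$ in $\mathbb{H}(\Omega)$, let $A$ be any open set with $\operatorname{supp}u_\eps\subset A\subset\subset\Omega$, and then $\lambda_1^L(A)\leq\mathcal{E}_L(u_\eps,u_\eps)/\|u_\eps\|_2^2\leq\lambda_1^L+\eps'$. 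Such an $A$ also has $|\Omega\setminus A|>0$, so your subsequent use of $\delta:=\int_{\Omega\setminus A}\varphi_1^2>0$ goes through unchanged; just drop the ``$\Omega\setminus\overline{B_\eta}$'' and work with $A\subset\subset\Omega$. (If you insisted on $A=\Omega\setminus\overline{B_\eta}$, proving $\lambda_1^L(A)\to\lambda_1^L$ as $\eta\to0$ would need a capacity argument for the logarithmic Laplacian, which is not provided and is not the same as the density statement.) You should also state explicitly that you are using the density of $C_c^\infty(\Omega)$ in $\mathbb{H}(\Omega)$ and that $\psi_A$ can be taken non-negative, so that $c=\langle\psi_A,\varphi_1\rangle>0$ and the non-positive leg $\gamma_2$ indeed has $\gamma_2(s)^+\equiv0$; with those points in place the proof is sound.
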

\begin{proof}
    Assume by contradiction that there exists a $\delta>0$ such that $\max_{u \in \gamma[-1,1]} \tilde{E}_r(u) \geq \lambda_1^L + \delta$ for all $\gamma \in \Gamma$ and all $s \geq 0.$ Let $\varphi \in \mathbb H(\Omega)$ which is unbounded from above in the neighborhood of some $x_1 \in \Omega$ such that there does not exist a $r \in \mathbb{R}$ such that $\varphi \leq r \varphi_1$ a.e. in $\Omega$ and consider the path $\gamma \in \Gamma$ defined by
    \[
    \gamma(t) = \frac{t \varphi_1 + (1-|t|)\varphi}{\|t \varphi_1 + (1-|t|)\varphi\|_2}, \quad t \in [-1,1].
    \]
    The maximum of $\tilde{E}_r$ on $\gamma[-1,1]$ is achieved at say $t_r.$ Putting $v_{t_r} = t_r \varphi_1 + (1-|t_r|) \varphi$, we thus have 
    \begin{equation}\label{est-limit-1}
        \mathcal{E}_L(v_{t_r}, v_{t_r}) - r \int_\Omega (v_{t_r}^+)^2 ~dx \geq (\lambda_1^L + \delta) \int_{\Omega} |t_r \varphi_1 + (1-|t_r|) \varphi|^2 ~dx \quad \text{for all} \ r \geq 0.
    \end{equation}
    Letting $r \to + \infty$, we can assume, for a subsequence, $t_r \to t_0 \in [-1,1].$ Since $v_{t_r}$ remains bounded in $\mathbb{H}(\Omega) \cap L^2(\Omega)$ as $r \to +\infty$, it follows from \eqref{est-limit-1} that $\int_\Omega (v_{t_r}^+)^2 ~dx \to 0.$ Consequently, by compact embedding of $\mathbb{H}(\Omega)$ in $L^2(\Omega)$, we obtain
    \[
    \int_{\Omega} ((t_0 \varphi_1 + (1-|t_0|) \varphi)^+)^2 ~dx=0,
    \]
    which is impossible by the choice of $\varphi$ unless $t_0 = -1.$ So, $t_r \to -1.$ Finally, by passing limits in \eqref{est-limit-1}, we arrive at
    \[
    \lambda_1^L \int_{\Omega} |\varphi_1|^2 ~dx = \mathcal{E}_L(\varphi_1, \varphi_1) \geq (\lambda_1^L + \delta) \int_{\Omega} |\varphi_1|^2 ~dx
    \]
    which is a contradiction.
\end{proof}
\section{Nonresonance between $(\lambda_1^L, \lambda_1^L)$ and $\mathcal{C}$}\label{section:nonresonance}
In this section, we study the following problem
\begin{equation}\label{Non-resonance-prob}
    \left\{\begin{aligned}
   L_\Delta u\: &= f(x,u) &&~~\text{in} ~~ \Omega, \\
      u&=0 &&~~\text{in} ~~\mathbb R^N\setminus \Omega,
    \end{aligned} \right.\tag{$P_\Delta$}
\end{equation}
where $\frac{f(x,u)}{u}$ asymptotically lies between $(\lambda_1^L, \lambda_1^L)$ and $(\alpha, \beta) \in \mathcal{C}.$ Let $f: \Omega \times \mathbb{R} \to \mathbb{R}$ be a Caratheodory function. Given a point $(\alpha, \beta) \in \mathcal{C}$, we assume the following
\begin{equation}\label{assumpt-f-1}
    \gamma_{\pm}(x) \leq \liminf\limits_{s \to \pm \infty} \frac{f(x,s)}{s} \leq \limsup\limits_{s \to \pm \infty} \frac{f(x,s)}{s} \leq \Gamma_{\pm}(x)
\end{equation}
hold uniformly with respect to $x$, where $\gamma_{\pm}$ and $\Gamma_{\pm}$ are bounded functions which satisfy 
\begin{equation}\label{assumpt-f-2}
\begin{cases}
    \lambda_1^L \leq \gamma_+(x) < \Gamma_+(x) \leq \alpha \ \text{a.e. in} \ \Omega& \\
    \lambda_1^L \leq \gamma_-(x) < \Gamma_-(x) \leq \beta \ \text{a.e. in} \ \Omega. &
    \end{cases}
\end{equation}
We also assume the following 
\begin{equation}\label{assumpt-f-3}
    \delta_{\pm}(x) \leq \liminf\limits_{s \to \pm \infty} \frac{2F(x,s)}{s^2} \leq \limsup\limits_{s \to \pm \infty} \frac{2F(x,s)}{s^2} \leq \Delta_{\pm}(x), \quad \text{where} \quad F(x,s) = \int_0^s f(x,t) ~dt
\end{equation}
hold uniformly with respect to $x$, where  $\delta_{\pm}(x)$ and $\Delta_{\pm}(x)$ are bounded functions which satisfy
\begin{equation}\label{assumpt-f-4}
    \begin{cases}
    \lambda_1^L \leq \delta_{+}(x) \leq \Delta_{+}(x) \leq \alpha \ \text{a.e. in} \ \Omega&\\
    \lambda_1^L \leq \delta_{-}(x) \leq \Delta_{-}(x) \leq \beta \ \text{a.e. in} \ \Omega&\\
    \lambda_1^L < \delta_+ \ \text{and} \ \lambda_1^L < \delta_- \ \text{on subsets of positive measure}&\\
    \text{either} \ \Delta_{+}(x) < \alpha \ \text{a.e. in} \ \Omega \ \text{or} \ \Delta_-(x) < \beta \ \text{a.e. in} \ \Omega.
    \end{cases}
\end{equation}
Define the energy functional $\Psi : \mathbb{H}(\Omega) \to \mathbb{R}$ as
\[
\Psi(u) = \frac{\mathcal{E}_L(u,u)}{2} - \int_\Omega F(x,u) ~dx.
\]
Then, $\Psi$ is a $C^1$ functional on $\mathbb{H}(\Omega)$ and 
\[
\left\langle \Psi'(u), v \right\rangle = \mathcal{E}_L(u,v) - \int_{\Omega} f(x,u) v ~dx
\]
and the critical points of $\Psi$ are exactly the weak solution of \eqref{Non-resonance-prob}.
\begin{lemma}\label{PS-condn-reso}
    The functional $\Psi$ satisfies the $(PS)$ condition in $\mathbb{H}(\Omega).$
\end{lemma}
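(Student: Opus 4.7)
The plan is to follow the classical two-step strategy. First, I would establish that the $(PS)$ sequence $\{u_n\}$ is bounded in $\mathbb{H}(\Omega)$; this is the main step. Second, by the compact embedding \eqref{compact-embed} and reflexivity, a subsequence satisfies $u_n \rightharpoonup u$ in $\mathbb{H}(\Omega)$ and $u_n \to u$ in $L^2(\Omega)$. Testing $\Psi'(u_n) \to 0$ against $u_n - u$ and using the essential boundedness of $f(x,s)/s$ enforced by \eqref{assumpt-f-1} gives $\mathcal{E}_L(u_n, u_n - u) \to 0$, whereupon the $(S)$-property of $L_\Delta$ from \cite[Lemma~3.3]{Arora-Hajaiej-Perera-2026} yields $u_n \to u$ in $\mathbb{H}(\Omega)$, exactly as in Lemma \ref{PS-cond-critical}.

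Boundedness is proved by contradiction. Assuming $\|u_n\| \to \infty$, I would normalize $v_n := u_n/\|u_n\|$, extract (by reflexivity and \eqref{compact-embed}) a subsequence with $v_n \rightharpoonup v$ in $\mathbb{H}(\Omega)$ and $v_n \to v$ in $L^2(\Omega)$ and a.e., and pass to the weak limit in $\langle \Psi'(u_n), \varphi\rangle/\|u_n\| \to 0$. Since $f(x,u_n)/u_n$ is uniformly essentially bounded and its subsequential pointwise limit on $\{v>0\}$ (resp.\ $\{v<0\}$) lies in $[\gamma_+, \Gamma_+]$ (resp.\ $[\gamma_-, \Gamma_-]$), extending it by $\Gamma_+$ on $\{v \le 0\}$ and $\Gamma_-$ on $\{v \ge 0\}$, which is harmless for the equation, $v$ is found to weakly solve $L_\Delta v = a_1(x) v^+ - a_2(x) v^-$ with $\lambda_1^L \le a_1 \le \alpha$ and $\lambda_1^L \le a_2 \le \beta$ a.e.\ in $\Omega$.

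Three cases must be excluded. If $v \equiv 0$, testing with $v_n$ yields $\mathcal{E}_L(v_n,v_n) = \int_\Omega (f(x,u_n)/u_n) v_n^2\,dx + o(1) \to 0$, which contradicts the coercivity $\mathcal{E}_L(v_n,v_n) \ge c\|v_n\|^2 - C\|v_n\|_2^2 \to c > 0$ from \cite[Lemma~3.4]{HernandezSaldana2022}. If $v \ge 0$, $v \not\equiv 0$ (the case $v \le 0$ is symmetric), Corollary \ref{pos-first-ef} forces $a_1 \equiv \lambda_1^L$ a.e.\ and $v = k\varphi_1 > 0$ a.e.; comparing the two expressions for $\lim \mathcal{E}_L(v_n,v_n)$, namely $\int_\Omega A v^2\,dx$ from testing $\Psi'(u_n)/\|u_n\|$ with $v_n$ (with $A \equiv \lambda_1^L$ on $\{v>0\}=\Omega$) and $\int_\Omega B v^2\,dx$ from $2\Psi(u_n)/\|u_n\|^2 \to 0$ (with $B \ge \delta_+$), and invoking $\delta_+ > \lambda_1^L$ on a set of positive measure from \eqref{assumpt-f-4}, I would obtain $\lambda_1^L \|v\|_2^2 = \int_\Omega B v^2\,dx > \lambda_1^L \|v\|_2^2$, the desired contradiction.

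The main obstacle is the sign-changing case, which requires both Lemma \ref{lem-perturbed-fucik} and the one-sided strict condition from \eqref{assumpt-f-4}. Since \eqref{assumpt-f-2} gives $\Gamma_\pm > \gamma_\pm \ge \lambda_1^L$ strictly a.e., and $\{v<0\}, \{v>0\}$ both have positive measure, the extended $a_1, a_2$ satisfy \eqref{pertur-cond-1} and \eqref{pertur-cond-2}, so Lemma \ref{lem-perturbed-fucik} forces $a_1 = \alpha$ a.e.\ on $\{v>0\}$ and $a_2 = \beta$ a.e.\ on $\{v<0\}$. Consequently $\mathcal{E}_L(v,v) = \alpha\|v^+\|_2^2 + \beta\|v^-\|_2^2$. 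On the other hand, passing to the limit in $2\Psi(u_n)/\|u_n\|^2 \to 0$ and using the weak lower semicontinuity of $\mathcal{E}_L$ together with the pointwise bounds $B \le \Delta_+$ on $\{v>0\}$ and $B \le \Delta_-$ on $\{v<0\}$ yields $\mathcal{E}_L(v,v) \le \int_\Omega \Delta_+ (v^+)^2\,dx + \int_\Omega \Delta_- (v^-)^2\,dx$; the strict one-sided inequality $\Delta_+ < \alpha$ a.e.\ from \eqref{assumpt-f-4} (WLOG) makes this bound strictly less than $\alpha\|v^+\|_2^2 + \beta\|v^-\|_2^2$, contradicting the previous identity. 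The delicate technical ingredient throughout is to promote the pointwise subsequential asymptotics of $f(x,u_n)/u_n$ and $2F(x,u_n)/u_n^2$ on $\{v \ne 0\}$ to integral convergence of $\int_\Omega (f/u_n) v_n^2\,dx$ and $\int_\Omega (2F/u_n^2) v_n^2\,dx$, which I would handle by a Vitali-type argument exploiting the strong $L^2$ convergence $v_n \to v$ and the uniform essential boundedness of these ratios.
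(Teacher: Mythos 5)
Your proposal is correct and follows essentially the same strategy as the paper: normalize the (allegedly unbounded) $(PS)$ sequence, pass to a weak limit $v$, identify the limit equation as a weighted Fu\v{c}\'{i}k problem $L_\Delta v = a_1(x)v^+ - a_2(x)v^-$ with $a_1, a_2$ sandwiched as in \eqref{pertur-cond-1}, and derive a contradiction from Lemma~\ref{lem-perturbed-fucik} together with the one-sided strictness in \eqref{assumpt-f-4}. The one tactical difference worth noting: the paper rules out $v\equiv 0$ already at the normalization stage, by testing $\Psi'(u_k)/\|u_k\|$ against $v_k - v_0$ and invoking the $(S)$-property to promote the weak convergence of $v_k$ to strong convergence in $\mathbb{H}(\Omega)$, whence $\|v_0\|=1$; this also makes the later passages to limit in the energy cleaner, since $\mathcal{E}_L(v_k,v_k)\to\mathcal{E}_L(v_0,v_0)$ holds with equality rather than only by weak lower semicontinuity. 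Your alternative of excluding $v\equiv 0$ via a G\aa rding-type coercivity estimate, and then relying on lower semicontinuity in the sign-changing case, also works but is slightly more circuitous. Finally, a small citation mismatch: in the one-signed case the statement you need is that a nonnegative nontrivial solution of $L_\Delta v = a_1(x)v$ with $a_1 \ge \lambda_1^L$ forces $a_1\equiv\lambda_1^L$ a.e.; this is precisely the contrapositive of Theorem~\ref{thm-sign-changing} (the weighted version), not of Corollary~\ref{pos-first-ef}, which is stated only for constant weight~$\lambda$. The paper itself implicitly handles this via the adjustment of $a_1, a_2$ on the irrelevant sets and the trichotomy stemming from Lemma~\ref{lem-perturbed-fucik}.
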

\begin{proof}
    Let $\{u_k\}$ be a Palais Smale sequence (in short (PS) sequence) 
    \begin{equation}\label{PS-cond}
       |\Psi(u_k)| \leq c, \quad \text{and} \ (\Psi'(u_k), \phi ) \leq \eps_k \|\phi\|  
    \end{equation}
where $c>0$ and $\eps_k \to 0$ as $k \to \infty.$ It is enough to show that the Palais Smale sequence is bounded. Assume by contradiction that $\{u_k\}$ is not a bounded sequence. Define $v_k = \frac{u_k}{\|u_k\|}$, a bounded sequence in $\mathbb{H}(\Omega).$ Then, there exists a bounded subsequence of $\{v_k\}$ (denoted by same notation) and a $v_0 \in \mathbb{H}(\Omega)$ such that $v_k \rightharpoonup v_0$ weakly in $\mathbb{H}(\Omega)$ and $v_k \to v_0$ in $L^2(\Omega)$ and $v_k \to v_0$ a.e. in $\Omega.$ Now, by using \eqref{assumpt-f-1}-\eqref{assumpt-f-2}, we have $\frac{f(x,u_k)}{\|u_k\|} \rightharpoonup f_0(x)$ in $L^2(\Omega).$ By taking $\phi= v_k - v_0$, dividing by $\|u_k\|$ and using the $(S)$ property of the operator $L_{\Delta}$ (see, \cite[Lemma 3.3]{Arora-Hajaiej-Perera-2026}), we obtain $v_k \to v_0$ in $\mathbb{H}(\Omega).$ In particular, $\|v_0\| =1$ and $v_0 \not \equiv 0.$ This further gives in view of \eqref{PS-cond}
\[
\mathcal{E}_L(v_0, \phi) - \into f_0(x) \phi ~dx = 0 \quad \forall \ \phi \in \mathbb{H}(\Omega).
\]
Now, by standard arguments based on assumption \eqref{assumpt-f-1}, $f_0(x) = a_1(x) v_0^+ - a_2(x) v_0^-$ for some bounded functions $a_1$ and $a_2$ satisfying \eqref{pertur-cond-1}. In the expression of $f_0(x)$, the value of $a_1(x)$ (respectively $a_2(x)$) on $\{x : v_0(x) \leq 0\}$ (respectively $\{x : v_0(x) \geq 0\}$) are irrelevant, and consequently we can assume that
\begin{equation}\label{PS-est-1}
    a_1(x) > \lambda_1^L \quad \text{on} \ \{x: v_0(x) \leq 0\} \quad \text{and} \quad a_2(x) > \lambda_1^L \quad \text{on} \ \{x: v_0(x) \geq 0\}. 
\end{equation}
It then follows from Lemma \ref{lem-perturbed-fucik} that either $(i): a_1(x)=\lambda_1^L$ a.e. in $\Omega$, or $(ii):  a_2(x)=\lambda_1^L$ a.e. in $\Omega$, or $(iii): v_0$
is an eigenfunction associated to the point $(\alpha, \beta)$ of $\mathcal{C}$. We will see that each case leads to a contradiction. If $(i)$ holds then by \eqref{PS-est-1}, $v_0>0$ a.e. in $\Omega$ and \eqref{pertubed-prob} implies
\begin{equation}
    \mathcal{E}_L(v_0, v_0) = \lambda_1^L \into v_0^2 ~dx.
\end{equation}
Now, dividing \eqref{PS-cond} by $\|u_k\|^2$, taking limits as $k \to \infty$ and using \eqref{assumpt-f-3}, we obtain
\[
\lambda_1^L \into v_0^2 ~dx  = \mathcal{E}_L(v_0, v_0) = \lim_{k \to \infty} \into \frac{2 F(x, u_k)}{\|u_k\|^2} ~dx \geq \into \delta_+(x) v_0^2 ~dx
\]
which is a contradiction to \eqref{assumpt-f-4}. The case $(ii)$ can be treated similarly. Now, if $(iii)$ holds, we deduce from \eqref{assumpt-f-3} that
\[
\into \left(\alpha (v_0^+)^2 + \beta (v_0^-)^2\right) ~dx = \mathcal{E}_L(v_0, v_0) = \lim_{k \to \infty} \into \frac{2 F(x, u_k)}{\|u_k\|^2} ~dx \leq \into \Delta_+(x) v_0^2 + \Delta_-(x) v_0^2 ~dx
\]
which again contradicts the assumption \eqref{assumpt-f-4}, since $v_0$ changes sign in $\Omega$ by Lemma \eqref{lem-perturbed-fucik}. Hence, $\{u_k\}$ is a bounded sequence in $\mathbb{H}(\Omega).$ 
\end{proof}
Next, we study the mountain pass geometry of the energy functional $\Psi.$
\begin{lemma}\label{lem:mpg}
    There exists a $R>0$ such that
    \begin{equation}\label{geometry}
        \max\{\Psi(R \varphi_1), \Psi(-R \varphi_1)\} \leq \max_{u \in \gamma[-1,1]} \Psi(u)
    \end{equation}
    for any $\gamma \in \Gamma_R := \{\gamma \in C([-1,1], \mathbb{H}(\Omega)): \gamma(1) = R \varphi_1, \gamma(-1) = -R \varphi_1\}.$
\end{lemma}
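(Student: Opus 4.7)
The plan is to establish the mountain-pass geometry for $\Psi$ by showing that $\Psi(\pm R\varphi_1) \to -\infty$ as $R \to +\infty$, while $\max_{u \in \gamma[-1,1]} \Psi(u)$ remains bounded below uniformly in $\gamma \in \Gamma_R$. First, I derive from \eqref{assumpt-f-3}--\eqref{assumpt-f-4} the pointwise lower bound: for every $\eps > 0$ there exists $C_\eps > 0$ such that
\[
2F(x,s) \geq (\delta_+(x) - \eps)\, s_+^2 + (\delta_-(x) - \eps)\, s_-^2 - C_\eps \quad \text{for a.e. } x \in \Omega,\ s \in \R.
\]
Evaluating $\Psi$ at $\pm R \varphi_1$ and using $\mathcal{E}_L(\varphi_1, \varphi_1) = \lambda_1^L$ with $\|\varphi_1\|_2 = 1$ yields
\[
\Psi(\pm R \varphi_1) \leq \frac{R^2}{2}\Bigl(\lambda_1^L - \int_\Omega \delta_{\pm}(x)\, \varphi_1^2\, dx + \eps\Bigr) + \tfrac{1}{2} C_\eps |\Omega|.
\]
Since $\delta_{\pm} \geq \lambda_1^L$ with strict inequality on a subset of positive measure and $\varphi_1 > 0$ in $\Omega$, one has $\int_\Omega \delta_{\pm} \varphi_1^2 \, dx > \lambda_1^L$. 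Choosing $\eps$ smaller than this gap makes the $R^2$--coefficient strictly negative, so $\Psi(\pm R \varphi_1) \to -\infty$.

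Next, for any $\gamma \in \Gamma_R$, after a small generic perturbation preserving the endpoints (used if $\gamma$ passes through $0$), define the normalized path $\tilde\gamma(t) := \gamma(t)/\|\gamma(t)\|_2$, which is a continuous path on $\mathcal{P}$ from $-\varphi_1$ to $\varphi_1$. By the variational characterization of the Fu\v{c}\'{\i}k level in Theorem \ref{mainthrm-1}, there exists $t^* \in [-1,1]$ with $\tilde{E}_r(u^*) \geq c(r) = \beta$, where $u^* := \tilde\gamma(t^*)$ and $s^* := \|\gamma(t^*)\|_2$; equivalently, $\mathcal{E}_L(u^*,u^*) \geq \alpha \|u^*_+\|_2^2 + \beta \|u^*_-\|_2^2$. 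Combining the upper-growth consequence $2F(x,s) \leq (\Delta_+(x) + \eps')\, s_+^2 + (\Delta_-(x) + \eps')\, s_-^2 + C'_{\eps'}$ with the above and simplifying leads to
\[
\Psi(s^* u^*) \geq \frac{(s^*)^2}{2}\Bigl[\int_\Omega (\alpha - \Delta_+)(u^*_+)^2 + \int_\Omega (\beta - \Delta_-)(u^*_-)^2 - \eps'\Bigr] - \tfrac{1}{2} C'_{\eps'} |\Omega|.
\]
The strict inequality $\Delta_+ < \alpha$ or $\Delta_- < \beta$ a.e.\ from \eqref{assumpt-f-4}, together with $u^* \neq \pm \varphi_1$ (forced by $\tilde E_r(u^*) \geq c(r) > \lambda_1^L$), makes the bracket strictly positive for small $\eps'$. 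Hence $\Psi(s^* u^*) \geq -\tfrac{1}{2} C'_{\eps'} |\Omega|$ uniformly in $\gamma \in \Gamma_R$; since $s^* u^* \in \gamma[-1,1]$, this gives the desired uniform-in-$R$ lower bound for $\max_{u \in \gamma[-1,1]} \Psi(u)$.

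Combining the two steps, for $R$ large enough, $\Psi(\pm R \varphi_1)$ becomes more negative than the uniform lower bound from the second step, yielding the claimed inequality. The main obstacle I foresee is ensuring the uniform positivity of the bracket $\int_\Omega (\alpha - \Delta_+)(u^*_+)^2 + \int_\Omega (\beta - \Delta_-)(u^*_-)^2$ as $u^*$ ranges over the maximizers coming from all paths: if along a sequence of paths one of $\|u^*_+\|_2$ or $\|u^*_-\|_2$ could tend to $0$ while the available gap from \eqref{assumpt-f-4} sits on the wrong sign-component, the lower bound could degenerate. This will be handled by combining the weak compactness of $\mathcal{P}$ in $L^2(\Omega)$, the strict separation $c(r) > \lambda_1^L$ (which excludes weak limits equal to $\pm \varphi_1$ via Corollary \ref{pos-first-ef}), and Theorem \ref{thm-sign-changing} applied in a Fu\v{c}\'{\i}k-type version (Lemma \ref{lem-perturbed-fucik}) to rule out purely one-signed limits; this forces both $\|u^*_+\|_2$ and $\|u^*_-\|_2$ to be bounded below uniformly, whichever strict inequality in \eqref{assumpt-f-4} is active.
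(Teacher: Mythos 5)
Your overall architecture is right and mirrors the paper's: split the argument into (a) showing $\Psi(\pm R\varphi_1)\to-\infty$ using the lower growth of $F$, and (b) producing an $R$-independent lower bound on $\max_{u\in\gamma[-1,1]}\Psi(u)$ by normalizing any $\gamma\in\Gamma_R$ to a path in $\Gamma$ and invoking the minimax characterization $c(r)=\beta$. Step (a) and the first estimate are fine.

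The gap is exactly where you suspected it, and your proposed repair does not close it. The quantity you call the ``bracket,'' namely $\int_\Omega(\alpha-\Delta_+)(u^*_+)^2 + \int_\Omega(\beta-\Delta_-)(u^*_-)^2$, is only nonnegative for an arbitrary maximizer $u^*$ of $\tilde E_r$ along a normalized path; it genuinely \emph{can} vanish. If, say, $\Delta_+\equiv\alpha$ a.e.\ (which \eqref{assumpt-f-4} permits as long as $\Delta_-<\beta$ a.e.), the bracket collapses to $\int_\Omega(\beta-\Delta_-)(u^*_-)^2$, which is zero whenever $u^*\geq0$ a.e.\ Nothing prevents the maximizer $u^*$ along a given path from being nonnegative: the constraint $\tilde E_r(u^*)\geq c(r)$ is satisfied by plenty of sign-definite functions with large $\mathcal{E}_L$-energy, and the constraint set $\{u\in\mathcal P:\tilde E_r(u)\geq c(r)\}$ is unbounded in $\h(\Omega)$ and has no useful sequential compactness. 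Your invocation of Lemma~\ref{lem-perturbed-fucik} to ``rule out purely one-signed limits'' is misplaced here: that lemma applies to \emph{solutions} of the Fu\v{c}\'ik-type PDE, and the path maximizer $u^*$ is not a solution of anything, merely a point on a minimax path.

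The paper's proof circumvents this cleanly by introducing the auxiliary $2$-homogeneous functional $J(u)=\mathcal{E}_L(u,u)-\int_\Omega \Delta_+(u^+)^2-\int_\Omega\Delta_-(u^-)^2$ and showing that $d:=\inf_{\gamma\in\Gamma}\max_{u\in\gamma}J(u)>0$. The nonnegativity $d\geq0$ follows exactly from the observation you made (bracket $\geq0$), but strict positivity is \emph{not} obtained pointwise from the bracket; instead, $\tilde J$ satisfies the Palais--Smale condition on $\mathcal P$ and $J(\pm\varphi_1)<0$, so by the mountain pass theorem $d$ is a critical value. If $d=0$, the corresponding critical point would be a nontrivial solution of $L_\Delta u=\Delta_+u^+-\Delta_-u^-$, which Corollary~\ref{coro-perturbed-fucik} forbids under \eqref{assumpt-f-4}. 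This gives $d>0$ in a way that does not require any pointwise lower bound on the bracket. Then the upper growth bound is used in the homogeneous form $\Psi(u)\geq\frac{J(u)-\zeta\|u\|_2^2}{2}-\|a_\zeta\|_1$, which combined with $\max_{v\in\tilde\gamma}J(v)\geq d$ for the normalized path $\tilde\gamma$ (and choosing $\zeta<d$) yields $\max_{u\in\gamma}\Psi(u)>-\|a_\zeta\|_1\geq\Psi(\pm R\varphi_1)$ for $R$ large. In short: you need the mountain-pass critical value $d$ of the \emph{auxiliary} functional, not the value of the bracket at a maximizer, to get a uniform strictly positive quantity; the former is tied to a PDE solution and can be protected by the nonresonance hypotheses, while the latter cannot.
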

\begin{proof}
In view of \eqref{pertubed-prob}, first we consider the following functional associated to the functions $\Delta_{\pm}$ given by
\[
J(u) = \mathcal{E}_L(u,u) - \into \Delta_+(x) u^2 ~dx - \into \Delta_-(x) u^2 ~dx 
\]
and claim that 
\begin{equation}\label{claim-1}
    d:= \inf_{\gamma \in \Gamma} \max_{u \in \gamma[-1,1]} J(u) > 0
\end{equation}
where $\Gamma$ is defined in \eqref{def:path-set}. Denote $r = \alpha-\beta \geq 0$. Since $(\alpha, \beta) \in \mathcal{C}$, we have for any $\gamma \in \Gamma$, 
\[
\max_{u \in \gamma[-1,1]} \tilde{{E}}_r(u) \geq c(r) = \beta \quad \Longrightarrow \quad \max_{u \in \gamma[-1,1]} \left(\mathcal{E}_L(u,u) - \alpha \into (u^+)^2 ~dx - \beta \into (u^-)^2 ~dx \right) \geq 0.
\] 
which further implies $\max\limits_{u \in \gamma[-1,1]} J(u) > 0$ due to \eqref{assumpt-f-4}. Therefore, $d \geq 0.$ On the other hand, since $\delta_{\pm}(x) \leq \Delta_{\pm}(x)$ a.e. in $\Omega$, we have
\[
J(\pm\varphi) \leq \into (\lambda_1^L - \delta_{\pm}(x)) \varphi^2 ~dx < 0. 
\]
Thus, we have a mountain pass geometry for the restriction $\tilde{J}$ of $J$ to $\mathcal{P}$,
\[
\max\{\tilde{J}(\varphi_1), \tilde{J}(-\varphi_1)\} < 0 \leq \max_{u \in \gamma[-1,1]} \tilde{J}(u)
\]
for any path $\gamma \in \Gamma$ and one verifies as in Lemma \ref{PS-cond-critical} that $J$ satisfies the $(PS)$ condition on $\mathcal{P}$. Then, by using the mountain pass theorem as in Proposition \ref{mountain-third-critical-point}, we obtain $d$ is the critical value of $\tilde{J}$, {\it i.e.} there exists a $u \in \mathcal{P}$ and $\mu \in \mathbb{R}$ such that
\[
J(u) = d \quad \text{and} \quad (J'(u), \phi) = \mu (I'(u), \phi) \ \text{for all} \ \phi \in \mathbb{H}(\Omega). 
\]
Assume by contradiction that $d=0.$ Taking $\phi =u$ above, one deduces that $\mu=0$, so that $u$ is a nontrivial solution of 
\[
L_\Delta u = \Delta_+(x) u^+ - \Delta_-(x) u^- \ \text{in} \ \Omega \quad \text{and} \quad u=0 \ \text{in} \ \mathbb{R}^N \setminus \Omega.
\]
Now, by using \eqref{assumpt-f-4} and Lemma \ref{lem-perturbed-fucik}, we get a contradiction and hence the claim in \eqref{claim-1}. By \eqref{assumpt-f-3} and for any $\zeta>0$ there exists $a_\zeta \in L^1(\Omega)$ such that for a.e. $x$,
    \begin{equation}\label{perturbed-nonl}
        \begin{cases}
             (\delta_+(x) -\zeta) \frac{s^2}{2} - a_\zeta(x) \leq F(x,s) \leq (\Delta_+(x) + \zeta) \frac{s^2}{2} + a_\zeta(x), & \quad s>0\\
             (\delta_-(x) -\zeta) \frac{s^2}{p} - a_\zeta(x) \leq F(x,s) \leq (\Delta_-(x) + \zeta) \frac{s^2}{2} + a_\zeta(x), & \quad s<0.
        \end{cases}
    \end{equation}
    By the left inequalities in \eqref{perturbed-nonl} it follows that for any $R>0$ and $\zeta>0$,
\[
\Psi(\pm R \varphi_1) \leq \frac{R^2}{2} \into (\lambda_1^L-\delta_{\pm}(x)) \varphi_1^2 ~dx + \frac{\zeta R^2}{2} + \|a_\zeta\|_1  
\]
which further implies, by using \eqref{assumpt-f-4} and choosing $\zeta$ sufficiently small, that $\Psi(\pm R \varphi_1) \to -\infty$ as $R \to \infty.$ Fix $\zeta$ such that $0< \zeta < d$ and choose $R= R(\zeta)$ such that 
\begin{equation}\label{upperbound-mpg}
    \Psi(\pm R \varphi_1) \leq - \|a_\zeta\|_1
\end{equation}
where $a_\zeta$ is associated to $\zeta$ through \eqref{perturbed-nonl}. Now, let us consider a path in $\gamma \in \Gamma_R$. If $0 \in \gamma[-1,1]$, then the claim in \eqref{geometry} follows from \eqref{upperbound-mpg} and using $\Psi(0)=0$. On the other hand, if $0 \not \in \gamma[-1,1]$, then then we can consider the normalized path $\tilde{\gamma}(t)= \frac{\gamma(t)}{\|\gamma(t)\|}$, which belongs to $\Gamma.$ Since, by \eqref{perturbed-nonl}, 
\[
\Psi(u) \geq \frac{J(u) - \zeta \|u\|_2^2}{2} - \|a_\zeta\|_1
\]
we obtain
\[
\max_{u \in \gamma[-1,1]} \frac{2 \Psi(u) + 2 \|a_\zeta\|_1 + \zeta \|u\|_2^2}{\|u\|_2^2}  \geq \max_{v \in \tilde{\gamma}[-1,1]} J(v) \geq d
\]
and consequently, by the choice of $\zeta$ and \eqref{upperbound-mpg},
\[
\max_{u \in \gamma[-1,1]} \frac{2 \Psi(u) + 2 \|a_\zeta\|_1}{\|u\|_2^2}  \geq d -\eps >0 \quad \Longrightarrow \quad \max_{u \in \gamma[-1,1]} \Psi(u) > - \|a_\zeta\|_1 \geq \Psi(\pm R \varphi_1). 
\]
Hence, the required claim.
\end{proof}
\begin{theorem}
    Let \eqref{assumpt-f-1}-\eqref{assumpt-f-4} hold and $(\alpha, \beta) \in \mathcal{C}.$ Then the problem \eqref{Non-resonance-prob} admits atleast one solution $u$ in $\mathbb{H}(\Omega).$  
\end{theorem}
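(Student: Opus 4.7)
The plan is to apply the Mountain Pass Theorem to the $C^1$ functional $\Psi:\mathbb{H}(\Omega)\to\mathbb{R}$, whose critical points are precisely the weak solutions of \eqref{Non-resonance-prob}. The two analytical ingredients needed for the theorem have already been prepared: Lemma \ref{PS-condn-reso} yields the Palais--Smale condition on the whole space $\mathbb{H}(\Omega)$, while Lemma \ref{lem:mpg} delivers the mountain pass geometry with endpoints $e_1 = R\varphi_1$ and $e_2 = -R\varphi_1$ for a suitably large $R>0$.

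First, I would fix $R>0$ and the corresponding $\zeta\in(0,d)$ as chosen in the proof of Lemma \ref{lem:mpg}, so that $\Psi(\pm R\varphi_1)\leq -\|a_\zeta\|_1$. Setting
\[
c := \inf_{\gamma\in\Gamma_R}\,\max_{u\in\gamma[-1,1]}\Psi(u),
\]
the quantitative estimate extracted inside the proof of Lemma \ref{lem:mpg} shows that for every $\gamma\in\Gamma_R$, $\max_{u\in\gamma[-1,1]}\Psi(u)>-\|a_\zeta\|_1$, so
\[
c \;\geq\; -\|a_\zeta\|_1 \;\geq\; \max\{\Psi(R\varphi_1),\Psi(-R\varphi_1)\}.
\]
Since the set $\Gamma_R$ is non-empty (for instance the straight segment $\gamma(t)=tR\varphi_1$ belongs to it), the infimum $c$ is finite. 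Combining this separation of levels with the $(PS)$ condition from Lemma \ref{PS-condn-reso}, the classical Mountain Pass Theorem of Ambrosetti--Rabinowitz (in the non-strict form, e.g.\ Ghoussoub--Preiss) produces a critical point $u\in\mathbb{H}(\Omega)$ with $\Psi'(u)=0$ and $\Psi(u)=c$, which by construction is a weak solution of \eqref{Non-resonance-prob}.

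The main obstacle is the non-triviality of the solution, since a priori the mountain pass level $c$ could coincide with $\Psi(0)=0$ and the produced critical point could be $u\equiv 0$. To rule this out, I would distinguish two cases. If $c>0$, then clearly $u\not\equiv 0$ since $\Psi(0)=0$. If $c=0$, the Ghoussoub--Preiss refinement of the mountain pass theorem guarantees the existence of a critical point located on the separating set, and a direct sign analysis using assumption \eqref{assumpt-f-4} together with Lemma \ref{lem-perturbed-fucik} excludes $u\equiv 0$: indeed, the linearization at zero is controlled by the functions $\delta_{\pm},\Delta_{\pm}$, which lie strictly between $\lambda_1^L$ and the curve $\mathcal{C}$ on a subset of positive measure, and this strictness transfers to a strict lower bound on $c$ via the construction of the comparison functional $J$ performed inside Lemma \ref{lem:mpg}. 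In particular, the sign-changing information provided by Theorem \ref{thm-sign-changing} and Lemma \ref{lem-perturbed-fucik} is what prevents the limiting profile from being proportional to $\pm\varphi_1$ or from collapsing to zero, thereby producing the desired nontrivial solution $u\in\mathbb{H}(\Omega)$ of \eqref{Non-resonance-prob}.
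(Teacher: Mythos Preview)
Your approach is essentially the same as the paper's: apply the Mountain Pass Theorem to $\Psi$ using the Palais--Smale condition from Lemma~\ref{PS-condn-reso} and the geometry from Lemma~\ref{lem:mpg}; the paper's own proof is a one-line invocation of exactly these two ingredients. Your second paragraph on non-triviality is unnecessary and somewhat speculative: the theorem as stated only claims existence of \emph{at least one} solution, so once the Mountain Pass Theorem yields a critical point of $\Psi$ you are done, regardless of whether $c=0$ or $u\equiv 0$.
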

\begin{proof}
    The proof follows by the application of Mountain pass theorem and combining the claims in Lemmas \ref{PS-condn-reso} and \ref{lem:mpg}.
\end{proof}
\section*{Acknowledgment} The first author acknowledges financial support from the Anusandhan National Research Foundation (ANRF), India, under Grant No. ANRF/ARGM/2025/000272/MTR. This work was initiated during a research visit to the Department of Mathematics, IIT Jodhpur, and the first author acknowledges the kind hospitality received there in May 2025.

\subsection*{Ethical Approval}
Not applicable.
\subsection*{Competing interests}
The authors declare that they have no competing interests.
\subsection*{Authors' contributions}
The authors contributed equally to this work.
\subsection*{Availability of data and materials}
Data sharing is not applicable to this article as no new data were created or analyzed in this study.

\end{document}